\documentclass[10pt,a4paper, envcountsame]{amsart}
\usepackage[usenames,dvipsnames]{color}

 \usepackage[all]{xy}
\usepackage{amssymb}

%% Use TikZ for diagram
\usepackage{tikz}
\usetikzlibrary{arrows,snakes,positioning,backgrounds,shadows}

\usepackage{enumerate}
\usepackage{enumitem}
\usepackage{multicol}
\usepackage{amssymb,amsmath}
\usepackage{amsthm}
\usepackage{turnstile}
\usepackage{textcomp}
\usepackage{gensymb}
\usepackage{graphicx}
\usepackage{subcaption}
\usepackage{mathtools}
\usepackage{hhline}
\usepackage{array,multirow}
\usepackage{float}
\usepackage{framed}

\theoremstyle{plain}
\newtheorem{theorem}{Theorem}[section]
\newtheorem{lemma}[theorem]{Lemma}
\newtheorem{proposition}[theorem]{Proposition}
\newtheorem{corollary}[theorem]{Corollary}
\newtheorem{fact}[theorem]{Fact}

\newtheorem{remark}[theorem]{Remark}

\newtheorem{definition}[theorem]{Definition}

\DeclareMathOperator{\U}{\mathcal{U}}
\DeclareMathOperator{\V}{\mathcal{V}}

%\title{The Effective Birkhoff-Kakutani Theorem}
\title{Computable topological groups}

\author{}
\date{\today}

\begin{document}

\author{Heer Tern Koh}
\email{heertern001@e.ntu.edu.sg}

\author{Alexander G. Melnikov}
\address{Victoria University of Wellington, Wellington, New Zealand, \and Sobolev Institute of Mathematics, Novosibirsk, Russia}
\email{alexander.g.melnikov@gmail.com}

\author{Keng Meng Ng}
\address{Nanyang Technological University, Singapore}
\email{kmng@ntu.edu.sg}

\thanks{\rm Melnikov was supported by the Mathematical Center in Akademgorodok under agreement No.~075-15-2019-1613 with the Ministry of Science and Higher Education of the Russian Federation.  Ng was  supported by the Ministry of Education, Singapore, under its Academic Research Fund Tier 1 (RG23/19). This work was also partially supported by Rutherford Discovery Fellowship (Wellington) RDF-MAU1905, Royal Society Te Aparangi.}

\maketitle

\vspace{-10mm}
\begin{abstract} We investigate what it means for a (Hausdorff, second-countable) topological group to be computable. We compare several potential definitions in the literature.
We relate these notions with the well-established definitions of effective presentability for discrete and profinite groups, and compare these results with similar results in computable topology.
Most of these definitions can be separated by counter-examples.
Remarkably, we prove that two such definitions are  equivalent for locally compact Polish and abelian Polish groups.
More specifically, we prove that in these broad classes of groups,  every computable topological group admits a right-c.e.~(upper semi-computable)
presentation with a left-invariant metric, and a computable dense sequence of points.
 In the locally compact case, we also show that if the group is additionally effectively locally compact, then we can produce an effectively proper left-invariant metric.

\end{abstract}
\tableofcontents

\section{Intorduction}
\subsection{A brief overview}
Our  paper contributes to a fast developing branch of effective mathematics which combines methods of computable algebra~\cite{ErGon,AshKn, Ershov} with tools of computable analysis~\cite{Brattka.Hertling.ea:08,Wei00,PourElRich}
to study computable presentations of topological groups.   Maltsev~\cite{Ma61},  Rabin~\cite{abR},  Higman \cite{Hig},  Metakides and Nerode
\cite{MetNer79}  and  others (e.g., \cite{sinf, MeMo,lupini,ArnoHaar, tdlc}) suggested various notions of computability  for various classes of groups.
Whenever a theory emerges, one of the first tasks is to establish  that its key definitions are robust by supporting them with enough non-trivial examples and deep results.
Another important task is to compare the most important definitions and see if they are equivalent. In the context of algorithmic group theory, one of the most well-known examples of such a separation result is the celebrated work of Novikov~\cite{Nov:55} and Boone~\cite{Boone:59} who proved that not every finitely presented group has decidable Word Problem.
Such investigations often lead to a deeper understanding of the notions of computable presentability that are being studied. For instance, in his search for a more elegant proof of the Novikov-Boone theorem, Higman~\cite{Hig} discovered that `recursively presented' groups are exactly the subgroups of finitely presented groups, thus characterising one of two notions of presentability for groups in terms of  the other.

In the present paper we prove several \emph{positive} characterization-type results that we believe are fundamental to the emerging theory of computable topological groups. As the main result of the paper, we prove that in the locally compact and abelian case, a seemingly weak notion of effective topological presentability is indeed equivalent to the seemingly much stronger notion of right-c.e.~Polish presentability. The second main result of the paper further improves the result, but under one extra assumption of effective local compactness. (All these terms will be defined in due course.) Finally, we also support these positive results with counter-examples that separate several notions of computable presentability up to topological group isomorphism.  These counter-examples and their proofs relate our notions with the aforementioned `recursive' groups studied by Higman, computable groups  as defined by Maltsev~\cite{Ma61} and Rabin~\cite{abR}, and with `recursive' profinite groups investigated by Metakides and Nerode
\cite{MetNer79} and Smith~\cite{Smith1}. Indeed, we will see that in many cases some of these definitions turn out to be equivalent.

\

Before we formally state our results, we give a bit more background and briefly discuss the related literature.

\subsection{Notions of computable presentability} % Thus, the reader can safely restrict themselves to Polish groups
The most well-established notions of computable presentability of groups are restricted to discrete and profinite groups, as we discuss below.

We have already mentioned the notion of a `recursive presentation'~\cite{Hig} which is standard in combinatorial group theory. We call such presentations \emph{computably enumerable} (c.e.), or $\Sigma^0_1$, since the term ``recursive presentation'' can mean many different and non-equivalent notions (as we shall see shortly).
These presentations are groups of the form $F_\omega/ H$, where $F_\omega$ is the standard  reduced-word presentation of the free group upon $\omega$ generators and $H$ is its computably enumerable normal subgroup. The equality relation modulo $H$, the ``Word Problem'', does not have to be computable even in a finitely presented group~\cite{Nov:55, Boone:59}.
Perhaps motivated by this early fundamental result,
 Mal'cev~\cite{Ma61} and Rabin~\cite{abR} suggested a stronger notion of computable presentability for a group. In the notation above, a group is \emph{computably presented} if it is isomorphic to
$F_\omega/ H$, where $H$ is a computable subset of $F_\omega$.  In other words, a group is computable if it is c.e.~and the Word Problem is decidable in the group. (We note that the terms ``computable'' and ``recursive'' were often used to mean the same thing in the early literature, and this can potentially lead to much confusion in the context of group presentations.) Equivalently, a countably infinite group is computable if its domain is $\mathbb{N}$ and the group operations are represented by computable functions.
 This notion can be extended to an arbitrary discrete algebraic structure in the obvious way. The notion is well-established and is a central notion in the technically deep theory of computable algebraic structures, see \cite{AshKn,ErGon}.

From the perspective of computable presentability, the second most well-understood class is the class of profinite groups.  Metakides and Nerode
\cite{MetNer79} and then La Roche~ \cite{LaRo1,LaRothesis} and Smith~\cite{Smith1,SmithThesis} studied   ``recursively presented''  and ``co-r.e.-presented'' profinite groups.
To define computability in this class  we just need to say that the inverse system representing the group is (in some sense) algorithmic; we omit the definitions.
Among other results, they prove the effective versions of Galois correspondence that relate recursive and co-r.e.~presentations with
 computable and computably enumerable field extensions, respectively. Another duality established in the cited papers is a Stone-type duality between recursive groups and decidable classes in $2^{\omega}$, and between co-r.e.~presented groups and $\Pi^0_1$ classes in $2^{\omega}$. In both cases, the classes are also equipped with group operations.
 The much more recent paper~\cite{Pontr} proves  that  the recursive profinite abelian groups are exactly the Pontryagin duals of computable discrete torsion abelian groups.
There are many ways to apply these dualities to establish that co-r.e.~presentability is strictly weaker than recursive presentability for profinite groups.

One of the characteristic features of the results briefly discussed above is that \emph{the notions of computability in the profinite and the discrete case are interconnected and related via dualities of various kinds}. Are these two subjects just pieces of a bigger puzzle? More specifically, can we develop a general theory of computable topological groups that is not restricted to the profinite and discrete case?

\

The situation becomes much more complex when we consider groups which are neither profinite nor discrete. Following the analogy with the discrete and profinite cases,
a computable topological group should mean a \emph{computable space} together with \emph{computable group operations}.
In the present paper, we are mainly interested in Polish groups but some of the technical results proven here also work for Hausdorff second-countable groups.
Thus, for the remaining of the paper we will adopt the convention:
\begin{center}
\emph{All our groups and spaces are Hausdorff and second countable.}
\end{center}
Indeed, we are mainly interested in locally compact groups, and it well-known that every Hausdorff and second countable locally compact space is Polish.
Unfortunately, even in the nice case of a Polish(able) space, it is not even clear what ``computable space'' should mean exactly.

 Computable topology is notorious for its zoo of different notions of computability for a topological space (and a topological group).
In contrast with effective algebra~\cite{ErGon,AshKn} where all standard notions of computable presentability in common classes had been separated more than half a century ago (e.g., Novikov~\cite{Nov:55}, Boone~\cite{Boone:59},  Feiner~\cite{feiner1970}, Khisamiev~\cite{Hisa2}, Odintsov and Selivanov~\cite{OdSel-89}),
some  of the basic notions of computable presentability in topology have been separated only very recently
%these three main notions of computable presentability of a compact Polisah space have been separated  (up to homeomorphism)
~\cite{topsel,uptohom,lupini,bastone,separating}. %Some of these effective presentability notions and  separation results will be discussed below.
%We now briefly go over several potential definitions  of a computable topological (Polish) group based on notions from the literature.
Arranged from strong to weak, some common notions of effective presentations for a Polish space are as follows:

\begin{multline*}
\text{computably compact } \longrightarrow \text{ computable Polish}\\  \longrightarrow \text{ right-c.e~Polish } \longrightarrow \text{ computable topological}
\end{multline*}

\noindent All these notions will be formally defined in the preliminaries and also briefly discussed below; we only mention here that the first one clearly works only for compact spaces, and that all implications are known to be \emph{strict up to homeomorphism}, as established in~\cite{topsel,bastone,separating,lupini,EffedSurvey}.
This means that, in each case, there is a space that is effectively presentable in the weaker sense but is not homeomorphic to any space effectively presentable in the stronger sense\footnote{For the last implication see~\cite{separating}, for right-c.e.~vs.~computable Polish we cite \cite{bastone}, and for the first implication see~\cite{topsel,lupini,EffedSurvey}. Also, for the closely related notion of a left-c.e.~Polish space which will not be used in this paper, we cite~\cite{separating}.}.

Each of these four notions leads to a definition of a computable group.
In each case, a `computable group' would mean  `computable space' + `computable group operations', where computability of operations is understood in terms of approximations, i.e., effective operators (to be clarified).

Up to topological group isomorphism, are  these four  notions non-equivalent? How are these notions related to the well-established approach in the discrete case? What about the profinite case? We will answer these questions shortly. % but we note that only the bottom implication is known to be strict for (compact) \emph{groups}~\cite{lupini,EffedSurvey}.
But first, we %explain what is known in more detail, and we also
clarify and compare the notions in the diagram for \emph{spaces}.

\

There are several variations of the definition of a computable topological space
that can be found in, e.g.,  Kalantari and Weitkamp \cite{Kal1} and Spreen \cite{Spreen3}.
We will use the following, perhaps the \emph{weakest possible}, approach. A \emph{computable topological presentation} is given by a countable base of topology $(B_i)_{i \in \omega}$ consisting of non-empty basic open sets together with the c.e.~set $W$ that allows one to list intersections in the following weak sense: $$B_i \cap B_j = \bigcup \{ B_k\,\colon (i,j,k) \in W\}. $$
The standard examples of computable Polish spaces include right-c.e.~Polish spaces that will be defined shortly.
However, note that, in general, the definition of a computable topological space is \emph{point-free}. This feature can be easily exploited to show that there is a Polish space that is computable topological but is not homeomorphic to any right-c.e.~Polish space~\cite{separating}. Indeed, it follows from the simple proof in the companion paper \cite{separating} that, in general, \emph{for a computable topological  locally compact (Polish) space there is no bound on the complexity of its  Polish presentation, up to homeomorphism.}
Nonetheless, the notion of a computable topological space is quite popular in the literature, but it usually comes with some extra additional assumptions on top of the base weak definition.

\

The classical notion of a \emph{computable Polish space}   can be traced back to Ceitin~\cite{metric1} and Moschovakis \cite{metric2}. We say that a Polish space is computable Polish if there is a countable dense subset $(x_i)_{i \in \omega}$ and a complete metric $d$ compatible with the topology such that $d(x_i, x_j)$ can be uniformly computed to precision $2^{-n}.$ If we only require that the real $d(x_i, x_j)$ can be effectively approximated from above by enumerating its upper cut (in some, perhaps unnatural, order), then we get the notion of a right-c.e.~Polish space, also known under the name upper-semicomputable Polish space.
% It is quite natural to define a computable (right-c.e.) Polish presentation~\cite{MeMo}
%of a Polish group  to be a computable (resp., right-c.e.) Polish space that additionally computes the group operations to an arbitrary precision; we will clarify what this means in the preliminaries.
The reason why the right-c.e.~case is so important in the literature is because it is \emph{the} standard example of a computable topological space. Also, it is known that Stone duality associates `effectively compact' right-c.e.~Stone spaces with c.e.~presented Boolean algebras~\cite{bastone}. In particular, it follows from results in \cite{topsel,uptohom} and the aforementioned classical result of Feiner \cite{feiner1970} that there is a right-c.e.~Polish space not homeomorphic to a computable Polish one; see \cite{bastone} for a detailed explanation.

\

Finally, we  say that a space is computably (or effectively) compact if it is computable Polish and additionally,  we can effectively list all finite basic open covers of the space.
  It has been proven in  \cite{lupini,EffedSurvey,topsel} that there are compact spaces that are  computable   Polish but not homeomorphic to any
computably compact space. Interestingly, the proof in
\cite{lupini} builds a connected compact \emph{group} with this property. It follows from the proof in \cite{lupini} that there is a connected compact abelian \emph{group}
that has a computable Polish presentation (as a group, i.e., in which the operations are also computable), but so that its \emph{space}
is not homeomorphic to any computably compact space. Thus, at least one implication is known to be strict for groups. The notion of a computable compactness is clearly restricted to compact spaces (and groups) and  it won't be too important to us. However, its natural generalisation to  locally compact spaces
  will be useful in the present paper.

 %Using homological techniques, it is possible to show that this result about groups implies that there is a
% connected compact computable Polish \emph{space} not homeomorphic to any computably compact \emph{space}; see \cite{lupini,EffedSurvey}.

%   \emph{effectively locally compact}

%However, it has recently been discovered in \cite{lupini} that Pontryagin duality works for \emph{computably compact} groups.

%These a computable Polish (compact) groups in which we can effectively list all finite basic open covers of the group.
%  It follows form  \cite{lupini} that there are computable Polish compact groups that have no computably compact presentations.
%  It has also been discovered in \cite{EffedSurvey} that, in the profinite case, computable compact presentability is equivalent to `recursive' presentability (as discussed above).
%  It certainly seems that the notion of a computable compact group is robust since it admits several equivalent formulations, e.g., in terms of computability of Haar measure; see \cite{EffedSurvey,ArnoHaar} for an explanation.

%%  However, the notion is clearly restricted to compact spaces. These studies have recently been extended to locally compact totally disconnected groups in \cite{lupini,tdlc,separating}.
%We will discuss these results in a bit more detail later; at this stage we only mention that some of these results  use a certain notion of an \emph{effectively locally compact}
%space which we will also find useful in the present paper.

\

\subsection{Results}\label{sec:results} We are ready to discuss our results. Recall that in the companion paper \cite{separating} we illustrate that there is a computable topological locally compact Polish \emph{space} that is not homeomorphic to any arithmetic (or even analytic, and beyond) Polish space, late alone a right-c.e.~Polish space. Recall also that in a (right-c.e.~or computable) Polish presentation, we demand the existence of a dense computable sequence and that the metric is complete. However, the definition of a computable topological group is \emph{point-free}.
The principal result of the present paper is the following:

\begin{theorem}\label{main:theo}
For a Polish group $G$ that is either abelian or locally compact, the following are equivalent:
\begin{enumerate}
\item $G$ has a computable topological presentation,
\item $G$ has a right-c.e.~Polish presentation.
\end{enumerate}
Furthermore, in $(2)$ the metric can be taken left-invariant. (Or right-invariant.)
\end{theorem}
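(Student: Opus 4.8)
The direction (2)$\Rightarrow$(1) is essentially routine: a right-c.e.~Polish presentation with computable dense sequence yields a countable basis of rational balls, and one checks that the ``intersection relation'' $W$ can be enumerated (using that $d$ is right-c.e.~one can semidecide when $\overline{B(x_i,r_i)}\cup\overline{B(x_j,r_j)}$ is covered by smaller balls), and that the group operations are computable as operators between basic open sets. So the real content is (1)$\Rightarrow$(2): from the point-free data $(B_i)_{i\in\omega}$, $W$, and effective operators for multiplication and inversion, we must manufacture genuine \emph{points}, a \emph{complete} metric, and control its complexity (right-c.e.) and geometry (left-invariance).

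The plan is to run a Birkhoff--Kakutani-style construction \emph{effectively}, using the group structure to generate points where the point-free presentation gives none. First I would fix a basic open neighbourhood $B_0$ of the identity (identifiable because the operators recognise it) and, using the continuity of multiplication witnessed by the operators, effectively build a decreasing sequence $(V_n)_{n\in\omega}$ of symmetric open neighbourhoods of $1$ with $V_{n+1}V_{n+1}V_{n+1}\subseteq V_n$, each $V_n$ a computable union of basic open sets; this is the effective version of the standard lemma producing a ``normal sequence''. From the $V_n$ one defines, in the usual way, a function that is then averaged into a left-invariant (pseudo)metric $d$ with $d(g,h)<2^{-n}\Rightarrow g^{-1}h\in V_n$ and $g^{-1}h\in V_{n+1}\Rightarrow d(g,h)\le 2^{-n}$. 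The point is that each $V_n$ being effectively open makes the predicate ``$d(g,h)<2^{-n}$'' $\Sigma^0_1$ on suitable codes, which is exactly what right-c.e.~asks for. To get actual points of the completion, I would use the $V_n$ to choose a ``Cauchy scheme'' of basic open sets $B_{i_0}\supseteq B_{i_1}\supseteq\cdots$ with shrinking $d$-diameter, and take a countable dense family of such schemes closed under (approximations of) the group operations — density being arranged by a standard exhaustion over all basic open sets, using that in a topological group a neighbourhood base at $1$ translates to a base everywhere.

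The main obstacle — and where the hypothesis ``abelian or locally compact'' must be used — is that in a general Polish group the construction of the normal sequence $(V_n)$, or the verification that $d$ is complete and compatible (not just a pseudometric for a coarser topology), can fail to be effective or even to produce a metric generating the right topology: one needs enough ``room'' to keep shrinking the $V_n$ inside prescribed basic open sets, and one needs the metric completion to coincide with $G$ rather than a proper Raikov-type completion. In the locally compact case I would exploit a (basic) open neighbourhood of $1$ with compact closure to bound the search and force completeness; in the abelian case I would instead use that the group is a directed union of compactly generated open subgroups (or argue via the structure theory / Pontryagin-type decomposition alluded to in the introduction) so that the same shrinking can be carried out coordinate-wise. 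Getting the \emph{left}-invariant metric to additionally be \emph{right-c.e.}~(rather than merely co-c.e.~or of higher complexity) requires that every step of the Birkhoff--Kakutani averaging be a semidecidable, not merely limit-computable, operation — this is the delicate bookkeeping at the heart of the argument, and it is precisely here that working with the \emph{weakest} point-free notion in (1) makes the proof nontrivial. The final step is to record the completion points as a computable dense sequence and check that, with respect to the new metric presentation, the group operations — which we already had as effective operators on basic open sets — are computable in the Polish sense; this is a soft translation once the metric is in hand. (The right-invariant variant follows by applying the left-invariant case to the opposite group, equivalently by replacing $d(g,h)$ with $d(g^{-1},h^{-1})$.)
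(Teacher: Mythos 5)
Your overall architecture matches the paper's: an effective Birkhoff--Kakutani construction (symmetric effectively open $V_n$ with $V_{n+1}^3\subseteq V_n$, the chain-infimum metric, right-c.e.\ because membership of $x^{-1}y$ in an effectively open set is $\Sigma^0_1$ in names), followed by extraction of a computable dense sequence from nested ``Cauchy schemes'' of basic open sets with shrinking $d$-diameter, which is exactly the paper's Lemma on shrinking basic balls and does indeed hinge on completeness of $d$. One clarification: the compatibility of $d$ with the given topology and the effective construction of the normal sequence $(V_n)$ need neither local compactness nor commutativity --- the paper carries this out for an arbitrary (Hausdorff) computable topological group --- so the hypotheses enter at exactly one point, namely completeness of the left-invariant metric (a Polish group need not admit any complete left-invariant metric, so some hypothesis is unavoidable there).

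The genuine gap is your treatment of the abelian case. Your proposed argument --- writing $G$ as a directed union of compactly generated open subgroups, or invoking a Pontryagin-type decomposition --- is locally compact structure theory and simply does not apply to a general abelian Polish group: a separable Banach space, or $\mathbb{R}^\omega$, is connected (hence has no proper open subgroups), has no nontrivial compact subgroups, and is outside the scope of Pontryagin duality, yet these are precisely the groups the abelian case must cover beyond the locally compact one. What the paper uses instead is Klee's theorem: if a group carries a (two-sided) invariant metric and its topology is completely metrizable, then the group is already complete under that invariant metric; since in an abelian group the left-invariant metric from Birkhoff--Kakutani is automatically invariant, completeness comes for free from Polishability, with no structural decomposition and no extra effectivity to check. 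Your locally compact completeness argument (a $d$-ball around $e$ inside a compact neighbourhood, plus left-invariance, forces Cauchy sequences into compact translates) is fine and is essentially a streamlined version of the paper's route, which instead quotes Struble's proper left-invariant metric and the observation that it agrees with $d$ at small scales; but without a correct replacement for the abelian step your proof only establishes the locally compact half of the theorem.
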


The implication $(2)\rightarrow (1)$ is obvious, but  $(1)\rightarrow (2)$ is both non-trivial and (we believe) unexpected.  %Of course, our proof of Theorem~\ref{main:theo}
%heavily exploits computability of the group operations.
%in a right-c.e.~Polish presentation assumes that the metric is both complete and comes with a computable dense set. However, in $(1)$ we use the \emph{point-free} definition of a computable topological space. In stark contrast,
It should not be surprising that one of the crucial steps in the proof
is a new effective version of the classical Birkhoff-Kakutani metrization theorem;  this is  Theorem~\ref{thm:effbk}. The proof of the effective version  of Birkhoff-Kakutani  theorem requires much care since we use the rather weak  point-free approach to computable topological spaces. Even more care is needed to reconstruct the dense sequence from the point-free effective topology; this is Theorem~\ref{thm:completemet}.  We shall also explain why in the locally compact and in the abelian cases  the metric produced in Theorem~\ref{thm:effbk} is complete; this is not obvious at all, but several classical results from topological group theory will come to our aid.
The proof of  Theorem~\ref{main:theo} is spread through the paper; for the abelian case see Corollary~\ref{cor:abelian}, and for the locally compact case see Corollary~\ref{cor:locallycompact}.

 In Corollary~\ref{cor:ce} we also show that Theorem~\ref{main:theo} is \emph{sharp}
in the sense that, in general, we cannot produce a computable metric. Our counter-example is a discrete abelian group that admits a right-c.e.Polish copy but is not topologically isomorphic to any computable Polish group.  We also mentioned earlier that computable compact does not imply computable Polish among compact abelian groups. Combined with the Theorem~\ref{main:theo} and the aforementioned result in \cite{lupini}, we obtain that for abelian and for locally compact Polish groups and up to topological group isomorphism, the diagram looks like:

\

\begin{center}
computable topological

$\downarrow$ $\uparrow$

 right-c.e~Polish

 $\downarrow$ $\nuparrow$

 computable Polish

 $\downarrow$ $\nuparrow$

  computably compact

\end{center}

\noindent Clearly, for the latter implication the counter-example is compact; such examples can be found among connected compact~\cite{lupini} (as discussed above) and also among profinite~\cite{EffedSurvey} abelian groups.

%In the more general context of groups that are totally disconnected but not necessarily locally compact,
\begin{remark}\rm
We mention another notion of computable presentability motivated by research in computable structure theory that we did not include into the diagram. %Every totally disconnected Polish group is isomorphic to a closed subgroup of $S_\infty$.
 Classically, closed subgroups of $S_\infty$ are exactly the automorphism groups of discrete structures; see \cite{GaoBook}.
Every automorphism group of a discrete \emph{computable} structure is  a $\Pi^0_1$ (effectively closed) subgroup of a certain natural effective presentation of $S_\infty$; see \cite{sinf,tdlc}.
However, the converse fails~\cite{sinf,uptohom}.  For instance, it is known that a \emph{compact} (thus, profinite) $\Pi^0_1$ subgroup of $S_\infty$
 does not have to be topologically isomorphic to a $\Delta^0_\alpha$-Polish group for any fixed computable $\alpha$~\cite{uptohom}. Therefore, already for compact groups,  this notion of computable presentability is (much) weaker than the weakest definition of a computable topological group that we study in this paper. Strictly speaking, such presentations are not really computable since one has essentially no access to the evasive domain of the group.
\end{remark}

We now discuss the second  main result of the present article.
One of the two cases in Theorem~\ref{main:theo} is when the group is locally compact.
Struble \cite{Struble} showed that  a (Hausdorff,  second countable) locally compact topological group  admits a compatible left-invariant \emph{proper} metric; recall that a metric is proper if every closed bounded set is compact. Can we improve Theorem~\ref{main:theo} and show that, in a locally compact case, we can in fact produce a right-c.e.~proper metric?

In the literature, most \emph{effective} arguments that involve the use of compactness assume that the space satisfies a version
 of \emph{effective} compactness of some sort. One such notion we have already mentioned above. Remarkably, many definitions of effective compactness in the literature turn out to be equivalent; see~\cite{IlKi,EffedSurvey,Pauly}.
 We will formally define and discuss the notion of effective compactness that we chose in the preliminaries section. Roughly speaking, a set is effectively compact if we can list all of its covers by basic open balls.

 To have access to \emph{local} compactness, we need to generalize this notion to locally compact spaces. There are several definitions in the literature~\cite{Pauly,xu_grubba,Weih_Zheng}. We shall not attempt to compare these definitions up to homeomorphism. However, we suspect that they are perhaps non-equivalent.
 The notion suggested in \cite{Pauly} seems most suitable for our purpose. Roughly speaking, it it is a direct effectivisation of  the classical notion of local compactness that says that every point is contained in a compact neighbourhood. We will define it formally in the preliminaries.

 We are ready to state the second main result of the paper:

 \begin{theorem}\label{main2:theo}
Every locally compact computable topological group admits a  proper right-c.e.~Polish presentation in which the metric is left-invariant and proper. \end{theorem}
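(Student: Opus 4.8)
The plan is to post-process the presentation produced by Theorem~\ref{main:theo}. In the locally compact case that theorem already equips $G$ with a right-c.e.~Polish presentation: a computable dense sequence $(x_i)_{i\in\omega}$ together with a \emph{left-invariant}, complete, right-c.e.~metric $d_0$ (which we may take bounded by $1$) and computable group operations; in particular the identity $e$ is a computable point. What is missing is properness, and I would recover it by an effective version of Struble's theorem~\cite{Struble}, building a compatible, left-invariant, \emph{proper} metric $d$ out of $d_0$. Once $d$ is proper it is automatically complete (a bounded Cauchy sequence lies in a compact ball and hence converges), and $(x_i)$ stays dense, so $((x_i),d)$ is a bona fide Polish presentation; the only remaining task is to check that it is right-c.e.

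The classical construction I would effectivise runs as follows. Because $d_0$ is left-invariant, the function $g\mapsto\sup\{s:\overline{B_{d_0}(g,s)}\text{ is compact}\}$ is $1$-Lipschitz and left-invariant, hence constant, equal to some $c\in(0,\infty]$; if $c=\infty$ then $d_0$ is already proper and we are done, so assume $c<\infty$ and fix a rational $q\in(0,c)$. Then $V:=B_{d_0}(e,q)$ is an effectively open, symmetric, relatively compact neighbourhood of $e$, $H:=\bigcup_nV^n$ is an open (hence closed) subgroup with discrete, countable quotient $G/H$, and $\overline{V^n}$ is compact for every $n$. Following Struble, one defines $d$ so that on each coset of $H$ it agrees with the intrinsic (``$\delta$-chain'') metric induced by $d_0$ using only steps of $d_0$-length $<\delta$, for a suitable small rational $\delta$, and so that moving between cosets incurs a cost governed by a strictly increasing weight on $G/H$ (enumerate representatives $g_1,g_2,\dots$ with $g_k\notin H\cup g_1H\cup\dots\cup g_{k-1}H$ and charge $\ge k$ for entering $g_kH$). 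Struble's analysis shows this $d$ is a left-invariant compatible metric; it is complete (a $d$-Cauchy sequence is $d_0$-Cauchy, hence $d_0$-converges, and is then $d$-close to its limit by a single short step) and proper, by a Hopf--Rinow--type argument inside each coset together with finiteness of the coset-balls of the weighted metric on $G/H$. I would cite~\cite{Struble} for the non-effective statement and only supply the extra bookkeeping below.

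The effective content is that, once $q$ and $\delta$ are fixed, every ingredient above is \emph{right-c.e.}\ (upper semicomputable). Since $e$ is computable and multiplication is a computable operator, each $V^n=B_{d_0}(e,q)^n$ is uniformly effectively open, so ``$g\in V^n$'' is c.e.\ in a $d_0$-name for $g$ and in $n$; likewise ``$d_0(x_a,x_b)<\delta$'' and ``$d_0(x_a,x_b)<q'$'' are c.e.\ because $d_0$ is right-c.e.; and ``$g\in g_jH$'', i.e.\ ``$g_j^{-1}g\in H$'', is c.e.\ because $H$ is effectively open. Consequently, for rational $r$, ``$d(x_i,x_j)<r$'' reduces to the existence of a finite chain of dense points witnessing a cost $<r$ (short steps inside cosets plus coset-jumps), together with the relevant c.e.\ certificates; this is a $\Sigma^0_1$ condition, uniformly in $i,j,r$. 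Hence $d(x_i,x_j)$ has a c.e.\ upper cut and $((x_i),d)$ is a right-c.e.~Polish presentation of $G$ with $d$ left-invariant and proper, as required; should one prefer to start from the point-free data directly, Theorem~\ref{thm:completemet} can be used to re-extract the dense sequence first.

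The main obstacle I anticipate is threading right-c.e.-ness through Struble's construction while keeping control of the component group $G/H$ inside the weak point-free framework of Theorem~\ref{thm:effbk}. Three points deserve care. (a) The parameters $c$ (hence $q$) and $\delta$ are available only \emph{non-uniformly}: we can prove they exist but cannot compute them; this is harmless for the \emph{existence} of a presentation, but one must verify that no later step needs an actual bound on them, only their (fixed, if unknown) values. (b) ``Right-c.e.''\ furnishes only over-estimates of $d$, whereas the proofs that $d$ is compatible and proper use genuine metric identities; one must therefore present $d$ in a manifestly $\Sigma^0_1$ form (the infimum over finite chains above) that coincides on the nose with Struble's metric, and check that the over-approximation introduces no rounding that would spoil the length-metric structure on cosets (needed for Hopf--Rinow) or the properness of the coset weights. (c) The quotient $G/H$ does not split off as a group with a left-invariant metric, so one cannot treat it separately; it must be woven into a single left-invariant gauge, exactly as in Struble. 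I expect (b) together with (c)---effectivising a somewhat delicate topological construction while simultaneously handling the discrete quotient---to be the bulk of the work, whereas left-invariance and the right-c.e.\ bookkeeping are routine given Theorem~\ref{main:theo}.
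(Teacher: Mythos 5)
Your route is not the paper's. The paper proves this theorem under the hypothesis of \emph{effective} local compactness: it combines Theorem~\ref{thm:effplig} with Theorem~\ref{thm:completemet} (see Corollary~\ref{cor:properstuff}), and the construction there is an effectivisation of Struble's one-parameter family $\{U_r\}_{r\in\mathbb{Q}^+}$ of symmetric neighbourhoods with $U_rU_s\subseteq U_{r+s}$, where the generating sets are taken from an ercs (Lemma~\ref{lem:paulyercs}) so that every $U_r$ sits inside an \emph{effectively} compact set; this yields a metric that is not merely proper but effectively proper. You instead post-process the presentation of Corollary~\ref{cor:locallycompact} and effectivise the open-subgroup/coset-weight version of Struble, dropping effective local compactness and claiming only classical properness. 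Note that the authors state explicitly that they do not know whether effective local compactness can be dropped from this theorem (they read it with the effectively proper conclusion in force), so even a completed version of your argument would establish the literal wording but strictly less than what the paper's proof delivers, and you should say so explicitly.

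There is also a concrete gap at the heart of your effectivity claim. The greedy transversal $g_1,g_2,\dots$ with $g_k\notin H\cup g_1H\cup\dots\cup g_{k-1}H$ is a $\Pi^0_1$ selection: since $H=\bigcup_nV^n$ is only effectively \emph{open}, non-membership in $H$ and in finite unions of its cosets is co-c.e., so the $g_k$ cannot be produced as uniformly computable points. Consequently your key certificate, ``$g\in g_jH$, i.e.\ $g_j^{-1}g\in H$, is c.e.\ because $H$ is effectively open,'' is unjustified --- it is c.e.\ only relative to names of the $g_j$ --- and the claimed $\Sigma^0_1$ form of ``$d(x_i,x_j)<r$'' is not established for the metric you actually defined; this is exactly where right-c.e.-ness can be lost, not routine bookkeeping. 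The standard repair is to make the coset weights manifestly upper semicomputable, e.g.\ give the coset $C$ the weight $r(\min\{j: x_j\in C\})$ for a fixed computable increasing unbounded $r$ (the relation ``$x_j^{-1}x_m\in H$'' \emph{is} c.e., so this minimum is approximable from above), which still ensures that only finitely many cosets have weight below any given bound, which is all properness needs. Alternatively, and closer to the paper: for a fixed small rational $q$, left-invariance makes every ball $B_{d_0}(x_n,q)$ relatively compact, so the uniformly effectively open symmetric sets $B_{d_0}(x_n,q)\cup B_{d_0}(x_n,q)^{-1}$ cover $G$ and can replace the ercs-generators in the proof of Theorem~\ref{thm:effplig}, giving a right-c.e., left-invariant, classically proper metric without effective local compactness. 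With either fix you still owe two verifications you currently wave at: that the infimum defining $d$ is attained along chains witnessed by basic open sets through the dense points, and that the new presentation is effectively compatible with the old one so that the group operations remain computable (cf.\ Lemma~\ref{lem:d_delta_equivalent} and the end of the proof of Theorem~\ref{thm:effplig}). Those are manageable; the transversal step as written is the genuine gap.
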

Furthermore, the metric produced in the theorem is itself effectively locally compact and indeed  \emph{effectively proper} in the sense that will be clarified in the preliminaries.
We also point out that, in both Theorem~\ref{main:theo} and Theorem~\ref{main2:theo}, the right-c.e.~Polish presentations that we build are \emph{computably homeomorphic} (indeed, \emph{effectively compatible}) to the given computable topological presentation of the group. These standard notions will be clarified later.
Theorem~\ref{main2:theo} will be derived as a corollary of a rather general technical result Theorem~\ref{thm:effplig} combined with Theorem~\ref{main:theo}; see Corollary~\ref{cor:properstuff}.

We do not know whether the assumption of effective local compactness can be dropped from Theorem~\ref{main2:theo}. However,  similarly to Theorem~\ref{main:theo},  we do know that the result cannot be improved to give a computable proper metric. This is (essentially) because the aforementioned Corollary~\ref{cor:ce} actually gives a \emph{discrete} example.
Since having a discrete example is perhaps not particularly exciting, in Proposition~\ref{prop:profi} we produce an example of a profinite group that is right-c.e.~Polish and effectively compact, but is not homeomorphic to any effectively compact computable Polish group. Note that in the compact case every metric is automatically proper.

\

\subsection{Connections to other notions in the literature.}\label{sec:further} We now briefly discuss how the notions of effective presentability of Polish groups studied in the paper are related to other notions of effectiveness in the literature.
%Computable Polish groups were first systematically studied in~\cite{MeMo} and, in the special abelian case, in \cite{Pontr}.

In Section~\ref{sec:sep} we also illustrate that, in the discrete case, computable Polish presentability is equivalent to computable presentability in the sense of Mal'cev~\cite{Ma61} and Rabin~\cite{abR}. We will also see that right-c.e.~Polish presentability is equivalent to c.e.~presentability for discrete groups.
Note that, in the discrete case, all our presentations  are vacuously effectively locally compact.

Some version of computable (local) compactness seems to be a  necessary extra assumption in a `truly' computable presentation.
As illustrated in \cite{Pontr}, computable Polish presentations do not make Pontryagin duality effective in the compact abelian case.
In contrast, the aforementioned \cite{Pontr} and the recent~\cite{lupini,EffedSurvey} establish effective versions of Pontryagin duality for computably  compact connected abelian groups and  `recursive' profinite abelian groups.
%The  duality between profinite and torsion abelian groups uses the definition of a `recursively presented' profinite group due to Metakides and Nerode
%\cite{MetNer79} and then studied in \cite{LaRo1,LaRothesis,Smith1,SmithThesis}. This result directly relates this old notion to our definitions, in the profinite case.
%Recall that we discussed `recursive' and `co-r.e.' profinite groups above.
We shall not define `recursive presentations' of profinite groups, since it has been recently discovered in \cite{EffedSurvey} that a profinite group is `recursively presented' if, and only if, it admits a computably compact presentation.
We will use profinite groups and Pontriagin duality in the proof of Proposition~\ref{prop:profi}.
 We  suspect that, for a profinite group, co-c.e.~presentability should perhaps  be equivalent to effectively compact right-c.e.~Polish presentability.
 This is certainly the case for some profinite groups, as exploited (implicitly) in the proof of Proposition~\ref{prop:profi}.

In the satellite paper \cite{separating} we investigate the especially nice case of computably locally compact computable  Polish groups. Quite interestingly, we show that in the totally disconnected locally compact (tdlc) case, a group admits a presentation like that if, and only if, it is computably presentable in the sense of \cite{tdlc,lupini}. We note that \cite{tdlc,lupini} contains several equivalent definitions of computable presentability of a tdlc group, all of which turn out to be equivalent. These equivalent definitions also generalize the  profinite and discrete cases discussed above, and additionally make the Pontryagin - van Kampen duality fully effective for tdlc abelian groups whose duals are also tdlc.

Beyond local compactness,  the  notion of a computable Polish group turned out to be closely related to computable structure theory.
Interestingly, many results in computable structure theory can be viewed as a special case of a computable Polish group computably acting on a computable Polish space. Also, typically the more general result requires a simpler proof; see \cite{MeMo}.
As noted in \cite{MeMo}, many results in \cite{MeMo} can be carried under the weaker assumption that the group is computable topological and admits a c.e.~strong (or formal) inclusion; see Remark~\ref{rem:formal} for a discussion. Quite unexpectedly, the proof of our first main result Theorem~\ref{main:theo}  implies that these seemingly strong extra assumptions in~\cite{MeMo} can be completely dropped when we talk about computable topological Polish groups. See Remark~\ref{rem:formal}  for an explanation.

\

Of course, there are other potential notions of computable presentability that could perhaps work for some special subclasses of Polish groups.
 For instance,  we have mentioned left-c.e.~(lower semi-computable) Polish spaces; these are defined similarly to right-c.e.~Polish spaces, but they seem less well-understood than the latter. For instance, even finding a `natural' example of such a space that would not be obviously computable Polish is a bit of a challenge.
 It is known however that there is a left-c.e.~Polish space not homeomorphic to any computable Polish space~\cite{separating}. Left-c.e.~Polish spaces do not necessarily induce a natural `computable topological' structure, and thus  perhaps are not suitable for representing topological groups in general.  However, interestingly,  every left-c.e.~Polish Stone space is homeomorphic to a computable Polish space~\cite{separating} and, thus, to a computably compact one \cite{uptohom,EffedSurvey}. So it could be that the notion is suitable and well-behaved in the context of profinite or tdlc groups\footnote{The reason behind it is that, while right-c.e.~spaces make formal inclusion c.e., left-c.e.~spaces make formal disjointedness c.e., and thus we can effectively split the space into connected components. We omit these definitions.}.

We already discussed the weak notion of an effectively closed subgroup of $S_\infty$. Other weak notions include
 (hyper)arithmetical presentations of higher degree, such as
 $\Delta^0_\alpha$-Polish and  right- or left-$
 \Sigma^0_\alpha$ Polish presentations. Indeed, we have already mentioned $\Delta^0_\alpha$-Polish presentations above. We strongly conjecture that most of these definitions can be separated from each other by direct relativisation of the known effective results or  using Pontryagin duality and the corresponding results from the discrete abelian case~\cite{Khi}. However, the importance and the exact role of these notions in the theory is not yet clear (beyond their use in extreme counter-examples such as the one in~\cite{uptohom}).

\subsection{The two main definitions} We believe that the results presented in the present paper, combined with various results in \cite{tdlc, Pontr, lupini, separating, EffedSurvey, uptohom, sinf} some of which have been discussed above, establish a solid foundation for the rapidly emerging general theory of algorithmically presented topological groups. In particular, it appears that the basic definitions of effective presentability are robust and nicely align themselves (via direct equivalence or duality) with the  well-established
notions that work for profinite and computable groups.
At least in the important case of locally compact Polish groups, the overall intuition seems to be  as follows:

\smallskip

\begin{center}

`computable Polish' + `computably locally compact'  $\sim$ `computable'

\

 and

\

`right-c.e.~Polish' + `effectively locally compact' $\sim$ `computably enumerable',
\end{center}

\medskip

\noindent where %`computably enumerable' presentations do not assume solvability of the Word Problem but can compute the operations, and
$\sim$ stands for `should be viewed as an adequate generalisation of'. The subtle difference between computable compactness and effective compactness
will be elaborated in the preliminaries.

\

 There are many open questions that can be attacked in the new theory; e.g., we cite~\cite{MDsurvey} for problems related to computable classification. For example, which classes of profinite or tdlc groups admit a Friedberg enumeration? What is the complexity of the index set of (say) $SO_3(\mathbb{R})$, up to topological isomorphism? The list goes on.
Also, we wonder if Pointryagin - van Kampen duality works for arbitrary computably locally compact abelian groups;  this question has been raised in \cite{lupini}. We leave these (and many other) questions of this sort open for future investigation.

\section{Preliminaries}

\subsection{Computable topological spaces}
As we already stated in the introduction,
\emph{we assume that all our topological spaces  and groups  are Hausdorff and second-countable.}
The definition below is central to the paper.

\begin{definition}[see, e.g., Definition 2.1 of  \cite{KudKorTop}
of Definition 4 of~\cite{comptop}]\label{def:compttopsp}\rm
	A \emph{computable topological space} is given by a computable, countable basis of its topology for which the intersection of any two basic open sets (``basic balls'') can be uniformly computably listed. More formally, it
	 is a tuple $(X,\tau,\beta,\nu)$ such that
	\begin{itemize}
		
		\item $\beta$ is a base of $\tau$ consisting of non-empty sets,
		
		\item $\nu\colon \omega \to \beta$ is a computable surjective map, and
		
		\item there exists a c.e. set $W$ such that for any $i,j\in\omega$,
		\[
			\nu(i) \cap \nu(j) = \bigcup \{ \nu(k)\,\colon (i,j,k) \in W\}.
		\]
	\end{itemize}
	
	We say that a topological space has a \emph{computable topological presentation} if it is homeomorphic to a computable topological space (with is of course called a computable topological presentation of the space).
	
\end{definition}

Let $(X,\tau,\beta,\nu)$ be a computable topological space. For $i\in\omega$, by $B_i$ we denote the open set $\nu(i)$. As usual, we identify basic open sets $B_i$ and their $\nu$-indices. In order to simplify our notation even further, we will never actually use the notation $(X,\tau,\beta,\nu)$ and will just say that $\tau$ is a computable topological presentation of $X$.

We note that computable topological spaces are closed under taking finite direct products; the computable topology is given by the product topology (equiv., box topology).
We will not need infinite direct products of spaces in the paper.

 Perhaps, the most natural examples of computable topological Polish spaces
are right-c.e.~spaces; see, e.g., Theorem 2.3 of \cite{KudKorTop}; we also cite~\cite{bastone,EffedSurvey} for a detailed proof.
For instance, every computably metrized Polish space is a computable topological space. We discuss these notions in the next subsection.

\begin{definition}\label{def:point} \rm
We call
\[
N^x = \{i: x \in B_{i}\}
\]
\emph{the name of $x$} (in a computable topological space $ X$).
\end{definition}

We can also use basic open balls to produce names of open sets, as follows.

\begin{definition}\label{def:open}\rm
A \emph{name} of an open set $U$ in a computable topological space $X$ is a set  $W \subseteq \mathbb{N}$ such that $U = \bigcup_{i \in W} B_i$, where $B_i$ stands for the $i$-th basic open set in the basis of $X$.
\end{definition}

 If  an open $U$ has a c.e.~name, then we say that $U$ is \emph{effectively open}.
If $C$ is closed then its name is the name of its complement. We say that $C$ is effectively closed, or simply $\Pi^0_1$, if its complement is effectively open.
We also say that a closed set $C$ is $\Sigma^0_1$ if we can list all basic open sets that intersect $C$. A closed set is computable compact if it is both $\Sigma^0_1$ and $\Pi^0_1$.

Recall that an enumeration operator turns enumerations of sets into enumerations of sets and leads to the notion of an enumeration degree.
We omit the formal definition; see ~\cite{rogers}. Enumeration operators are also sometimes called enumeration functionals.
They can be thought of as Turing functionals that use only `positive' information about the oracle.
The notions defined below are also standard.

\begin{definition}\rm\label{def:opencont}
Let $f$ be a map between two computable topological spaces.

\begin{enumerate}
\item We say that $f$ is effectively continuous if there is an enumeration operator $\Phi$ that on input (any enumeration of) a name of an open set $Y$ (in $Y$), lists a name of $f^{-1}(Y)$ (in $ X$).

\item We say that $f$ is effectively open if there is an enumeration operator that given (any enumeration of) a name of an open set $A$ in $X$, lists a name of the open set $f(A)$ in $Y$.

\end{enumerate}
\end{definition}

If $f$ is a homeomorphism, then it is is effectively open if, and only if, $f^{-1}$ is effectively continuous. We thus say that an homeomorphism between two computable topological spaces is effective (or computable) if its is both effectively open and effectively continuous.

One special case of a computable $f$ is the following:

\begin{definition}\rm
If $X$ is a computable topological space then a metric $d$ compatible with the topology of $X$ is computable if there is an enumeration functional $\Phi$ such that for any $y,z\in X$, and any enumeration $p,q$ of $N^y$ and $N^z$ respectively, $\Phi(p\oplus q)$ produces an enumeration of both the left and thew right cuts of the real  $d(y,z)$.

\end{definition}

Equivalently, we could require that the metric is a computable map $X^2 \rightarrow \mathbb{R}$, where $\mathbb{R}$ is equipped with the usual computable topology generated by rational intervals.

\begin{definition}\rm In the notation of the previous definition,
 a metric $d$ is said to be right-c.e.~(upper-semicomputable) if $\Phi(p\oplus q)$ enumerates the right cut of $d(y,z)$. (A left-c.e.~metric is defined similarly.)
\end{definition}

Notice that we do not require the metric to be complete, and we do not require the existence of a computable dense sequence in the space. Of course,  completeness and effective separability are  highly desirable properties, especially because we are mainly interested in Polish(able) spaces. Many natural computable metrics in standard Polish spaces  are complete and, furthermore,  `effectively separable'. We will discuss the complete effectively separable case in Subsection~\ref{Polish:section}.

\subsubsection{Computable topological groups}
We are ready to formally define the notion of a computable topological group.

\begin{definition}\rm\label{comptopgr}
A computable topological group is a triple $(G, \cdot, ^{-1})$, where $G$ is a computable topological space and the group operations
$\cdot: G \times G \rightarrow G$ and  $^{-1}: G \rightarrow G$ are effectively continuous.
\end{definition}

We also say that a topological group has a \emph{computable topological presentation} if it is homeomorphic to a computable topological group. (We call the latter a computable topological presentation of the group.) On order to simplify our notation, we will usually simply say `$G$ is computable topological' rather than  `$(G, \cdot, ^{-1})$ is computable topological'.

\begin{fact}
Multiplication and inverse operators are both effectively open in a computable topological group.
\end{fact}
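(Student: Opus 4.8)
The plan is to deduce effective openness of the group operations from their effective continuity together with the group structure, by pulling back the relevant preimages along the multiplication map and using the fact that the maps $x \mapsto x^{-1}$ and $x \mapsto gx$ (or a two-variable version of these) are their own inverses, or have easily computed inverses, as continuous maps.

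First I would deal with the inverse operation $\iota \colon G \to G$, $\iota(x) = x^{-1}$. Since $\iota$ is an involution, $\iota = \iota^{-1}$ as a set map, so $\iota(A) = \iota^{-1}(A)$ for every set $A$. Hence an enumeration operator witnessing that $\iota$ is effectively continuous — i.e.\ one that given a name of an open $U$ produces a name of $\iota^{-1}(U)$ — is already an operator that given a name of $U$ produces a name of $\iota(U)$. So $\iota$ is effectively open, essentially for free.

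Next I would treat multiplication $m \colon G \times G \to G$, $m(x,y) = xy$. The key observation is to introduce the auxiliary homeomorphism $\Psi \colon G \times G \to G \times G$ defined by $\Psi(x,y) = (xy, y)$, whose inverse is $\Psi^{-1}(u,v) = (uv^{-1}, v)$. Both $\Psi$ and $\Psi^{-1}$ are built from $m$ and $\iota$ by composition with projections and coordinate permutations, so both are effectively continuous (using that computable topological spaces are closed under finite products, that projections and the product topology are computable, and that $m,\iota$ are effectively continuous by hypothesis). Thus $\Psi$ is an effective homeomorphism of $G \times G$, and in particular $\Psi$ is effectively open: given a name of an open $W \subseteq G \times G$ we can enumerate a name of $\Psi(W)$, by applying the effective-continuity operator for $\Psi^{-1}$. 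Now given a name of an open $A \subseteq G$, the set $A \times G$ has a c.e.\ name computed uniformly (product topology), and $m(A \times G)$ and $\Psi(A \times G)$ have the same projection to the first coordinate; more precisely $m(A) $ in the sense we want is recovered as follows: for an arbitrary open $U \subseteq G$, $m(U)$ need not be open unless we are careful, so instead one argues pointwise — $m$ is an open map in the topological sense precisely because $m = \pi_1 \circ \Psi$ on the saturated set $A\times G$ and $\pi_1$ is an open map; feeding the name of $\Psi(A \times G)$ into the effective-openness operator for the projection $\pi_1$ yields a name of $m(A\times G) = AG = \bigcup_{g \in G} Ag$. Finally, to get effective openness of $m$ as a map $G\times G \to G$ applied to a general open $W \subseteq G\times G$ (not just rectangles), note $W = \bigcup_k R_k$ for basic rectangles $R_k = B_{i_k}\times B_{j_k}$ enumerable from a name of $W$, and $m(W) = \bigcup_k m(R_k)$, each $m(R_k) = B_{i_k} B_{j_k}$ handled as above and the union taken effectively.

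The main obstacle I anticipate is bookkeeping at the point-free level: ``$m(A)$'' for an open set $A$ in one variable is not literally meaningful, and even for $W \subseteq G \times G$ open, $m(W)$ is open but one must produce its \emph{name} purely from a name of $W$ without reference to points. The cleanest route is to reduce everything to the genuine effective homeomorphism $\Psi$ (for which effective openness is immediate from the definition following Definition~\ref{def:opencont}) together with effective openness of the coordinate projections $\pi_1,\pi_2 \colon G\times G \to G$, which is a routine fact about the product of computable topological spaces that I would either cite or verify in one line (a basic open of $G$ pulls back to a basic rectangle, and $\pi_1(B_i \times B_j) = B_i$). Once those two ingredients are in place, effective openness of $m$ and $\iota$ follows by composing enumeration operators, and the Fact is proved.
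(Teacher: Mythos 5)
Your argument is correct, and for the inverse map it coincides with the paper's proof (both use that $\iota$ is an involution, so the effective-continuity operator applied to a name of $U$ already outputs a name of $\iota(U)=U^{-1}$). For multiplication, however, you take a genuinely different route. The paper works directly with names of open $U,V\subseteq G$: it uses effective continuity of the division map $(x,y)\mapsto x^{-1}y$ and enumerates basic sets $V'$ that are forced into $U\cdot V$ by a relation of the form $(B')^{-1}\cdot V'\subseteq V$ with $B'$ a basic set meeting $U$ (detecting ``$B'$ meets $U$'' is c.e.\ precisely because basic sets are nonempty in Definition~\ref{def:compttopsp}); one then checks separately that every point of $UV$ gets covered. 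You instead factor $m=\pi_1\circ\Psi$ through the shear $\Psi(x,y)=(xy,y)$, observe that $\Psi(W)=(\Psi^{-1})^{-1}(W)$ so effective openness of $\Psi$ is literally the effective-continuity operator for $\Psi^{-1}$ (Definition~\ref{def:opencont}), and finish with effective openness of $\pi_1$. This buys a cleaner, more structural argument that handles an arbitrary open $W\subseteq G\times G$ at once rather than only rectangles, at the cost of two routine verifications you rightly flag: uniform effective continuity of $\Psi^{-1}$ (composition and pairing of effectively continuous maps, plus effective intersections via the c.e.\ set $W$ of the product presentation) and effective openness of $\pi_1$, where $\pi_1(B_i\times B_j)=B_i$ again uses the standing nonemptiness of basic sets. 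Two small remarks: the middle passage about $A\times G$, saturation, and ``$m(A)$'' is muddled and unnecessary---your final reduction via $\Psi$ and $\pi_1$ already does everything---and you should state explicitly that nonemptiness of the basic $B_j$ is what legitimizes $\pi_1(B_i\times B_j)=B_i$; neither point is a gap.
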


\begin{proof}
Given some name for an effectively open set $U$, in order to enumerate the name for $U^{-1}$, simply enumerate the preimage of $^{-1}$ on $U$. This must be the name for $U^{-1}$ since $\left(U^{-1}\right)^{-1}=U$. Thus $^{-1}$ is effectively open.

Now given names for open $U,V$, we want to produce computably a name for $U\cdot V$.
The map $(x, y) \rightarrow x^{-1} y$ is computable.
Eenumerate all $V'$ s.t. $$U^{-1}\cdot V'\subseteq V,$$ which is the same as enumerating all $V'$ with the property $V'\subseteq UV$.
\end{proof}

\subsubsection{Realtivization}\label{subs:rel}
We can relativise all definitions in this section. For instance, we can talk about topological presentations that are not necessarily computable.
For instance, given a space,  if we could fix $\tau = \{B_i: \i \in \omega\}$ (that can be identified with $\omega$) and a set $W_\tau$ such that $$B_i \cap B_j = \bigcup \{ B_k \colon (i,j,k) \in W_\tau\}$$
and call it a topological presentation of the space. We stretch our terminology slightly and denote the representation by $\tau$ rather than $W_\tau$.

Then, given such
 topological presentations $\tau_0$ and $\tau_1$, we can define the notions of an effectively continuous and effectively open maps with resect to (w.r.t.)  to  $\tau_0$ and $\tau_1$ by direct relativisation. For instance, we shall need the following relativised notion of effective compatibility of presentation.

\begin{definition}\rm
Topological presentations $\tau_0$ and $\tau_1$ of $X$ are  \emph{effectively compatible} on $X$ if the identity function on $X$ is a homeomorphism that is computable w.r.t.~$\tau_0$ and $\tau_1$. That is, each basic open set in $\mathcal{B}_0$ is effectively open with respect to $\mathcal{B}_1$ and vice versa, uniformly in the indices for the basic open sets.

\end{definition}

\subsection{Effectively metrized spaces and groups}\label{Polish:section}

The notions below are standard.
%\begin{definition}\rm
%If $X$ is a computable topological space then a metric $d$ compatible with the topology of $X$ is computable if there is an enumeration functional $\Phi$ such that for any $y,z\in X$, and any enumeration $p,q$ of $N^y$ and $N^z$ respectively, $\Phi(p\oplus q)$ produces an enumeration of both the left and thew right cuts of the real  $d(y,z)$.
% A metric $d$ is said to be right-c.e.~(upper-semicomputable) if $\Phi(p\oplus q)$ enumerates the right cut of $d(y,z)$. (A left-c.e.~metric is defined similarly.)
%\end{definition}

\begin{definition}\rm
Fix a Polish(able) space $M$.

\begin{enumerate}
\item It is  \emph{computable Polish} or \emph{computably (completely) metrized} if there is a compatible, complete metric $d$ and a countable sequence of \emph{special points} $(x_i)$ dense in $M$ such that, on input $i, j, n$, we can compute a rational number $r$ such that $|r - d(x_i, x_j)| <2^{-n}$.

\item  It is \emph{right-c.e.~Polish} or  \emph{upper-semicomputable Polish} if there is a compatible, complete metric $d$ and a countable sequence of \emph{special points} $(x_i)$ dense in $M$ such that, on input $i, j$, the right cut $\{r \in \mathbb{Q}: d(x_i, x_j) < r \}$ of $d(x_i, x_j)$ can be unformly computably enumerated.

\end{enumerate}

\end{definition}
Clearly, any computable Polish space is right-c.e.~Polish, but there are examples of right-c.e.~Polish spaces that are not even homeomorphic to any computable Polish spaces; \cite{bastone}. We have already mentioned above, every right-c.e.~Polish space can be viewed as a topological space.
Indeed, in a right-c.e.~Polish space define a \emph{basic open ball} to be an open ball having a rational radius and centred in a special point. Let $(B_i)$ be the effective list of all its basic open balls, perhaps with repetition. It is not too difficult to show using triangle inequality that  $(B_i)$ induces a computable topological structure on the space; e.g., \cite{KudKorTop,bastone,EffedSurvey}.  In contrast, in the satellite paper~\cite{separating} we show that there is a Polish(able) space that admits a computable topological presentation, but is not homeomorphic to any right-c.e.~Polish space.

\begin{remark}\rm
Computability of a function between computable (more generally, right-c.e.) Polish spaces admits several reformulations equivalent to Def.~\ref{def:opencont}. For instance, we can require that a map is computable if it uniformly transforms fast converging sequences of special points  to fast converging sequences of special points; see \cite{EffedSurvey,IlKi} for a detailed exposition of computable metric space theory
 (which we omit here). In \cite{MeMo}, such lemmas were formally verified for computable topological spaces with c.e.~`strong (formal) inclusion'; both computable Polish and right-c.e.~Polish spaces have this property.
\end{remark}

Recall that all our topological spaces and groups are Hausdorff and second-countable.
Since every right.c.e.~Polish space is computable topological, and since every computable Polish space is right.-c.e., the following notions are direct generlizations of
Def.~\ref{comptopgr}.

\begin{definition}\rm

\begin{enumerate}

\item A computable Polish group is computable Polish space together with computable group operations $\cdot$ and $^{-1}$.

\item A right-c.e.~Polish group is right-c.e.~Polish space together with computable group operations $\cdot$ and $^{-1}$.

\end{enumerate}

\end{definition}

The notion of a computable Polish group is due to Melnikov and Montalb{\'a}n~\cite{MeMo}.
We will see that in the discrete case it is equivalent to computable presentability of the group in the sense of Mal'cev~\cite{Ma61} and Rabin~\cite{abR}.
We will also see that the notion of a right-c.e.~Polish group can be viewed as a generalization of the standard notion of a c.e.-presented ($\Sigma^0_1$-presented, positive) group in effective algebra~\cite{ErGon}, and of a co-c.e.~presented profinite group
studied by LaRoche  and Smith~\cite{Smith1}.
The latter notion, however, also additionally assumes a certain weak version of \emph{effective compactness} which is another important notion that we discuss next.

We also note that the notions defined in this section can be relativized. For instance, we could define the notion of a Polish presentation in which we do not restrict the computable complexity of the metric.

\subsection{Effective compactness and effective local compactness}
The definition below is standard in the literature:

\begin{definition}\rm\label{def:effcomp} We say that a computable topological space $X$ if \emph{effectively compact} (as a space) if there exists a uniformly computably enumerable list
of all tuples $\langle i_1, \ldots, i_k \rangle$ such that  $X = B_{i_1} \cup B_{i_2} \cup \ldots \cup B_{i_k}$.

\end{definition}

Note that the basic $B_{i}$ are assumed to be non-empty throughout. (If we drop this assumption then we conjecture that we get a weaker notion.)
In the context of a computable topological space, one could also talk about effective compactness of a subset of the space.

\begin{definition}\label{def:effcomp1}\rm We say that a compact subset $K$ of computable topological space $X$ if \emph{effectively compact} (as a subset) if there exists a uniformly computably enumerable list
of all tuples $\langle i_1, \ldots, i_k \rangle$ such that  $K \subseteq B_{i_1} \cup B_{i_2} \cup \ldots \cup B_{i_k}$.

\end{definition}

The following fact is well-known (for example, see \cite[Lemma 2.3]{xu_grubba}):

\begin{fact}\label{lem:comsub}
Given a name for a closed set $A$ and a name for a compact set $K$, we can list a name for $A\cap K$.
%
%there is a Turing functional $\Phi$ such that given any $\psi$-name $p$ for a closed set $A$ and a $\kappa^c$-name $q$ for a compact set $K$, $\Phi(p\oplus q)$ returns a $\kappa^c$-name for $A\cap K$.
%Let $K$ be an effectively compact set. If $F\subseteq K$ is effectively closed, then $F$ is effectively compact.
\end{fact}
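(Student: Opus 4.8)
The plan is to read the two inputs through the definitions already in place. By Definition~\ref{def:effcomp1}, a \emph{name} for the compact set $K$ is a uniformly c.e.\ list of all finite tuples $\langle c_1,\dots,c_n\rangle$ with $K\subseteq B_{c_1}\cup\dots\cup B_{c_n}$, while a name for the closed set $A$ is, by convention and via Definition~\ref{def:open}, a c.e.\ set $W_A$ with $X\setminus A=\bigcup_{j\in W_A}B_j$. Since $A\cap K$ is a closed subset of a compact set in a Hausdorff space, it is itself compact, so the object to be produced is again a name in the sense of Definition~\ref{def:effcomp1}: a c.e.\ list of all finite tuples $\vec i=\langle i_1,\dots,i_k\rangle$ with $A\cap K\subseteq\bigcup_t B_{i_t}$. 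The whole argument rests on the elementary equivalence, for $U=\bigcup_t B_{i_t}$,
\[
A\cap K\subseteq U\quad\Longleftrightarrow\quad K\subseteq U\cup(X\setminus A),
\]
which just records that a point of $K$ lies in $U$ precisely when it lies in $A$.

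First I would set up the enumeration by dovetailing the two given c.e.\ streams: the list of finite covers of $K$ coming from the name of $K$, and the enumeration of $W_A$ coming from the name of $A$. Whenever a tuple $\vec c=\langle c_1,\dots,c_n\rangle$ is listed as a cover of $K$, and whenever a subtuple $\vec f$ of $\vec c$ has had all of its indices enumerated into $W_A$ by the current stage, I would output the complementary subtuple $\vec i=\vec c\setminus\vec f$. Intuitively, $\vec f$ supplies basic balls guaranteed to miss $A$, so discarding them still leaves a cover of the part of $K$ that can meet $A$.

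Soundness is immediate: if $\vec i\cup\vec f$ covers $K$ and every $B_j$ with $j$ in $\vec f$ is contained in $X\setminus A$, then intersecting with $A$ annihilates the $\vec f$-part, so $A\cap K\subseteq\bigcup_{t}B_{i_t}$; hence every tuple we emit is a genuine cover of $A\cap K$. The step I expect to carry the real content is \emph{completeness}, and this is exactly where compactness enters. Suppose $A\cap K\subseteq U=\bigcup_t B_{i_t}$; by the displayed equivalence $K\subseteq U\cup(X\setminus A)=U\cup\bigcup_{j\in W_A}B_j$, which is an open cover of the compact set $K$. Applying the finite subcover property yields a finite $F\subseteq W_A$ with $K\subseteq U\cup\bigcup_{j\in F}B_j$; this combined tuple therefore appears in the given cover-enumeration of $K$, while each $j\in F$ eventually appears in the enumeration of $W_A$, so at some finite stage the procedure outputs exactly $\vec i$. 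The only subtlety to verify is that the search is genuinely effective --- we only ever test membership in the two given c.e.\ lists and never need to decide containment or emptiness of basic balls --- and that the finite subcover guaranteed by compactness is actually \emph{found} by the dovetailing, which it is, since both its witnessing cover of $K$ and its finitely many witnesses in $W_A$ are enumerated after finitely many steps.
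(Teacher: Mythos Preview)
Your proof is correct and is exactly the standard argument; the paper itself does not spell out a proof but cites \cite[Lemma 2.3]{xu_grubba}, whose content is precisely the procedure you describe (enumerate a tuple $\vec i$ as a cover of $A\cap K$ once some extension $\vec i\cup\vec f$ appears as a cover of $K$ with every index in $\vec f$ already enumerated into the name of $X\setminus A$, using compactness for completeness).
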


\begin{remark}\rm\label{strange:remark}
 We will get a stronger notion of a \emph{computably compact} set if, in Def.~\ref{def:effcomp1}, we additionally require that $B_{i_j} \cap K \neq \emptyset$ for every cover, i.e., that the set is $\Sigma^0_1$ closed.
Of course, this would not be an issue in Def.~\ref{def:effcomp} because each basic open $B_{i_j}$ is non-empty and, thus, vacuously `intersects' the space.
The difference is already seen in the context of $2^{\omega}$, where a compact set is  effectively compact (as a subset) if, and only if, it is~$\Pi^0_1$.
Recall also that a non-empty $\Pi^0_1$ class does not have to contain any computable points at all.
In other words, even in $2^{\omega}$ we get that effective compactness of $K$ implies only that the open complement of $K$ can be listed without necessarily making the set `computably closed'. (Recall that a closed set $C$ is computably closed iff its complement is c.e.~open and also we can list all basic open $B$ such that $B \cap C \neq \emptyset$.)
In other words, an effectively compact subset of a computable Polish space does not have to be `computably compact' = `effectively compact' + `computably closed'.
Indeed, it has been shown in \cite{bastone}, there is a $\Pi^0_1$-class in $2^{\omega}$ that is not even \emph{homeomorphic} to any computable Polish space (thus, to any effectively compact computable Polish space). Nonetheless, there is a very nice correspondence between
effectively compact \emph{spaces} and computably compact \emph{sets}, as essayed in \cite{EffedSurvey,IlKi}.
\end{remark}

We shall adopt:

\begin{center}
\begin{quote}
When we talk about effectively compact subsets of a computable topological space, we mean Def.~\ref{def:effcomp1}
and do not necessarily assume the set is additionally $\Sigma^0_1$ closed.
\end{quote}
\end{center}

\begin{definition}[E.g., \cite{xu_grubba}]\rm \label{def:compact}
The name of a compact subset $K$ of a computable topological space $X$ is the
list of all finite covers  of $K$ by basic open sets of $X$.
\end{definition}

More formally, the name $N^K$ of $K$ is the set $$\{\langle i_0, \ldots, i_k \rangle: K \subseteq \bigcup_{j \leq k} B_{i_j}\},$$
where $B_i$ is the $i$-th basic open set in $X$.
Then evidently $K$ is effectively compact if, and only if, it has a c.e.~name. (Note that we do not require that each $B_{i_j}$ has to intersect the set.)

\subsubsection{Effective local compactness}
Unlike the notion of an effectively compact space which is robust \cite{EffedSurvey,IlKi}, there are several (seemingly) non-equivalent notions of effective local compactness in the literature
\cite{Pauly,xu_grubba,Weih_Zheng}. We shall not attempt to verify whether these notions of effective local compactness are equivalent up to homeomorphism since there is yet not enough evidence that these notions are equally important and iseful. We adopt the following:

\begin{definition}[\cite{Pauly}]\rm
A second countable topological space $X$ is effectively locally compact if there is an enumeration operator which, given a name for a point $x\in X$ and a name for an open set $U\ni x$, lists a name for an open set $V$ and a name for a compact set $K$ such that $x\in V\subseteq K\subseteq U$.
\end{definition}

As explained in the companion paper~\cite{separating}, in the context of computable Polish spaces,
we can drop $U$ in the definition above and assume $K$ is a computably compact (closed) ball of an arbitrary small
computable radius. However, groups in the present paper are rarely computable Polish.
Nonetheless, later in the paper we will be dealing with computable topological groups that additionally have a dense sequence of computable points in them.
The following lemma shall be useful:

\begin{lemma}[{\cite[Proposition 8]{Pauly}}]\label{lem:paulyercs}\rm
Suppose that a computable topological space $X$ has a dense set of uniformly computable points. Then $X$ is effectively locally compact if and only if there is a triple $\left( \{U_n\}_{n\in\omega},\{K_m\}_{m\in\omega},R \right)$, where
\begin{itemize}
\item $U_n$ is a  computable sequence of (uniformly) effectively open sets.
\item $K_m$ is a computable sequence of (uniformly) effectively compact sets.
\item $R\subseteq\mathbb{N}\times\mathbb{N}$ is a c.e. set such that $(n,m)\in R\Rightarrow U_n\subseteq K_m$.
\item For any open set $U$, we have
\[U=\bigcup_{\{m\mid K_m\subseteq U\}}\bigcup_{\{n\mid (n,m)\in R\}}U_n.\]
\end{itemize}

\end{lemma}

In other words, under the assumptions of the lemma, any open set can be essentially approximated by compact neighbourhoods from within, with a sufficient degree of effectiveness.
The triple $\left( \{U_n\}_{n\in\omega},\{K_m\}_{m\in\omega},R \right)$ was called an ercs for $X$ in \cite{Pauly}. Compare this to the related notion of a \emph{computably locally compact Hausdorff space} in \cite[Definition 3.2]{xu_grubba}.

\subsection{Effectively proper metrics}\label{sec:properdef}

Recall that a metric $d$ is proper if every closed bounded ball $\{y\mid d(x,y)\leq r\}$ is compact; equivalently, every closed bounded set is compact. Recall that a name of a closed set is a name of its open complement, and the name of a compact set is the list of all of it finite basic open covers.
The obvious effectivization of properness  is the following:

\begin{definition}\rm
A right-c.e.~Polish space $(\mathcal{M},d)$ is effectively proper if there exists an enumeration operator which, given a name  a closed set $A$ and a  basic open ball $B_d(\alpha,r)\supseteq A$,  lists a name for some compact set $K\supseteq A$.
%
%In layman's terms, this means that there is a Turing functional $\Phi$ such that given any $\psi$-name $p$ of a closed set $A$, and a basic open ball $B_d(\alpha,r)\supseteq A$, we have $\Phi(p,\alpha,r)$ is a $\kappa^c$-name for some compact set $K\supseteq A$.
%
%iff for any effectively closed and bounded set $F$, $F$ is effectively compact. That is, there is a Turing functional $\Phi$ that when given a name $N^{F}$ for an effectively closed set and an open ball $B_{d}(\alpha_{i},r)\supseteq F$, $\Phi(N^{F}\oplus \langle i,r\rangle)$, enumerates the compact name for $F$.
\end{definition}

The lemma below relates the notion with local effective compactness.

\iffalse

\begin{proof}
Let $K$ be an effectively compact set and $F$ an effectively closed set s.t. $F\subseteq K$. For each finite collection of basic open sets, $\{B_{i}\}_{i<n}$, enumerate it into the compact name for $F$ iff
\begin{itemize}
    \item $\{B_{i}\}_{i<n}\subseteq\{B_{j}\}_{j<m}$ where $\{B_{j}\}_{j<m}$ is some compact name for $K$, and
    \item for each $B_{l}$ where $B_{l}\in\{B_{j}\}_{j<m}$ and $B_{l}\notin\{B_{i}\}_{i<n}$, $B_{l}$ is enumerated by the closed name of $F$.
\end{itemize}
It is clear that any finite cover of $K$ is also a finite cover of $F$. Thus it is easy to see that the procedure only enumerates finite covers of $F$. It thus remains to check that all finite covers of $F$ are enumerated. Suppose then that $\{B_{i}\}_{i<n}$ is a finite cover of $F$. Then $\{B_{i}\}_{i<n}\cup\{B'_{j}\}$ where $\{B'_{j}\}$ is the closed name of $F$ is an open cover of $K$. By classical compactness, there must be a finite subcover of $K$, say $\{B_{i}\}_{i<n}\cup\{B'_{j}\}_{j<m}$. This must then be enumerated by the compact name of $K$ at some stage, and the procedure will see that for each $B_{l}\in\{B_{i}\}_{i<n}\cup\{B'_{j}\}_{j<m}$ but $B_{l}\notin\{B_{i}\}_{i<n}$, we necessarily have that $B_{l}\in\{B'_{j}\}_{j<m}$ which is the closed name of $F$, thus satisfying the conditions and hence $\{B_{i}\}_{i<n}$ must be enumerated into the compact name of $F$.
\end{proof}

\fi

\begin{lemma}
Given a Polish space $(\mathcal{M},d)$, we have (i) $\Leftrightarrow$ (ii) $\Rightarrow$ (iii). If $(\mathcal{M},d)$ is a computable Polish space then all three are equivalent:
\begin{itemize}
\item[(i)] $(\mathcal{M},d)$ is effectively proper.
\item[(ii)] Given an enumeration of a name for a closed set $A$ and a basic open ball containing $A$, we can list a compact name for $A$.
\item[(iii)] Given a (closed) name for the closed ball $B^\leq_d(\alpha,r) = \{x: d(\alpha, x) \leq r\}$ and the parameters $\alpha,r$, we can compute a compact name for $B^\leq_d(\alpha,r)$.
\end{itemize}
\end{lemma}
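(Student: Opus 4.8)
The plan is to prove the cycle (i) $\Rightarrow$ (ii), (ii) $\Rightarrow$ (i), (ii) $\Rightarrow$ (iii), and finally (iii) $\Rightarrow$ (ii) under the extra hypothesis that $(\mathcal{M},d)$ is computable Polish. The equivalence (i) $\Leftrightarrow$ (ii) is essentially a matter of unwinding definitions: (i) gives a compact set $K \supseteq A$, and from a name of a compact superset $K$ of a closed set $A$ together with the closed name of $A$ we can recover a compact name of $A$ itself using classical compactness (a finite basic cover of $A$ together with the open complement of $A$ covers $K$, hence has a finite subcover of $K$, which forces the original cover into the enumeration). This is exactly the routine argument sketched in the commented-out proof in the excerpt, so for (ii) $\Rightarrow$ (i) we just take $K = A$, and for (i) $\Rightarrow$ (ii) we combine the compact superset with that recovery step. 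I would write (ii) $\Rightarrow$ (i) first since it is immediate, and then spell out (i) $\Rightarrow$ (ii).

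For (ii) $\Rightarrow$ (iii): a closed ball $B^\leq_d(\alpha,r)$ is a closed set, and from the parameters $\alpha, r$ and a rational $r' > r$ we have the basic open ball $B_d(\alpha, r') \supseteq B^\leq_d(\alpha, r)$; feeding the given closed name of $B^\leq_d(\alpha,r)$ and this open ball into (ii) produces the desired compact name. The only subtlety is that (ii) demands an \emph{enumeration} of a name for the closed set, which in (iii) we are simply handed, so there is nothing to do beyond noting this.

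The genuinely non-trivial direction — and the one I expect to be the main obstacle — is (iii) $\Rightarrow$ (ii) under the computable Polish hypothesis. Here we are given an enumeration of a (closed) name of an arbitrary closed set $A$ together with a basic open ball $B_d(\beta, s) \supseteq A$, and we must produce a compact name of $A$. The idea is: since $(\mathcal{M},d)$ is computable Polish, we can compute a closed name (indeed a compact name, via (iii)) of each closed ball $B^\leq_d(\beta, s')$ for rational $s' > s$, so $A$ is contained in an explicitly given compact set $K := B^\leq_d(\beta, s')$. Now $A$ is a closed subset of the compact set $K$, so $A$ is itself compact; the task is to list its finite basic open covers. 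Given a candidate finite cover $\{B_{i_1}, \dots, B_{i_k}\}$, we want to certify $A \subseteq \bigcup_j B_{i_j}$. The plan is to use Fact~\ref{lem:comsub} (names of $A \cap K'$ from a closed name of $A$ and a compact name of a compact $K'$) together with the computable Polish structure: the complement of $\bigcup_j B_{i_j}$ inside $K$ is a $\Pi^0_1$-presented compact set relative to the enumeration of the name of $A$, and $A$ avoids $\bigcup_j B_{i_j}$'s complement exactly when this intersection is empty — and emptiness of a compact set given by a c.e.\ name \emph{is} c.e.\ (the empty cover, or rather a cover by basic sets disjoint from it, appears). Concretely: $A \subseteq \bigcup_j B_{i_j}$ iff $A \cap (K \setminus \bigcup_j B_{i_j}) = \emptyset$; using Fact~\ref{lem:comsub} we get a name for this intersection (it is compact, being closed in $K$), and then its being empty is witnessed by the appearance of the empty tuple (or a finite cover by sets that turn out not to meet it) in that name. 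Enumerating over all finite tuples $\langle i_1, \dots, i_k\rangle$ and waiting for this witness yields the compact name of $A$. The delicate points to get right are: that we really can apply Fact~\ref{lem:comsub} with only an enumeration of the name of $A$ (it is stated for closed names, which is what we have), that emptiness of a compact set with a c.e.\ name is c.e.\ (this uses that the empty family is an allowed "cover" of $\emptyset$, or a careful treatment of the convention in Def.~\ref{def:effcomp1}), and that the whole procedure is uniform. I expect verifying the emptiness-detection step against the precise conventions for names of compact sets to be where the real care is needed.
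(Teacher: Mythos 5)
Your handling of (i) $\Leftrightarrow$ (ii) and of (ii) $\Rightarrow$ (iii) is fine and matches the paper: (ii) $\Rightarrow$ (i), (iii) are immediate, and (i) $\Rightarrow$ (ii) is just an application of Fact~\ref{lem:comsub} (your ``recovery'' argument simply re-proves that fact). You have also correctly located where the computable-Polish hypothesis enters (iii) $\Rightarrow$ (ii): for a computable metric the closed ball $B^{\leq}_d(\beta,s')$ is effectively closed uniformly in its parameters, since its complement is the union of the basic balls $B_d(x,q)$ with $d(\beta,x)>s'+q$, a c.e.\ condition. This is the one-line justification you omit; it is exactly what fails for a merely right-c.e.\ metric, and the paper makes it explicit.

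The gap is in how you finish (iii) $\Rightarrow$ (ii). Your certification of a candidate cover, namely that $A\cap\bigl(K\setminus\bigcup_j B_{i_j}\bigr)=\emptyset$ is ``witnessed by the appearance of the empty tuple (or a finite cover by sets that turn out not to meet it)'', is not a c.e.\ event under the paper's conventions: by Definition~\ref{def:compact} a compact name lists nonempty tuples only, and the basic sets in a listed cover are not required to meet the set, so the empty tuple never appears and ``not meeting it'' cannot be certified from the name (for instance, a compact set contained in every basic ball has the same compact name as $\emptyset$). One could patch this in a computable Polish space (emptiness follows from being covered by two formally disjoint balls, and formal disjointness is c.e.\ when $d$ is computable), but the whole detour is unnecessary: once you have, from the parameters, a closed name for $K=B^{\leq}_d(\beta,s')$ and, via (iii), a compact name for $K$, apply Fact~\ref{lem:comsub} directly to the closed set $A$ and the compact set $K$; since $A\subseteq K$, this lists a compact name for $A\cap K=A$, which is exactly (ii). This one-step finish is the paper's proof; alternatively, replacing your emptiness test by the cover-plus-complement certification that you yourself invoke for (i) $\Rightarrow$ (ii) would also repair the argument.
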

\begin{proof}
(i) $\Rightarrow$ (ii): Apply Fact \ref{lem:comsub}, which holds even if $d$ is not computable. The implications (ii) $\Rightarrow$ (i),(iii) are trivial. If $d$ is computable then assuming (iii) holds, for each basic open basic $B(\alpha,r)$ where $\alpha$ is a special point and $r\in\mathbb{Q}^+$, we have $B_d^{\leq}(\alpha,r)$ is effectively closed.
(This is because a special point $x \notin B_d^{\leq}(\alpha,r)$ together with a $q$-ball $B_d^{<}(x, q)$ iff $ d(\alpha, x)> r+q$, which is c.e.~if the metric is computable, but could be not c.e.~for a right-c.e. metric. In the case of a computable metric we do not have to assume $B^\leq_d(\alpha,r)$ comes together with a closed name, as it is automatically can be reconstructed from its parameters.)
 So given a name for a closed set $A$ and some $\alpha,r$ such that $A\subseteq B_d(\alpha,r)$ we can obtain a name for $B^{\leq}_d(\alpha,r)$ and, therefore, a compact name for $B^{\leq}_d(\alpha,r)$. By Fact \ref{lem:comsub}, we obtain a compact name for $A$.
\end{proof}

The first item in the above lemma is the effective version of the fact that every closed and bounded set is contained in a compact set. The second item corresponds to the fact that every closed and bounded set is compact, while the third item corresponds to the fact that every closed and bounded ball is compact.
An effectively locally compact computable Polish space will satisfy a version of $(iii)$ that says that, for every $x$, \emph{there exists} a sufficiently small $r$ and a special $\alpha$ such that $x \in B^\leq_d(\alpha,r)$ is effectively compact. In \cite{separating} we additionally show that, in such a space, $ B^\leq_d(\alpha,r)$ can be picked computably closed as well.
But of course, to claim that we can compute  a compact name  for $B^\leq_d(\alpha,r)$ for \emph{any} $r, \alpha$ the metric has to be proper at the first place.

\subsection{A unified generalization: represented spaces}
%One way to think about compact sets in a Polish space is as follows. Fix your favourite computable Polish space. Then finite subspaces are dense among compact subsets of the space under the Hausdorff metric.
%Also, an $2^{-n}$-approximation of a compact $K$ by a finite set of special points  can be thought of as the $2^{-n}$-cover of $K$: take the $2^{-n}$-balls with centres in these points.
%Following this analogy, it is sometimes useful to think about compact subsets of a (Hausdorff) space $X$ as points in the `hyperspace' $\mathcal{K}(X)$ of the compact subsets   of %$X$. For instance, a subset of a computable Polish $X$ is computably compact (see~Remark~\ref{strange:remark}) if, and only if, it is a computably point $\mathcal{K}(X)$.
%More generally,
The definition of a compact name Def.~\ref{def:compact} is reminiscent of Def.~\ref{def:point} for points. It is also  somewhat similar to Def.~\ref{def:open} for open sets, but unlike $N^x$ and $N^K$, an open set will typically have lots of names, not just one name.
%Also, recall that the name of a closed set is just the name of its open complement.
Also, the definitions of a computable topological,  a computable Polish, and a right-c.e.~Polish space have a similar flavour too. More specifically, in each case  we can define  names of points and define the notion of a computable map between presentations.

All these notions have some clear similarities and seem to be special instances of something more general.
This intuition can be made formal using the theory of \emph{represented spaces}. We do not need this degree of generality in the present paper.
Indeed,  one of the main goals of our paper is to illustrate that, in the case of Polish groups, we can safely restrict ourselves to the classical theory of effectively metrized spaces without any loss of generality.
We cite \cite{ArnoSurvey} for a detailed exposition of the theory of represented topological spaces.

\section{Effective Birkhoff-Kakutani Theorem}\label{Sec:ebk}
The classical Birkhoff-Kakutani Theorem is the following:
\begin{theorem}[Birkhoff-Kakutani Theorem]\label{thm:bkthm}
Let $G$ be a topological group. Then $G$ is metrizable iff $G$ is Hausdorff and first countable. Moreover, if $G$ is metrizable, then $G$ admits a compatible left-invariant metric.
\end{theorem}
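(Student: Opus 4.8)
The forward implication is routine: every metric space is Hausdorff, and for a metric $d$ the balls $\{y : d(x,y) < 1/n\}$, $n \ge 1$, form a countable neighbourhood base at each point, so a metrizable topological group is automatically Hausdorff and first countable. The plan for the substantive direction is the classical route to Birkhoff--Kakutani: manufacture a left-invariant subadditive \emph{gauge} (group ``norm'') from a carefully chosen neighbourhood base at the identity $e$, and then read off the desired left-invariant metric, using the Hausdorff hypothesis precisely to guarantee that it separates points.

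First I would exploit first countability together with continuity of multiplication and inversion to build a decreasing sequence of \emph{symmetric} open neighbourhoods $G = V_0 \supseteq V_1 \supseteq V_2 \supseteq \cdots$ of $e$ with $V_n = V_n^{-1}$ and the crucial squaring condition $V_{n+1}\,V_{n+1} \subseteq V_n$, arranged so that $(V_n)_n$ is a neighbourhood base at $e$. Each $V_{n+1}$ is obtained by intersecting the $n$-th member of a given countable base at $e$ with a symmetric neighbourhood whose square lies inside $V_n$ (such a neighbourhood exists by continuity of $(x,y)\mapsto xy$ at $(e,e)$ and of inversion). The Hausdorff property then gives $\bigcap_n V_n = \{e\}$.

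Next I would define a gauge $f\colon G \to [0,1]$ by $f(x) = \inf\{2^{-n} : x \in V_n\}$, with $f(e)=0$. Symmetry of the $V_n$ yields $f(x^{-1}) = f(x)$, and the squaring condition yields the \emph{approximate} subadditivity $f(xy) \le 2\bigl(f(x)+f(y)\bigr)$: if $f(x),f(y) \le 2^{-(n+1)}$ then $x,y \in V_{n+1}$, hence $xy \in V_{n+1}V_{n+1} \subseteq V_n$ and $f(xy)\le 2^{-n}$. To eliminate the factor $2$ I would pass to the chain infimum
$$\|x\| = \inf\Bigl\{\, \textstyle\sum_{i=1}^{k} f(x_i) : k \ge 1,\ x = x_1 x_2 \cdots x_k \,\Bigr\},$$
which is subadditive and symmetric by construction. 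The technical heart is the sandwich estimate $\tfrac12 f(x) \le \|x\| \le f(x)$: the upper bound is the trivial one-term chain, while the lower bound is proved by induction on chain length and is exactly the Frink / Alexandroff--Urysohn lemma that upgrades approximate to genuine subadditivity. This sandwich is the step I expect to be the main obstacle, since it is the only place where one must argue that the infimum over chains does not collapse to $0$, and it is precisely where the condition $V_{n+1}V_{n+1}\subseteq V_n$ must be deployed with care.

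Finally I would set $d(x,y) = \|x^{-1}y\|$. Left-invariance is immediate because $(gx)^{-1}(gy) = x^{-1}y$; symmetry and the triangle inequality follow from $\|x^{-1}\|=\|x\|$ and subadditivity. The sandwich estimate gives $\|x\| = 0 \iff x \in \bigcap_n V_n = \{e\}$, so $d$ separates points and is a genuine (not merely pseudo-) metric, which is where the Hausdorff hypothesis enters. For compatibility with the topology I would note that the same sandwich bounds squeeze each ball $\{x : \|x\| < 2^{-n}\}$ between two members $V_{m+1} \subseteq \{x:\|x\|<2^{-m}\} \subseteq V_{m-1}$ of our base; since $(V_n)_n$ is a neighbourhood base at $e$, so are these balls, which forces the $d$-topology to agree with the original one at $e$. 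Translating by left multiplication, which is a homeomorphism and a $d$-isometry, transports this agreement to every point of $G$, completing the proof.
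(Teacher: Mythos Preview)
Your proposal is correct and follows the same classical route that the paper uses (the paper does not prove Theorem~\ref{thm:bkthm} separately but embeds the classical argument inside the proof of the effective Theorem~\ref{thm:effbk}): build a nested symmetric neighbourhood base at $e$, form a dyadic gauge, pass to the chain infimum, and prove a sandwich estimate to obtain a compatible left-invariant metric.

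One technical slip is worth flagging. You impose only the squaring condition $V_{n+1}V_{n+1}\subseteq V_n$, whereas the paper (and the standard form of the Frink/Alexandroff--Urysohn chain lemma you invoke) uses the cubing condition $\mathcal V_{n+1}^{3}\subseteq \mathcal V_n$. The inductive proof of the lower bound $\tfrac12 f(x)\le\|x\|$ splits a chain into \emph{three} pieces and needs the implication ``$f(a),f(b),f(c)\le\varepsilon\Rightarrow f(abc)\le 2\varepsilon$'', which follows from cubing but not from squaring alone; with squaring the induction does not close at constant $\tfrac12$. This is easily repaired---for instance, pass to the subsequence $W_n=V_{2n}$, which satisfies $W_{n+1}^{3}\subseteq W_n$, or simply demand $V_{n+1}^{3}\subseteq V_n$ from the outset as the paper does---so it is a minor inaccuracy rather than a genuine gap. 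Everything else (symmetry, left-invariance, the use of Hausdorff to get $\bigcap_n V_n=\{e\}$, and the compatibility argument via left translation) matches the paper.
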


We consider the effective version of the Birkhoff-Kakuani Theorem and restrict attention to computable topological groups.
We obtain the following effective version of Theorem \ref{thm:bkthm}.

%which is Hausdorff\footnote{We do not require that $G$ is computably Hausdorff.}.
\begin{theorem}\label{thm:effbk}
Let $G$ be a computable topological group. Then $G$ admits a right-c.e.~compatible left-invariant metric. \end{theorem}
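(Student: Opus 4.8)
The plan is to mimic the classical Birkhoff--Kakutani construction, but carried out entirely in the point-free setting of a computable topological group, keeping track of the effectivity at each step. The classical proof has two ingredients: first one builds a neighbourhood basis $(V_n)_{n\in\omega}$ of the identity $e$ that is ``normal'' in the sense that $V_{n+1}V_{n+1}V_{n+1}\subseteq V_n$ and $V_n=V_n^{-1}$; then one defines an auxiliary function $f\colon G\to[0,1]$ by assigning dyadic values based on how deep into this chain of neighbourhoods an element sits, and finally one symmetrises over left translates to obtain the metric $d(g,h)=\sup_{x\in G}|f(x^{-1}g)-f(x^{-1}h)|$, which is left-invariant by construction. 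So first I would, using effective continuity of multiplication and inversion together with the c.e.\ set $W$ witnessing the basis intersections, show that one can \emph{effectively} enumerate a sequence of basic open sets $(V_n)$ with $e\in V_n$, $V_n=V_n^{-1}$, and $V_{n+1}^3\subseteq V_n$, where moreover the relation $V_{n+1}^3\subseteq V_n$ is verified by a c.e.\ computation. Here the key subtlety is that $e$ has no canonical name; one has to work with a c.e.\ set of basic balls known to contain $e$, which is available because $\{i : e\in B_i\}$ is exactly $N^e$ and membership $e\in B_i$ is equivalent to $B_i$ occurring in the effectively open name of, say, $B_j\cdot B_j^{-1}$ for basic $B_j\ni e$ --- more carefully, one uses that $B_i\ni e$ iff $e\in B_i$, detectable since $\{i: e \in B_i\}$ is c.e.\ (it is the name of the open set $\bigcap$-chain around $e$, or simply: $e\in B_i$ iff $(j,j,i)\in W$-closure arguments let us detect products containing $e$). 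Inclusions of the form $V_{n+1}^3\subseteq V_n$ are c.e.-detectable because $V_{n+1}^3$ is effectively open (products of effectively open sets are effectively open, by the Fact on effective openness of multiplication) and ``effectively open $U$ is contained in basic $B_n$'' is c.e.\ whenever $U$ has a c.e.\ name and one can enumerate the name of $B_n$.

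Next I would define, for a basic open $U$ and $n$, a predicate ``$U$ lies at level $\leq k/2^n$'' by the usual dyadic recursion over the chain $(V_n)$, and show that from a name $N^g$ of a point $g$ one can \emph{enumerate the right cut} of $f(g)$, where $f(g)=\inf\{ r \text{ dyadic} : g \in W_r\}$ and $(W_r)$ is the standard increasing family of open sets with $W_{1/2^n}=V_n$ and $W_{(a+1)/2^n} = V_n\cdot W_{a/2^n}$ (roughly). The point is that ``$g\in W_r$'' is a $\Sigma^0_1$ condition relative to the name of $g$ (it asks that some basic ball from $N^g$ be enumerated into the effectively open name of $W_r$), so $f(g)\leq r$ is c.e., giving the right cut of $f(g)$ from $N^g$ --- this is exactly upper semicomputability of $f$. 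The classical estimates $f(xy)\leq f(x)+f(y)$, $f(x^{-1})=f(x)$, and $\tfrac12 f(x)\le \sup\{\text{stuff}\}$ are topological and transfer verbatim.

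Finally I would set $d(g,h)=\sup_{x}|f(x^{-1}g)-f(x^{-1}h)|$ and check left-invariance (immediate) and that $d$ is right-c.e.\ given names $N^g,N^h$: the sup is over all $x\in G$, but by the triangle-type inequalities it equals $\sup$ over $x$ ranging in a dense set, and since $f$ is upper semicomputable and $|f(x^{-1}g)-f(x^{-1}h)|\leq f(g^{-1}h)$ (the bound being attained in the limit), one gets $d(g,h)=\sup_x (\cdots)$ with each term's upper cut enumerable uniformly from $N^g, N^h$ via effective continuity of multiplication; the supremum of a uniformly right-c.e.\ family that is bounded is again right-c.e.\ provided we can also push the approximations from below along a dense set, but for the \emph{right} cut alone we only need: $d(g,h)<q$ should be c.e. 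Here the cleaner route is the standard reduction $d(g,h) = \sup_x |f(x^{-1}g)-f(x^{-1}h)|$ and the classical fact that this equals $\max\big(f(g^{-1}h\cdot\text{-translates})\big)$-type expressions making $d(g,h)\le q$ equivalent to a c.e.\ statement about names; compatibility of $d$ with $\tau$ follows from $V_n$ being a neighbourhood basis exactly as in the classical proof. The main obstacle I expect is the very first step: manufacturing the normal sequence $(V_n)$ effectively \emph{without} a handle on the point $e$ and while only having the weak point-free basis data --- in particular verifying cube-inclusions $V_{n+1}^3 \subseteq V_n$ as c.e.\ events and ensuring the $V_n$ shrink to $\{e\}$ (which needs that we can force $V_n$ inside every basic ball containing $e$, using that $N^e$ is c.e.). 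Everything downstream is a careful but routine effectivisation of Birkhoff--Kakutani once that scaffolding is in place; note completeness of $d$ is deliberately \emph{not} claimed here --- that is deferred to the locally compact and abelian cases via separate classical input.
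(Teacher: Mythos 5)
Your plan diverges from the paper in two places, and both divergences hide genuine gaps. First, in the construction of the normal chain $(V_n)$: you propose to \emph{verify} the cube-inclusions $V_{n+1}^{3}\subseteq V_n$ as c.e.\ events, on the grounds that ``effectively open $U$ is contained in basic $B_n$'' is c.e. This is false in the point-free setting of Definition~\ref{def:compttopsp}: the only positive data available is the set $W$ listing intersections, and neither inclusion between basic balls nor inclusion of an effectively open set in a basic ball is c.e.\ (nor decidable) in general. The paper does not verify such inclusions; it \emph{forces} them by construction: having $\V_n$, it enumerates the preimage of $\V_n$ under the six-fold map $(x_0,y_0,\dots,x_2,y_2)\mapsto\prod_i x_iy_i^{-1}$ and searches for a basic box $B_{i_0}\times\dots\times B_{i_5}$ listed inside it together with a basic $B\subseteq \V_n\cap\bigcap_j B_{i_j}$ (a purely positive, c.e.\ search), and then sets $\V_{n+1}=(B\cdot B^{-1})\cap\U_{n+1}$. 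Note also that the $\V_n$ cannot be taken basic: symmetry $V_n=V_n^{-1}$ is obtained precisely because $B\cdot B^{-1}$ is automatically symmetric and automatically contains $e$ for \emph{any} non-empty basic $B$ — which is also how the paper sidesteps the issue of naming $e$ (your argument that $N^{e}$ is c.e.\ is circular as written, since it quantifies over ``basic $B_j\ni e$'', although the fact itself can be proved by searching $f^{-1}(B_i)$, $f(x,y)=xy^{-1}$, for boxes $U\times V$ with $U\cap V\neq\emptyset$).

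Second, and more seriously, your metric $d(g,h)=\sup_{x}\lvert f(x^{-1}g)-f(x^{-1}h)\rvert$ is the wrong vehicle for right-c.e.-ness. From names you only get the \emph{right} cut of $f$ (it is upper semicomputable), so the absolute difference is not approximable from above, the supremum ranges over uncountably many $x$ (a sup of uniformly right-c.e.\ quantities need not be right-c.e.), and your fallback ``restrict the sup to a dense set'' is unavailable: no computable dense sequence exists at this stage — recovering one is exactly Theorem~\ref{thm:completemet}, proved later and only under completeness. The only c.e.\ witnesses you have are of the form ``$g^{-1}h\in V_n$'', and these enumerate the right cut of $\varrho(g,h)=\inf\{2^{-n}\mid g^{-1}h\in V_n\}$, which can differ from the sup-metric by a multiplicative constant, so they do not give the right cut of your $d$. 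The paper's choice of the chain (infimum-over-paths) metric $d(x,y)=\inf\{\sum_i\varrho(g_i,g_{i+1})\}$ is made precisely to fix this: every rational strictly above $d(x,y)$ is witnessed by a finite chain of overlapping basic balls $B_{p_m},B_{q_m}$ with $B_{p_m}^{-1}\cdot B_{q_m}\subseteq\V_{n_m}$ — a positive c.e.\ event relative to enumerations of $N^x,N^y$ — so the right cut is enumerable exactly. Unless you replace the sup-definition by such an infimum-type definition (or supply a genuinely new argument that the right cut of the sup-metric is c.e.\ from names), the final and central step of your proposal does not go through.
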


\begin{proof}
Let $\{B_{n}\}_{n\in\omega}$ be the effective basis for $G$, where each $B_{n}$ is nonempty.
%In the proof in \cite{GaoSu}, $\{\U_{n}\}$ is picked to be a countable local basis of $e_{G}$.
We extract a local base for $e_{G}$ in the following way. Define
\begin{itemize}
    \item $\U_{0}=G$,
    \item $\U_{n+1}=\U_{n}\cap\left(B_{n}\cdot B_{n}^{-1}\right)$ for $n\geq 0$.
\end{itemize}

Note that each $\U_{n}$ is effectively open, uniformly in $n$. First we check that $\{\U_{n}\}_{n\in\omega}$ is a local base for $e_{G}$. Consider the continuous function $f(x,y)=xy^{-1}$. Then for any open set $U$ containing $e_{G}$, there must be some basic open ball $B_{k}$ s.t.~$f(B_{k},B_{k})\subseteq U$, since $x\cdot x^{-1}=e_{G}$ for any $x\in G$. Thus $\{B_{n}\cdot B_{n}^{-1}\}_{n\in\omega}$ gives a local base for $e_{G}$. Since each $B_n\neq\emptyset$, hence $e_G\in B_{n}\cdot B_{n}^{-1}$ for every $n$ and so it follows that $\{\U_{n}\}_{n\in\omega}$ is also a local base for $e_{G}$.

Next we check that $\U_{n}=\U_{n}^{-1}$ for all $n$. Base case $n=0$ is trivially true, then assume $\U_{n}$ is true for some $n$. Let $x\in \U_{n+1}=\U_{n}\cap\left(B_{n}\cdot B_{n}^{-1}\right)$. By inductive hypothesis, we know that $x^{-1}\in \U_{n}$. Furthermore, since $\left(B_{n}\cdot B_{n}^{-1}\right)^{-1}=\left\{\left(xy^{-1}\right)^{-1}\mid x,y\in B_{n}\right\}=\left\{yx^{-1}\mid x,y\in B_{n}\right\}= B_{n}\cdot B_{n}^{-1}$, then $x^{-1}\in B_{n}\cdot B_{n}^{-1}$, and thus $x^{-1}\in \U_{n+1}$.

Since $G$ is Hausdorff, for any $x\neq e_G$ there is an open set $U$ such that $e_G\in U$ and $x\not\in U$. There is some $n$ such that $\U_n\subseteq U$ which means that $x\not\in \U_n$. So this means that $\bigcap_{n\in\omega}\U_{n}=\{e_{G}\}$.

Now we define $\{\V_{n}\}_{n\in\omega}$ satisfying the following properties for every $n\in\omega$:
\begin{enumerate}
    \item $\V_{0}=G;\V_{n+1}\subseteq \V_{n};$\label{1}
    \item $\V_{n}=\V_{n}^{-1};$\label{2}
    \item $\V_{n+1}^{3}\subseteq \V_{n};$\label{3}
    \item $\V_{n}\subseteq \U_{n}$.\label{4}
\end{enumerate}
Consider the function $f(x_{0},y_{0},x_{1},y_{1},x_{2},y_{2})=\Pi_{i=0}^{2}x_{i}y_{i}^{-1}$. Assume that $\V_n$ has been defined. Search for basis elements $B$ and $B_{i_0},\cdots, B_{i_5}$ satisfying the following
\begin{itemize}
    \item $\langle B_{i_{j}}\rangle_{j<6}\subseteq f^{-1}(\V_{n})$, and
    \item $B\subseteq \V_{n}\cap\bigcap_{j=0}^{5}B_{i_{j}}$.
\end{itemize}
Since $G$ is a computable topological group, multiplication and inverse are effectively continuous operations, and hence, given an index for $\V_{n}$, we can search for an index for
%there is an effective procedure to enumerate its preimage under $f$. Take some basic open ball $\langle B_{i_{j}}\rangle_{j<6}$ in its preimage, (under the product topology in $G\times G\times\dots G$), and return a basic open ball $B$ enumerated by $\V_{n}\cap\bigcap_{j=0}^{5}B_{i_{j}}$.
$B$. Take $\V_{n+1}=(B\cdot B^{-1})\cap \U_{n+1}$, and note that $\V_{n+1}$ is also effectively open, uniformly in $n$. Note also that $B$ must exist, since $e_G\in \V_n$ and $f(e_G,e_G,\cdots,e_G)=e_G$.

The properties \ref{2} and \ref{4} are immediate. By choice of $B$, we see $f(B,B,\dots,B)\subseteq \V_{n}$, and thus $\V_{n+1}^{3}=\{x\cdot y\cdot z\mid x,y,z\in (B\cdot B^{-1})\cap \U_{n+1}\}\subseteq\{x\cdot y\cdot z\mid x,y,z\in (B\cdot B^{-1})\}=f(B,B,\dots,B)\subseteq \V_{n}$. Finally, since $e_{G}\in \V_{n+1}$, then for any $x\in \V_{n+1}$, $x\cdot e_{G}\cdot e_{G}\in \V_{n+1}^{3}$, which gives $\V_{n+1}\subseteq \V_{n+1}^{3}\subseteq \V_{n}$.

Now we define the functions $\varrho,d:G^2\rightarrow \mathbb{R}$ by:
\begin{itemize}
    \item $\varrho(x,y)=\inf\{2^{-n}\mid x^{-1}y\in \V_{n}\}$.
    \item $d(x,y)=\inf\{\sum_{i=0}^{l}\varrho(g_{i},g_{i+1})\mid g_{i}\in G, g_{0}=x,g_{l+1}=y,l\in\omega\}$.
\end{itemize}
Since $\V_0=G$, $\varrho$ and $d$ are total functions.

To verify that $d$ is a metric on $G$, we follow the classical proof (see \cite{GaoSu}) almost exactly. Since for each $n$, $\V_{n}=\V_{n}^{-1}$, then we have that $\varrho(x,y)=\varrho(y,x)$ for any $x,y\in G$, and since $\varrho(gx,gy)=\inf\{2^{-n}\mid (gx)^{-1}(gy)\in \V_{n}\}=\inf\{2^{-n}\mid x^{-1}y\in \V_{n}\}=\varrho(x,y)$, then $d$ must also be both symmetric and left-invariant. It is easy to see that $d(x,x)=0$, and since $\varrho(g,h)=0$ for any $g,h\in G$, then $d(x,y)\geq 0$ for any $x,y\in G$. From the definition of $d$, it is also easy to see that the triangle inequality holds. Thus it remains to check that $d(x,y)=0$ only if $x=y$.

We prove the following by induction
\begin{equation*}
    \sum_{i=0}^{l}\varrho(g_{i},g_{i+1})\geq\frac{1}{2}\varrho(g_{0},g_{l+1})
\end{equation*}
for any $g_{0},g_{1},\dots,g_{l+1}\in G$. First note that by property \ref{3} of $\{\V_{n}\}_{n\in\omega}$, $\varrho$ has the following property
\begin{equation}\tag{*}\label{*}
    \forall\varepsilon>0,\text{ if }\varrho(g_{0},g_{1}),\varrho(g_{1},g_{2}),\varrho(g_{2},g_{3})\leq\varepsilon,\text{ then }\varrho(g_{0},g_{3})\leq 2\varepsilon.
\end{equation}
Then when $l\leq 2$, the proposition follows directly from (\ref{*}). Suppose that the proposition holds for all $l'<l$ for some $l$. Let $S=\sum_{i=0}^{l}\varrho(g_{i},g_{i+1})$, and $m$ be the largest (possibly $m=1$) s.t. $\sum_{i=0}^{m-1}\varrho(g_{i},g_{i+1})\leq\frac{1}{2}S$. By the inductive hypothesis, $\varrho(g_{0},g_{m})\leq 2\sum_{i=0}^{m-1}\varrho(g_{i},g_{i+1})\leq S$. Since $\sum_{i=0}^{m}\varrho(g_{i},g_{i+1})>\frac{1}{2}S$, then $\sum_{i=m+1}^{l}\varrho(g_{i},g_{i+1})\leq\frac{1}{2}S$, then by inductive hypothesis again, we have $\varrho(g_{m+1},g_{l+1})\leq S$. But clearly $\varrho(g_{m},g_{m+1})\leq S$, that is $\varrho(g_{0},g_{m}),$ $\varrho(g_{m},g_{m+1}),\varrho(g_{m+1},g_{l+1})\leq S$, and hence by (\ref{*}), $\varrho(g_{0},g_{l+1})\leq 2S$. Then if $d(x,y)=0$, it must be that $\frac{1}{2}\varrho(x,y)=0$ and this is only the case when $x^{-1}y=e_{G}$, i.e. $x=y$.

Now we check that $d$ is compatible with the topology of $G$. Let $U$ be open in $G$ and $g\in U$. Then for some $n\in\mathbb{N}$, $g\V_{n}\subseteq U$. We check that $B_{d}(g,2^{-n-1})\subseteq U$. Let $h\in B_{d}(g,2^{-n-1})$, then $d(h,g)<2^{-n-1}$. By the claim above, $\varrho(g,h)\leq 2d(g,h)<2^{-n}$, and by the definition of $\varrho$, $g^{-1}h\in \V_{n}$, thus $h\in g\V_{n}\subseteq U$. Conversely, let $U$ be open in the topology given by $d$ and let $g\in U$. For some $n\in\mathbb{N}$, we have that $B_{d}(g,2^{-n})\subseteq U$. We check that $g\V_{n+1}\subseteq U$. Let $h\in g\V_{n+1}$, then $\varrho(g,h)\leq 2^{-n-1}$, and by definition of $d$, $d(g,h)\leq \varrho(g,h)\leq 2^{-n-1}<2^{-n}$, therefore $h\in B_{d}(g,2^{-n})\subseteq U$.

Now we check that the right cut of $d(x,y)$ can be enumerated given an enumeration of $N^x$ and $N^y$.
List out all finite tuples, and search for $l+1$-tuples $\langle p_{m}\rangle_{m\leq l}$, $\langle q_{m}\rangle_{m\leq l}$ and $\langle n_{m}\rangle_{m\leq l}$ such that
the sequence $(B_{p_{0}},B_{p_{1}},\dots,B_{p_{l}})$ and $(B_{q_{0}},B_{q_{1}},\dots,B_{q_{l}})$ satisfy
\begin{itemize}
    \item $B_{p_{0}}$ is enumerated in $N^{x}$,
    \item $B_{q_{l}}$ is enumerated in $N^{y}$,
    \item $B_{p_{m}}^{-1}\cdot B_{q_{m}}\subseteq\V_{n_{m}}$ for each $m\leq  l$, and
    \item $B_{q_{m}}\cap B_{p_{m+1}}\neq\emptyset$ for each $m<l$.
\end{itemize}
Note that the third condition is c.e. and implies that  $(B_{p_{m}},B_{q_{m}})\subseteq\varrho^{-1}\left([0,2^{-n_{m}}]\right)$. If a suitable $\langle n_{m}\rangle_{m<l}$ is found, enumerate $\sum_{j=0}^{l}2^{-n_{j}}$ into our approximation to the right cut of $d(x,y)$.

%and $B_{i_{j}}^{-1}\cdot B_{k_{j}}\subseteq\V_{n_{j}}$ for all $j\leq l$

Now we verify that the procedure described above in fact does enumerate the right cut of $d(x,y)$. Let $q=\sum_{j=0}^{l}2^{-n_{j}}$ be enumerated by the procedure at some stage. This means that some $\langle n_{j}\rangle_{j\leq l}$ and corresponding $(B_{i_{0}},B_{i_{1}},\dots,B_{i_{l}})$ and  $(B_{k_{0}},B_{k_{1}},\dots,B_{k_{l}})$ have been found where $x\in B_{i_{0}}$ and $y\in B_{k_{l}}$. Now for each $j<l$, let $g_{j}\in B_{k_{j}}\cap B_{i_{j+1}}$. Then we have:
\begin{itemize}
    \item $x^{-1}g_{0}\in \V_{n_{0}}$,
    \item $g_{j}^{-1}g_{j+1}\in \V_{n_{j+1}}$ for each $j<l-1$, and
    \item $g_{l-1}^{-1}y\in \V_{n_{l}}$.
\end{itemize}
Since $d(x,y)=\inf\{\sum_{i=0}^{l}\varrho(g_{i},g_{i+1})\mid g_{i}\in G, g_{0}=x,g_{l+1}=y,l\in\omega\}$, so we have $d(x,y)\leq \sum_{i=0}^{l}\varrho(g_{i},g_{i+1})\leq\sum_{j=0}^{l}2^{-n_{j}}=q$.

Now to check that the procedure enumerates some rational $q$ s.t $q<d(x,y)+\varepsilon$ for each $\varepsilon>0$. We can assume that $x\neq y$, since if $x=y$, it can be easily seen that the procedure described before enumerates the right cut of $0$. Let $g_{i}\in G$ for $i\leq l+1$ where $g_{0}=x,g_{l+1}=y$ be given s.t. $\sum_{i=0}^{l}\varrho(g_{i},g_{i+1})<d(x,y)+\varepsilon$, for some $\varepsilon>0$. We may of course assume that $\varrho(g_{i},g_{i+1})>0$ for each $i$, since $\varrho(g_{i},g_{i+1})=0$ iff $g_{i}=g_{i+1}$. Hence $\varrho(g_{i},g_{i+1})=2^{-n_{i}}$ for some $n_{i}$. At some stage  $(B_{i_{0}},B_{i_{1}},\dots,B_{i_{l}})$ and $(B_{k_{1}},B_{k_{2}},\dots,B_{k_{l}})$  must be found satisfying the conditions above and we enumerate $\sum_{i=0}^{l}2^{-n_{i}}$ into the right cut of $d(x,y)$. Thus the procedure enumerates rationals arbitrarily close to $d(x,y)$.
\end{proof}

%\section{Metrization via special points}

\section{Computing a dense sequence}\label{Sec:ebkpoints}

In Theorem \ref{thm:effbk} we produced a compatible right-c.e.~metric for any given Hausdorff computable topological group. However the effectivization was point-free, in the sense of lacking a countable dense subset of points. Perhaps unexpectedly, if we \emph{assume} the metric that we produce is actually complete, then we can
show that the metric admits a dense computable sequence of points. In other words, in this case we obtain a right-c.e.~Polish presentation of the group.

\begin{theorem}\label{thm:completemet}\rm
Let $G$ be computable topological group where the metric $d$ produced in Theorem \ref{thm:effbk} is complete. Then $G$ has  a right-c.e.~Polish presentation.\end{theorem}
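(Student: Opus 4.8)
The plan is to construct, from the point-free data of Theorem~\ref{thm:effbk}, a concrete countable sequence $(x_i)_{i\in\omega}$ of points of $G$ that is dense in the metric $d$ and whose pairwise $d$-distances are uniformly right-c.e. Since the metric $d$ is already right-c.e.\ on \emph{names} of points (that is the conclusion of Theorem~\ref{thm:effbk}), the real work is to manufacture points together with \emph{enumerations of their names} $N^{x_i}=\{k : x_i\in B_k\}$, in such a way that the density requirement is met. The natural candidates for these points are limits of fast-shrinking nested sequences of basic open sets: we pick a computable sequence of finite ``addresses'', each address being a descending chain $B_{k_0}\supseteq \overline{B_{k_1}}? \ldots$ of basic balls whose $\V_n$-controlled diameters go to zero, and whose intersection is a single point by completeness of $d$. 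The index $i$ of $x_i$ will code such an address.

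**Carrying it out.**
First I would use the local base $\{\V_n\}$ from the proof of Theorem~\ref{thm:effbk} to define a notion of ``$d$-diameter $\le 2^{-n}$'' for a basic open set: call $B_k$ \emph{$n$-small at $B_j$} if $B_j\subseteq B_k$ and there is a witness (found by the effective continuity operators) that $B_k^{-1}\cdot B_k\subseteq \V_n$, equivalently that all points of $B_k$ lie within $d$-distance $2^{-n}$ of each other via the $\varrho$-estimate used in Theorem~\ref{thm:effbk}. Because $\{\V_n\}$ is a local base and multiplication/inversion are effectively continuous, for any nonempty basic $B_j$ and any $n$ we can \emph{search and find} a nonempty basic $B_k\subseteq B_j$ that is $n$-small: pick a point $g\in B_j$, then $g^{-1}g=e_G\in\V_n$, so some basic ball around $g$ works, and effective continuity lets us certify it. Iterating, I build a computable tree of finite descending chains $\sigma=(B_{k_0},B_{k_1},\dots,B_{k_m})$ with $B_{k_{t}}$ being $t$-small and $B_{k_{t+1}}\subseteq B_{k_t}$; each infinite branch has $\bigcap_t B_{k_t}$ a $d$-Cauchy filter, hence (by completeness of $d$, using that $d$ is compatible with $\tau$) converges to a unique point. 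I take $(x_i)$ to be the points named by a computable enumeration of all finite chains, reading off $x_\sigma$ as ``the point of the canonical completion of $\sigma$'': concretely, $x_i$ is defined only once we commit to one fixed computable way of extending each finite chain to an infinite one. The name $N^{x_i}$ is then enumerated by: put $k$ into $N^{x_i}$ whenever the intersection operator $W$ certifies $B_{k_t}\subseteq B_k$ for some $t$ along the (computably generated) infinite extension — this is c.e.\ uniformly in $i$.

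**Density and the distance matrix.**
Density is immediate from the construction: given any nonempty open $U$ and any $g\in U$, pick basic $B_j\ni g$ with $B_j\subseteq U$; the tree contains a chain starting at an $n$-small refinement of $B_j$ for every $n$, so some $x_i$ lies in $B_j\subseteq U$ with $d(x_i,g)$ as small as we like — and in fact the chains give us $x_i$ with $d(x_i,g)\le 2^{-n}$. For the distance matrix: given indices $i,i'$, we have c.e.\ enumerations of $N^{x_i}$ and $N^{x_{i'}}$ by the previous paragraph, so we feed these into the enumeration functional $\Phi$ of Theorem~\ref{thm:effbk} to enumerate the right cut of $d(x_i,x_{i'})$, uniformly. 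Finally, left-invariance of $d$ is inherited from Theorem~\ref{thm:effbk}, and completeness is the hypothesis; so $(G,d,(x_i))$ is a right-c.e.\ Polish presentation. One must also check the presentation is effectively compatible with the original computable topology — but that follows because each basic ball $B_d(x_i,2^{-n})$ is effectively open in $\tau$ (its name is enumerated from $N^{x_i}$ and the $\V$-data) and, conversely, each $B_k$ is a union of such balls by compatibility of $d$ with $\tau$ established in Theorem~\ref{thm:effbk}.

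**Main obstacle.**
The delicate point is the interface between ``point-free'' and ``with points'': I never have an actual point of $G$ in hand — only nested basic open sets — so I must be careful that the infinite chain I commit to really does determine a \emph{single} point, and that this relies squarely on completeness of $d$ (without it the intersection could be empty). The subtlety is that $B_{k_{t+1}}\subseteq B_{k_t}$ with $n$-small diameters only guarantees the \emph{closures} shrink in $d$-diameter and nest, so the intersection of closures is a single point $x$ by completeness; one then argues $x$ lies in every $B_{k_t}$ (not merely its closure) by choosing the chain slightly more carefully — e.g.\ insisting $\overline{B_{k_{t+1}}}\subseteq B_{k_t}$, which can be arranged using a basic ball of strictly smaller $\varrho$-radius. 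Making this shrinking-and-nesting bookkeeping fully effective, while keeping the enumeration of $N^{x_i}$ c.e., is the part that requires genuine care; everything else is routine tracing through the operators already built in Theorem~\ref{thm:effbk}.
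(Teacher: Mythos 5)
Your overall architecture --- nested chains of basic open sets with $\V_n$-controlled diameters, limit points obtained from completeness of $d$, one point per basic set, density, then right-c.e.\ distances and effective compatibility --- is the same as the paper's. But there is a genuine gap at the central step, namely your claim that the names $N^{x_i}$ with respect to the original topology $\tau$ are c.e.\ and can simply be fed into the enumeration functional of Theorem~\ref{thm:effbk}. Your rule is to put $k$ into $N^{x_i}$ whenever the operator $W$ ``certifies'' $B_{k_t}\subseteq B_k$. In a computable topological space the inclusion relation between basic open sets is not c.e.: $W$ only lists some basic sets whose union is $B_{k_t}\cap B_k$, and a true inclusion $B_{k_t}\subseteq B_k$ need never be witnessed by such a listing. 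Consequently the set you enumerate may be a proper subset of $N^{x_i}$; feeding a partial name into the Theorem~\ref{thm:effbk} procedure produces only correct upper bounds and may enumerate the right cut of a real strictly larger than $d(x_i,x_j)$, so neither your distance matrix nor your compatibility claim is justified as stated. A related slip: your chains only guarantee $x_i\in\overline{B_{k_t}}$, and the proposed repair (``a basic ball of strictly smaller $\varrho$-radius'') has no meaning for abstract basic sets, which are not balls and have no centres; so even membership of $x_i$ in the chain elements themselves, let alone in an arbitrary $B_k\ni x_i$, is not certified by the mechanism you describe. (Your density argument has the same closure-versus-open slip, though there it is harmless and fixed by regularity, as in the paper.)

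The paper's proof is organized precisely to sidestep this point (see Remark~\ref{rem:afterlemma}): it never claims at the construction stage that the limit points $\alpha_i$ are computable points of $(G,\tau)$. It records only the weaker, genuinely computable datum $\varphi(i,s)$, an index of a basic set all of whose elements lie within $2^{-s}$ of $\alpha_i$ (immediate from property (2) of Lemma~\ref{point-free}); it proves effective compatibility of $\tau$ and $\tau_d$ from that alone (Lemma~\ref{lemma:effcom}, whose proof uses the group operations to find $X,Y,Z$ with $X\cdot Y\cdot Z^{-1}\subseteq B_j$ and thereby converts ``being near $\alpha_i$'' into certified containments); and it gets right-c.e.\ values $d(\alpha_i,\alpha_j)$ not by feeding names into the functional but by modifying the enumeration procedure of Theorem~\ref{thm:effbk}, requiring the first and last sets of the chain to meet $B_{\varphi(i,s+1)}$ and $B_{\varphi(j,s+1)}$ and adding the slack term $2^{-s}$. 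Only after effective compatibility is in place does it follow, a posteriori, that the $\alpha_i$ are uniformly computable points of $\tau$. Your plan can be repaired, but the repair amounts to redoing this group-theoretic certification (showing, say, $B_{k_t}\subseteq X\cdot\V_{n}\subseteq B_k$ for suitably found $X$ and $n$), not the $W$-certification you propose.
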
 %effectively compatible with the induced right-c.e.~Polish space. \end{theorem}

Most of the rest of the section is devoted to the proof of the theorem.  We begin with several technical lemmas that establishes several useful properties of the metric produced in the proof of Theorem~\ref{thm:effbk}.

\subsection{Two technical lemmas}

%We could require a stronger effectivization in which $G$ itself is a subspace of a computable Polish space $\overline{\mathcal{M}}$ with a countable dense set of special points $\mathcal{M}=(\{\alpha_i\}_{i\in\omega},d)$ chosen from $G$, i.e. $\mathcal{M}\subseteq G\subseteq\overline{\mathcal{M}}$.

Let $\mathcal{M}=(\{\alpha_i\}_{i\in\omega},d)$ be a countable metric space and $\overline{\mathcal{M}}$ be its completion. Let $\tau_d$ be the topology on $\overline{\mathcal{M}}$ generated by the metric $d$ with basis elements $B_d(\alpha_i,\varepsilon)$ where $i\in\omega$ and $\varepsilon\in\mathbb{Q}^+$.
Note these balls also form a base of the restricted topology on $\mathcal{M}$.
We say that
a computable topological space $G$ (we mainly care about topological groups) %$\left(G,\tau,\mathcal{B}\right)$
 is  \emph{effectively compatible} with $\mathcal{M}$ if $\mathcal{M}\subseteq G\subseteq \overline{\mathcal{M}}$ and where $\tau$ and $\tau_d$ (restricted as the subspace topology) are effectively compatible on $G$.

 In the definitions above,
we do not require $\tau_0$ or $\tau_1$ to be computable topologies, even though our topologies will typically be computable; we discussed relativization in \S\ref{subs:rel}. Also, we  do not restrict the complexity of $d$, even though we are interested in right-c.e.~metrics which induce a computable topology. The technical reason why we need this extra degree of generality will be explained in Remark~\ref{rem:afterlemma} shortly. Recall that all our groups are Hausdorff.

%Now we can state our question formally:
%\begin{question}\label{qmn}
%Is every computable topological group Polish  group effectively compatible with some right-c.e.~metric space?
%\end{question}

%We do not know the answer to Question \ref{qmn} but we can answer it positively in certain situations.
%We can answer Question \ref{qmn} positively in certain important cases.

\begin{lemma}\label{lemma:effcom}\rm
Let $(G,\tau,\mathcal{B})$ be a computable %$T_{0}$
topological group and let $d$ be the metric produced in Theorem \ref{thm:effbk}. Suppose that $G$ contains a dense set of points $\{\alpha_{i}\}_{i\in\omega}$ w.r.t.~$\tau$, and there is a computable function $\varphi$ such that for every $i,s$ we have $d(\alpha_{i},g)\leq 2^{-s}$ for any $g\in B_{\varphi(i,s)}\in\mathcal{B}$. Then $G$ is effectively compatible with $(\{\alpha_i\}_{i\in\omega},d)$.

\end{lemma}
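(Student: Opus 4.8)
The plan is to show both directions of effective compatibility, i.e., that each basic ball $B_d(\alpha_i,\varepsilon)$ of $\tau_d$ is effectively $\tau$-open (uniformly in $i$ and a rational $\varepsilon$), and conversely that each $B_n\in\mathcal{B}$ is effectively $\tau_d$-open (uniformly in $n$). Throughout I will use freely the fact established in Theorem~\ref{thm:effbk} that $d$ is right-c.e.\ when we feed in names $N^x,N^y$ of points, together with the explicit local base $\{\mathcal{V}_n\}$ and the key estimate $B_d(g,2^{-n-1})\subseteq g\mathcal{V}_n$ and $g\mathcal{V}_{n+1}\subseteq B_d(g,2^{-n})$ from the compatibility argument there. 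I also have at hand the hypothesis giving the computable function $\varphi$ with $B_{\varphi(i,s)}\ni$ only points within $d$-distance $2^{-s}$ of $\alpha_i$.

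\textbf{From $\tau_d$ to $\tau$.} I want to enumerate, given $(i,\varepsilon)$, a $\tau$-name for $B_d(\alpha_i,\varepsilon)$, i.e.\ a c.e.\ list of basic open $B_n\in\mathcal{B}$ with $B_n\subseteq B_d(\alpha_i,\varepsilon)$ whose union is all of $B_d(\alpha_i,\varepsilon)$. The idea: search for pairs $(j,s)$ and an index $n$ such that $B_n\subseteq B_{\varphi(j,s)}$ (a c.e.\ condition, via the intersection set $W$ of the computable topology, checking $B_n\cap B_{\varphi(j,s)}\supseteq B_n$) and such that the right-c.e.\ approximation to $d(\alpha_i,\alpha_j)$ drops below some rational $q$ with $q+2^{-s}<\varepsilon$. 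When found, enumerate $B_n$ into the name. Soundness: every $g\in B_n$ satisfies $d(\alpha_i,g)\le d(\alpha_i,\alpha_j)+d(\alpha_j,g)<q+2^{-s}<\varepsilon$, so $B_n\subseteq B_d(\alpha_i,\varepsilon)$. Completeness: given any $g\in B_d(\alpha_i,\varepsilon)$, pick $s$ large and, using density of $\{\alpha_j\}$ in $\tau$ together with the fact that $g$ has a $\tau_d$-neighbourhood inside $B_d(\alpha_i,\varepsilon)$, find $\alpha_j$ with $d(\alpha_i,\alpha_j)$ small enough and $g\in B_{\varphi(j,s)}$; one must be slightly careful that $g\in B_{\varphi(j,s)}$ is arranged — this uses that $B_{\varphi(j,s)}$ is a genuine $\tau$-neighbourhood of $\alpha_j$ of small $d$-radius and that such neighbourhoods of the $\alpha_j$'s form a neighbourhood base at $g$, which in turn follows from $\tau$ being second-countable with the $\alpha_j$ dense plus the $d$-radius control. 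Then $B_{\varphi(j,s)}$ itself (or a small $B_n$ inside it containing $g$) gets enumerated.

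\textbf{From $\tau$ to $\tau_d$.} Given $n$, I want a c.e.\ list of $d$-balls $B_d(\alpha_j,\delta)$ with $B_d(\alpha_j,\delta)\subseteq B_n$ covering $B_n$. Here I exploit that $d$ is compatible with $\tau$ (proved in Theorem~\ref{thm:effbk}) so $B_n$ is genuinely $\tau_d$-open. Search for $(j,s,k)$ with: $B_k\subseteq B_n$ and $B_k\subseteq B_{\varphi(j,s)}$ — wait, more robustly: search for $(j,s)$ such that the right-c.e.\ data certifies $\alpha_j$ lies ``well inside'' $B_n$ in the metric sense. Concretely, find $k$ with $\alpha_j\in B_k$, $B_k\subseteq B_n$, and — using the local base $\mathcal{V}$ translated by $\alpha_j$ — a rational $\delta=2^{-m-1}$ and a witness that $\alpha_j\mathcal{V}_m\subseteq B_n$; the containment $\alpha_j\mathcal{V}_m\subseteq B_n$ is an effectively-open-inside-effectively-open relation which is c.e.\ since $\mathcal{V}_m$ is effectively open and $\mathcal{B}$ has c.e.\ intersection data, and then $B_d(\alpha_j,\delta)\subseteq \alpha_j\mathcal{V}_m\subseteq B_n$ by the estimate from Theorem~\ref{thm:effbk}. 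Enumerate such $B_d(\alpha_j,\delta)$. Soundness is the displayed inclusion; completeness is: any $g\in B_n$ has, by compatibility of $d$, some $B_d(g,2^{-m})\subseteq B_n$, hence (shrinking) $B_d(\alpha_j,\delta)$ for an $\alpha_j$ close to $g$ in $\tau$ — and such $\alpha_j$ with the right metric control is found because $\{\alpha_j\}$ is $\tau$-dense and $\varphi$ pins the $d$-distance.

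\textbf{Main obstacle.} The genuinely delicate point is the completeness direction in the first half: translating ``$g\in B_d(\alpha_i,\varepsilon)$'' into the \emph{discrete, c.e.-checkable} data ``there exist $j,s,n$ with $B_n\subseteq B_{\varphi(j,s)}$, $g\in B_n$, and $d(\alpha_i,\alpha_j)<q$'' requires knowing that the sets $B_{\varphi(j,s)}$ (as $j$ ranges over the dense set and $s$ over $\omega$) actually form a $\tau$-neighbourhood base at every point $g$ and simultaneously have vanishing $d$-diameter around $\alpha_j$ — i.e.\ that $\varphi$ is not just giving \emph{some} small-diameter neighbourhood of $\alpha_j$ but that collectively these exhaust a base. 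This is where the interplay between the abstract $\tau$ (second-countable, point-free) and the metric $d$ must be handled carefully, and it is the reason the lemma is stated with the hypothesis on $\varphi$ rather than something weaker; I would isolate this as a sublemma: $\{B_{\varphi(j,s)}: j,s\in\omega\}$ together with $\mathcal{B}$-refinements forms a base for $\tau$, using density of $\{\alpha_j\}$ and the fact that every point of $G$ has arbitrarily $d$-small $\tau$-neighbourhoods (again from compatibility of $d$ with $\tau$). Everything else is bookkeeping with the c.e.\ intersection relation $W$ and the right-c.e.\ approximation to $d$.
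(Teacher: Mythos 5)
Your top-level plan (each $d$-ball is effectively $\tau$-open, each basic $\tau$-set is effectively $\tau_d$-open) matches the paper's, but the implementation has several genuine gaps. First, in the direction from $\tau_d$ to $\tau$ you invoke ``the right-c.e.\ approximation to $d(\alpha_i,\alpha_j)$''. Under the hypotheses of the lemma this is not available: the right-c.e.-ness of $d$ from Theorem~\ref{thm:effbk} requires enumerations of the names $N^{\alpha_i},N^{\alpha_j}$, and the points $\alpha_i$ are \emph{not} assumed to be computable points of $\tau$ (this is exactly the point of Remark~\ref{rem:afterlemma}; in the intended application in Theorem~\ref{thm:completemet} their computability is only deduced \emph{after} this lemma). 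All you have is $\varphi$. Second, your completeness argument rests on the sublemma that the sets $B_{\varphi(j,s)}$ (with refinements inside them) form a base of $\tau$. The hypothesis only says that $B_{\varphi(j,s)}$ is \emph{some} basic set on which $d(\alpha_j,\cdot)\le 2^{-s}$; it need not contain $\alpha_j$, and the family $\{B_{\varphi(j,s)}\}$ can easily avoid a fixed point of $G$ altogether (e.g.\ in $\mathbb{R}$ one can always choose a small rational interval near $\alpha_j$ missing a prescribed irrational), so the sublemma is false in general and a point $g\in B_d(\alpha_i,\varepsilon)$ may never be covered by your enumeration. The paper avoids both problems by re-running the chain definition of $d$: it enumerates an arbitrary basic set $B_{q_l}$ as soon as there is a finite chain of basic sets, anchored by intersecting $B_{\varphi(i,s)}$, with consecutive $\varrho$-smallness certified by the c.e.\ conditions $B_{p_m}^{-1}B_{q_m}\subseteq\V_{n_m}$ and $B_{q_m}\cap B_{p_{m+1}}\neq\emptyset$ and total weight $<r$; completeness then follows from the very definition of $d$ as an infimum over chains of points, with no base property of the $B_{\varphi(j,s)}$ needed.

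Third, several conditions you declare c.e.\ are not. ``$B_n\subseteq B_{\varphi(j,s)}$'' is not c.e.: the set $W$ only lets you list basic pieces of the intersection, and verifying that they exhaust $B_n$ is a covering condition, not an enumeration condition (this is precisely why c.e.\ formal inclusion is a nontrivial extra hypothesis, cf.\ Remark~\ref{rem:formal}). Likewise, in the direction from $\tau$ to $\tau_d$, ``$\alpha_j\in B_k$'' is not effectively recognisable (again the $\alpha_j$ are not computable points, and need not even lie in $B_{\varphi(j,s)}$), and the claim that ``$\alpha_j\V_m\subseteq B_n$ is an effectively-open-inside-effectively-open relation which is c.e.'' is false: inclusion between effectively open sets is not c.e.\ in general, and you cannot even enumerate $\alpha_j\V_m$ without a name for $\alpha_j$. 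The paper's trick is to obtain the needed containment as a c.e.\ event: it enumerates $f^{-1}(B_j)$ for $f(x,y,z)=x(yz^{-1})$ and waits for a basic box $X\times Y\times Z$ to appear (so $X\cdot Y\cdot Z^{-1}\subseteq B_j$ is witnessed), locates a point of $X$ near $\alpha_i$ merely through the c.e.\ condition $X\cap B_{\varphi(i,s)}\neq\emptyset$, picks $n$ with $B_n\subseteq Y\cap Z$ found via the intersection data, and then outputs the ball $B_d(\alpha_i,2^{-n-1}-2^{-s})$, whose correctness follows from $B_d(g,2^{-n-1})\subseteq g\V_n$ for the witnessing $g\in X\cap B_{\varphi(i,s)}$. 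You would need to replace your inclusion and membership tests by witnesses of this kind for the argument to go through.
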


\begin{remark}\rm\label{rem:afterlemma}
Even though $d$ is a right-c.e.~point-free metric defined on $G$, however, since $\{\alpha_i\}_{i\in\omega}$ need not be computable points w.r.t.~$\tau$, it is not immediately obvious why $d(\alpha_i,\alpha_j)$ is right-c.e.~uniformly in $i,j$.~This is in fact true, but we will not need it here (yet). We shall revisit this  later. %in Theorem \ref{thm:completemet}.
\end{remark}

\begin{proof}[Proof of Lemma~\ref{lemma:effcom}]
%Denote the basic open sets of the original topology and the topology generated by the metric as $\tau$ and $\tau_{d}$ respectively.
Since $\{\alpha_{i}\}_{i\in\omega}$ is dense with respect to $\tau$, it is also dense in $G$ with respect to $d$ since $\tau$ and $\tau_d$ are compatible as shown in Theorem \ref{thm:effbk}. Thus $G\subseteq \overline{(\{\alpha_i\}_{i\in\omega},d)}$.

Given $B_{d}(\alpha_{i},r)$, where $r\in\mathbb{Q}^{+}$, we want to effectively (in $i$ and $r$) produce a name for $U=B_{d}(\alpha_{i},r)$ with respect to  $\tau$. For each $s$ where $2^{-s}<r$, list all finite tuples and search for $\langle n_{m}\rangle_{m\leq l},\langle p_{m}\rangle_{m\leq l}$ and $\langle q_{m}\rangle_{m\leq l}$ s.t. the sequences $(B_{p_{0}},B_{p_{1}},\dots,B_{p_{l}})$ and $(B_{q_{0}},B_{q_{1}},\dots,B_{q_{l}})$ satisfying
\begin{itemize}
    \item $B_{p_{m}}^{-1}\cdot B_{q_{m}}\subseteq\V_{n_{m}}$ for each $m\leq l$,
    \item $B_{q_{m}}\cap B_{p_{m+1}}\neq\emptyset$ for each $m<l$,
    \item $B_{p_{0}}\cap B_{\varphi(i,s)}\neq\emptyset$, and
    \item $2^{-s}+\sum_{m\leq l}2^{-n_{m}}<r$.
\end{itemize}
If such sequences are found, enumerate %$B_{p_{m}}$ and
$B_{q_{l}}$ into the name for $U$.
% for each $m\leq l$.

We follow a similar argument in the proof of Theorem \ref{thm:effbk}. It is clear that whenever a sequence satisfying the above properties is found, we have the property that for any $g\in B_{q_{l}}$, $d(g,\alpha_{i})<r$. Thus $B_{q_l}\subseteq U$. Conversely, for any $g\in U=B_{d}(\alpha_{i},r)$, there must be some sequence which witnesses that $d(\alpha_{i},g)<r$, then at some stage we must find it and thus enumerate a set containing $g$ into the name of $U$. Thus $U$ is effectively open with respect to $\tau$.

Now given a basic open set $B_j\in\tau$, we produce a c.e. name for $B_j$ w.r.t. $\tau_{d}$. Consider the function $f(x,y,z)=x\cdot(y\cdot z^{-1})$. Since $f$ is effectively continuous w.r.t. $\tau$, we can effectively enumerate $f^{-1}(B_j)$. Search for $\tau$-basic open sets $X,Y,Z$ s.t.
\begin{itemize}
    \item $Y\cap Z\neq\emptyset$,
    \item $X\cap B_{\varphi(i,s)}\neq\emptyset$ for some $i$, and $s>n+2$ where $n$ is found so that $B_{n}\subseteq Y\cap Z$, and
    \item $X\cdot Y\cdot Z^{-1}\subseteq B_j$.
\end{itemize}
For each $X,Y,Z$ and $i,s,n$ found satisfying the above, we enumerate the ball $B_{d}(\alpha_{i},2^{-n-1}-2^{-s})$ into the name for $B_j$ w.r.t. $\tau_{d}$.

Suppose $X,Y,Z,i,s,n$ are found by the procedure above. Since $B_{n}\subseteq Y\cap Z$, we have that $X\cdot(B_{n}\cdot B_{n}^{-1})\subseteq B_j$, and hence for any $g\in X$, $g\V_{n}\subseteq B_j$, since $\V_{n}\subseteq B_{n}\cdot B_{n}^{-1}$. Since $X\cap B_{\varphi(i,s)}\neq\emptyset$, we can fix $g\in X\cap B_{\varphi(i,s)}$, and so $d(\alpha_{i},g)\leq 2^{-s}<2^{-n-2}$. As a result, for any $h\in B_{d}(\alpha_{i},2^{-n-1}-2^{-s})$, $d(g,h)\leq d(g,\alpha_{i})+d(\alpha_{i},h)<2^{-s}+2^{-n-1}-2^{-s}=2^{-n-1}$, that is $h\in B_{d}(g,2^{-n-1})$. However from the classical proof of compatibility (see the proof of Theorem \ref{thm:effbk}), we know that since $g\V_{n}\subseteq B_j$, then $B_{d}(g,2^{-n-1})\subseteq B_j$. Therefore we conclude that $B_{d}(\alpha_{i},2^{-n-1}-2^{-s})\subseteq B_j$.

Now conversely fix some $g\in B_j$. There are $\tau$-basic open sets $X,Y,Z$ such that $X\cdot Y\cdot Z^{-1}\subseteq B_j$ such that $g\in X$ and $e\in Y\cap Z$. Now fix any $n$ (such that $B_n\subseteq Y\cap Z$). Since $\tau$ and $\tau_d$ are compatible, and $\{\alpha_{i}\}_{i\in\omega}$ is dense in $G$ wrt $\tau_d$, we can pick some $i$ and $s>n+2$ such that $B_d(\alpha_i,2^{-s+1})\subseteq X$. (We may also assume that $d(\alpha_i,g)<2^{-s}$). But this means that $B_{\varphi(i,s)}\subseteq X$.
%
%
%  there must be some $i$ such that $\alpha_{i}$ is sufficiently close to $g\in X$ which gives $X\cap B_{\varphi(i,s)}\neq\emptyset$ for some $s>n+2$ (as $\{\alpha_{i}\}_{i\in\omega}$ is assumed to be dense wrt $\tau$).
These six items $X,Y,Z,i,s,n$ must be thus found by the above procedure. Furthermore, since $d(\alpha_{i},g)<2^{-s}<2^{-n-1}-2^{-s}$, then $g\in B_{d}(\alpha_{i},2^{-n-1}-2^{-s})$.

Hence the procedure above witnesses that $B_j$ is effectively open wrt $\tau_d$. Thus, $\tau$ and $\tau_d$ are effectively compatible.
\end{proof}

\begin{lemma}\label{fact1}Let $G$ be a computable topological group that contains a dense set of uniformly computable points. Then $G$ is effectively compatible with a right-c.e.~metric space. Furthermore, the compatible metric (on $G$) is also left-invariant.
\end{lemma}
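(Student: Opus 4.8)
The plan is to derive Lemma~\ref{fact1} by applying Lemma~\ref{lemma:effcom} to the given dense set of uniformly computable points. The key observation is that the hypothesis of Lemma~\ref{lemma:effcom} requires a computable function $\varphi$ with $d(\alpha_i, g) \leq 2^{-s}$ for all $g \in B_{\varphi(i,s)}$, and this is exactly the kind of data that uniform computability of the $\alpha_i$ (together with the effective description of $d$ coming from the proof of Theorem~\ref{thm:effbk}) should provide. So first I would recall that since each $\alpha_i$ is a computable point, its name $N^{\alpha_i}$ is uniformly c.e., and since $\bigcap_n \V_n = \{e_G\}$ with the $\V_n$ forming an effective local base at $e_G$, the metric ball $B_d(\alpha_i, 2^{-s})$ contains $\alpha_i \V_{s+c}$ for an appropriate constant shift (this was essentially shown in the compatibility argument inside the proof of Theorem~\ref{thm:effbk}: $g \V_n \subseteq B_d(g, 2^{-n+1})$ or similar).

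Next I would make this precise: I want, uniformly in $i$ and $s$, to find a basic open set $B_{\varphi(i,s)}$ with $\alpha_i \in B_{\varphi(i,s)}$ and $B_{\varphi(i,s)} \subseteq B_d(\alpha_i, 2^{-s})$. To do this, search for a $\tau$-basic open set $B$ together with an index $n$ such that $B \subseteq \alpha_i \cdot \V_n$ (using that $x \mapsto \alpha_i^{-1} x$ is effectively continuous, so $\alpha_i \V_n = (\text{left translation})^{-1}(\V_n)$ has a c.e.\ name relative to $N^{\alpha_i}$) with $2^{-n+1} < 2^{-s}$, i.e.\ $n > s+1$, and such that $B$ is enumerated into the name of $\alpha_i \V_n$ and also $\alpha_i \in B$. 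Actually the cleaner route: the name of $\alpha_i$ is c.e., so enumerate $N^{\alpha_i}$; meanwhile $\alpha_i \V_n$ is effectively open uniformly (its name is c.e.\ from $N^{\alpha_i}$ since left-translation by $\alpha_i$ is effectively continuous given $N^{\alpha_i}$, and $\V_n$ is effectively open); wait for a basic ball $B$ appearing in the name of $\alpha_i \V_n$ that also lies in $N^{\alpha_i}$ (such a $B$ exists because $\alpha_i \in \alpha_i \V_n$ and $\alpha_i \V_n$ is open). Setting $\varphi(i,s) = $ the index of this $B$ for $n = s+2$, say, gives the required function, using the inclusion $\alpha_i \V_n \subseteq B_d(\alpha_i, 2^{-n+1}) \subseteq B_d(\alpha_i, 2^{-s})$ extracted from the compatibility estimate in Theorem~\ref{thm:effbk}'s proof.

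With $\varphi$ in hand, Lemma~\ref{lemma:effcom} applies directly and yields that $G$ is effectively compatible with the countable metric space $(\{\alpha_i\}_{i \in \omega}, d)$, where $d$ is the metric from Theorem~\ref{thm:effbk}. It remains to check that this metric space is \emph{right-c.e.}, i.e.\ that $d(\alpha_i, \alpha_j)$ has a uniformly c.e.\ right cut; this is the point flagged in Remark~\ref{rem:afterlemma}. This follows by running the same enumeration procedure as in the final part of the proof of Theorem~\ref{thm:effbk} (searching for chains of basic balls $B_{p_m}, B_{q_m}$ with $B_{p_m}^{-1} B_{q_m} \subseteq \V_{n_m}$, linking consecutive balls, with $B_{p_0} \in N^{\alpha_i}$ and $B_{q_l} \in N^{\alpha_j}$), which is legitimate here precisely because $N^{\alpha_i}$ and $N^{\alpha_j}$ are uniformly c.e.\ by the assumed uniform computability of the points. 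Finally, left-invariance of the metric on $G$ is inherited directly from the left-invariance of $d$ already established in Theorem~\ref{thm:effbk}.

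The main obstacle I anticipate is the bookkeeping around constants in passing from "$g \in \alpha_i \V_n$" to "$d(\alpha_i, g) \leq 2^{-s}$": one must quote the precise inclusion between $\V_n$-neighbourhoods and metric balls proved (somewhat implicitly) inside Theorem~\ref{thm:effbk}, namely the two-sided compatibility estimate $B_d(g, 2^{-n-1}) \subseteq g\V_n$ and $g\V_{n+1} \subseteq B_d(g, 2^{-n})$, and make sure the shift $n \mapsto s$ is chosen so that the search always succeeds. A secondary subtlety is that $d$ is only a point-free right-c.e.\ metric a priori, so we genuinely need the uniform c.e.\ names of the $\alpha_i$ to re-derive right-c.e.-ness of $d(\alpha_i, \alpha_j)$; but this is a routine re-run of the Theorem~\ref{thm:effbk} enumeration and should present no real difficulty.
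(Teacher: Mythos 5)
Your proposal is correct and takes essentially the same route as the paper: apply Theorem~\ref{thm:effbk}, observe that $d(\alpha_i,\alpha_j)$ is uniformly right-c.e.\ by feeding the c.e.\ names $N^{\alpha_i},N^{\alpha_j}$ into its enumeration procedure, build the function $\varphi$, and invoke Lemma~\ref{lemma:effcom}. The only (cosmetic) difference is in locating $B_{\varphi(i,s)}$: the paper searches directly for basic sets $X,Y$ with $X^{-1}\cdot Y\subseteq\V_s$ and a basic $B_j\subseteq X\cap Y$ containing $\alpha_i$, so that $d(\alpha_i,g)\leq\varrho(\alpha_i,g)\leq 2^{-s}$ on $B_j$ without any appeal to the metric-ball compatibility estimates, whereas you pass through the translate $\alpha_i\V_{s+2}$; both searches succeed for the same reason ($\alpha_i^{-1}\alpha_i=e_G\in\V_s$ plus effective continuity of the group operations).
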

\begin{proof}
Apply Theorem \ref{thm:effbk} to produce a compatible right-c.e. metric $d$ for $G$. Let $\{\alpha_i\}_{i\in\omega}\subseteq G$ be the set of uniformly computable points with respect to the original topology $\tau$ on $G$. Now $d(\alpha_i,\alpha_j)$ is right-c.e.~uniformly in $i,j$ by following the procedure in Theorem \ref{thm:effbk} and feeding to the procedure the c.e. names for $N^{\alpha_i}$ and $N^{\alpha_j}$.  Now take $\mathcal{M}=(\{\alpha_i\}_{i\in\omega},d)$.
%Thus we can map $G$ into $\overline{\mathcal{M}}$ in the obvious way. This map is an isometry.

It remains to check that $(G,\tau)$ is effectively compatible with $\mathcal{M}$. By Lemma \ref{lemma:effcom}, we need only check that there is a computable function $\varphi(i,s)=j$ s.t. $d(\alpha_{i},g)\leq 2^{-s}$ for any $g\in B_{j}$. (Here $B_j$ are the basic open sets of $\tau$). Given $i,s$, search for some basic open set $B_j$ such that $B_j\subseteq X\cap Y$, $\alpha_i\in B_j$ and $X^{-1}\cdot Y\subseteq \mathcal{V}_s$.
%enumerate $f^{-1}(\V_{s})$, where $f(x,y)=x^{-1}\cdot y$. Since $N^{\alpha_{i}}$ is uniformly c.e., wait for $B^{i}_{n}\subseteq X\cap Y$ to show up,
Some $B_j$ must be found since $\alpha_{i}^{-1}\cdot\alpha_{i}=e_{G}\in\V_{s}$. Now define $\varphi(i,s)=j$. Since $B_{j}^{-1}\cdot B_{j}\subseteq\V_{s}$, it must be that $\forall g,h\in B_{j}$, $d(g,h)\leq\varrho(g,h)\leq 2^{-s}$, in particular, $\forall g\in B_{j}$, $d(\alpha_{i},g)\leq 2^{-s}$.
\end{proof}
%Even though a right-c.e. metric compatible with the topology has been produced, we note that the approach so far was point-free. In order for a metric space $(G,d)$ to be considered computable, we need to find a homeomorphic copy of $(G,d)$ with a dense set of computable points.

By effective compatibility, the group operations remain computable with respect to the metric. Thus, to argue that the group from Theorem~\ref{thm:completemet} has a computable Polish presentation, all we need to show is that there is a computable dense sequence.

\begin{remark}\label{rem:formal}\rm We can easily reconstruct a computable dense sequence in a computable topological group with  c.e.~formal inclusion. The latter is an  axiomatic generalisation of formal inclusion in metric spaces that is defined as follows:
$$B(x, q) \subset_{form} B(y, r) \mbox{ iff } d(x,y)+q < r.$$
Note it is a c.e.~relation  in a right-c.e.~space.
 We omit the definition of \emph{abstract} formal inclusion $\gg$ and refer the reader to \cite{MeMo}, but we note that  \cite{MeMo} contains an example of a  non-metrisable computable topological space
 with c.e.~strong inclusion.
We claim that Lemma \ref{fact1} implies the following:
\emph{Every  computable topological group with a c.e.~formal inclusion is effectively compatible with a right-c.e.~metric space.}
To see why, extract a dense set of uniformly computable points by considering for each $i_0$  the first found effective sequence $B_{i_0}\gg B_{i_{1}}\gg\dots$ where $\bigcap_k B_{i_k}=\{\alpha_{i_0}\}$. It follows from  the definition of formal inclusion in \cite{MeMo} that $N^{\alpha_{i_0}}$ is c.e.; we omit the details.
\end{remark}

\subsection{Proof of  Theorem \ref{thm:completemet}}
In order to identify special points in $G$, we use some ideas in \cite{Grubba} by utilising the group operations. % in place of computable regularity.
 Fix $G$ and $d$ as in the proof of Theorem~\ref{thm:effbk}. The first lemma below is designed to implement the idea sketched in Remark~\ref{rem:formal}, but in the absence of
c.e.~`formal inclusion'.

\begin{lemma}\label{point-free}
Given a basic open set $B_{i}$ in $G$, there exists a computable sequence of basic open sets $\{B_{i_{s}}\}_{s\in\omega}$ such that:
\begin{enumerate}
    \item $B_{i_0}=B_i$ and $B_{i_{s+1}}\subseteq B_{i_{s}}$ for every $s$.
    \item $dm\left(\overline{B_{i_{s}}}\right)\coloneqq\sup\left\{d(x,y)\mid x,y\in \overline{B_{i_{s}}}\right\}\leq 2^{-s}$ for every $s$.
    \item $f^{*}\left(B_{i_{s}},B_{i_{s}}\right)\subseteq \V_{s}$ for every $s$, where  $f^{*}(x,y)=x^{-1}y$.
\end{enumerate}
\end{lemma}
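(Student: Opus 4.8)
The plan is to build the sequence $\{B_{i_s}\}$ by recursion on $s$, starting from $B_{i_0}=B_i$, and at each stage $s{+}1$ search for a single basic open set witnessing all three demanded properties simultaneously; the point-free nature of the topology forces us to express each property as a c.e.\ condition on $\nu$-indices, which is exactly what the effective continuity of the group operations and the effective openness established in the preceding sections provide. Concretely, having defined $B_{i_s}$, I would search (dovetailing over all $\nu$-indices) for a basic open set $B$ together with auxiliary basic open sets $B_{k_0},\dots,B_{k_m}$ such that: (a) $\overline{B}\subseteq B_{i_s}$, which I can certify via formal/strong inclusion of basic balls --- more precisely, it suffices to find some $B'$ with $B\subseteq B'$ and $B'\subseteq B_{i_s}$ since $\overline{B}\subseteq B'$ is not directly c.e., so in fact I would instead demand a witness of the form $B\cdot B^{-1}\subseteq\mathcal U$ for a small enough $\mathcal U$ together with $B\subseteq B_{i_s}$, from which $\overline B\subseteq B_{i_s}$ can be extracted from the metric estimate in (b); (b) using the enumeration procedure from the proof of Theorem~\ref{thm:effbk}, find indices $\langle n_m\rangle$ certifying that for all $x,y\in B$ one has $d(x,y)\le 2^{-s-1}$ (this is the c.e.\ condition ``$B^{-1}\cdot B\subseteq \V_{n}$ for $n$ with $2^{-n}\le 2^{-s-1}$'', exactly as in Lemma~\ref{fact1}), which gives $dm(\overline{B})\le 2^{-s}$ by continuity of $d$; (c) find, via the effective continuity of $f^*(x,y)=x^{-1}y$ (the preimage of the effectively open $\V_s$ under $f^*$ is effectively open, uniformly), a pair of $\nu$-indices showing $B\times B\subseteq (f^*)^{-1}(\V_s)$, i.e.\ $f^*(B,B)\subseteq\V_s$.

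\medskip

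The key steps, in order, are: first, reduce property (3) to enumerating the effectively open set $(f^*)^{-1}(\V_s)$ and noticing that a basic rectangle $B\times B$ inside it can be found by the product-topology clause of Definition~\ref{def:compttopsp}; second, reduce property (2) to the c.e.\ condition ``$B^{-1}B\subseteq\V_n$'' which by the argument in Lemma~\ref{fact1} forces $d(g,h)\le\varrho(g,h)\le 2^{-n}$ for all $g,h\in B$, hence $dm(\overline B)\le 2^{-s}$ once $n\ge s+1$ (closure does not increase the diameter bound since $d$ is continuous); third, reduce property (1) to finding $B\subseteq B_{i_s}$ as a $\nu$-index inclusion, which is c.e.\ because $B_{i_s}$ is a basic open set and ``$\nu(j)\subseteq\nu(k)$'' is $\Sigma^0_1$ when verified through the intersection relation $W$ (i.e.\ $\nu(j)\cap\nu(k)=\bigcup\{\nu(l):(j,k,l)\in W\}$ contains a subfamily covering $\nu(j)$); and finally, argue that such a $B$ \emph{exists} at every stage: since $e_G\in B_{i_s}$ is false in general --- $B_{i_s}$ need not contain the identity --- I instead note that for any point $g\in B_{i_s}$, the map $x\mapsto x$ is continuous and $\V_s$, $B_{i_s}$ form neighbourhoods, so by continuity of $f^*$ at $(g,g)$ and the fact that $\{\V_n\}$ is a local base at $e_G$, there is a small enough basic neighbourhood $B\ni g$ with $B\subseteq B_{i_s}$, $f^*(B,B)\subseteq\V_s$, and $B^{-1}B\subseteq\V_{s+1}$; hence the search terminates.

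\medskip

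The main obstacle I anticipate is property (2), specifically passing from a diameter bound on $B_{i_s}$ to the same bound on its \emph{closure} $\overline{B_{i_s}}$ in a point-free setting: a priori we only control $d$ on points that lie in basic open sets we have named, and $\overline{B_{i_s}}$ may contain limit points of $G$ (or even of the completion) about which we have no basic-open certificate. The resolution is that $d$ is continuous with respect to $\tau$ (Theorem~\ref{thm:effbk}), so the set $\{(x,y): d(x,y)\le 2^{-s}\}$ is closed, and therefore if $d(x,y)\le 2^{-s-1}<2^{-s}$ holds on the dense-in-$\overline{B_{i_s}}$ set $B_{i_s}$ it holds on all of $\overline{B_{i_s}}$; so the honest requirement to enumerate is the strict/smaller bound $2^{-s-1}$ inside $B_{i_s}$, which then yields $dm(\overline{B_{i_s}})\le 2^{-s}$. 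A secondary subtlety is ensuring the \emph{nesting} $B_{i_{s+1}}\subseteq B_{i_s}$ is genuinely c.e.-verifiable rather than just true; this is handled by only accepting a candidate $B$ once the intersection relation $W$ has explicitly certified $B\subseteq B_{i_s}$, which it eventually will for the $B$ produced by the existence argument above. Once these three properties are in hand, $\{B_{i_s}\}$ is a computable nested sequence of basic open sets with diameters shrinking to $0$, so (by completeness of $d$, which will be invoked in the proof of Theorem~\ref{thm:completemet}) $\bigcap_s\overline{B_{i_s}}$ is a single point, giving the desired handle on special points.
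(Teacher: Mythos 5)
Your overall strategy (an inductive search driven by the effective continuity of $f^*$ and the local base $\{\V_n\}$, plus a closure/continuity argument for the diameter bound) is the same as the paper's, and your treatment of property (2) --- certify a bound inside the open set and pass to the closure by a limit argument --- is essentially what the paper does. The gap is in how you certify the inclusions. You claim that ``$\nu(j)\subseteq\nu(k)$'' is $\Sigma^0_1$ because the $W$-decomposition of $\nu(j)\cap\nu(k)$ ``contains a subfamily covering $\nu(j)$'', and that for the classically existing candidate $B$ this certificate ``eventually will'' appear. Neither claim is justified in the point-free setting: $W$ only tells you that certain basic sets sit \emph{inside} intersections; it never certifies that a family of basic sets \emph{covers} a given basic set (a covering statement is not recognizable without compactness or negative information), and there is no guarantee that the index $j$ itself, or any family covering $\nu(j)$, is ever listed for the pair $(j,k)$. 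The same defect infects your step (c): for an arbitrarily chosen $B$, the statement $B\times B\subseteq (f^*)^{-1}(\V_{s+1})$ is again a covering condition relative to the enumerated rectangles and is not a c.e.\ event. So as written, your search may simply never accept the $B$ whose existence you prove, and the recursion stalls.

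The repair --- and this is exactly how the paper proceeds --- is to never verify inclusions after the fact but to manufacture sets that carry their certificates by construction. One searches for a basic $B$ and a $W$-listed (or preimage-listed) witness such that $f^*(B,B)\subseteq f^*(B_{i_s},B_{i_s})\cap\V_{s+1}$ is certified because $B$ is taken from the enumerations (e.g.\ $B$ is a $W$-listed basic subset of $B_j\cap B_k$ for a rectangle $B_j\times B_k$ listed in the name of the preimage), together with the genuinely c.e.\ condition $B\cap B_{i_s}\neq\emptyset$ (c.e.\ precisely because all basic sets are non-empty, so the intersection is non-empty iff $W$ lists some triple for it). One then defines $B_{i_{s+1}}$ to be any basic set $W$-listed inside $B\cap B_{i_s}$, so the nesting of property (1) holds automatically, and property (3) follows from $B_{i_{s+1}}\subseteq B$; the existence of acceptable witnesses follows by evaluating $f^*$ at $(x,x)$ for any $x\in B_{i_s}$, since $e_G\in f^*(B_{i_s},B_{i_s})\cap\V_{s+1}$. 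Note also that the lemma only asks for $B_{i_{s+1}}\subseteq B_{i_s}$, so your detour through $\overline{B}\subseteq B_{i_s}$ is unnecessary.
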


Notice that we only claim that the sequence $\{B_{i_{s}}\}_{s\in\omega}$ is computable (uniformly in $i=i_0$). We make no claims about how difficult it might be to approximate $dm\left(\overline{B_{i_{s}}}\right)$.
\begin{proof}
Let $B_{i_{0}}=B_{i}$. Since $\V_0=G$, hence $d(x,y)\leq 1$ for every $x,y\in G$ and so properties 2 and 3 are trivially true for $s=0$. Now suppose inductively that $B_{i_{s}}$ satisfying the desired properties has been defined. Since $f^{*}$ is both effectively open and effectively continuous, and since $e_G\in f^{*}\left(B_{i_{s}},B_{i_{s}}\right)\cap \V_{s+1}$, we can search for some basic open set $B$ s.t. $f^{*}(B,B)\subseteq f^{*}(B_{i_{s}}, B_{i_{s}})\cap\V_{s+1}$ and $B\cap B_{i_{s}}\neq\emptyset$.
% The existence of $B$ is guaranteed since $e_{G}\in f^{*}\left(B_{i_{s}},B_{i_{s}}\right)\cap \V_{s+1}\neq\emptyset$.
Take $B_{i_{s+1}}$ to be any basic open set contained in $B\cap B_{i_{s}}$. This gives property 1. Note that $f^{*}\left(B_{i_{s+1}},B_{i_{s+1}}\right)\subseteq \V_{s+1}$ which gives property 3.

It remains to check property 2. Let $x,y\in\overline{B_{i_{s}}}$ be given. Since the original topology on $G$ is compatible with the topology on $G$ induced by the metric $d$, we can find sequences $(x_{n})_{n\in\omega}$ and $(y_{n})_{n\in\omega}$ such that $x_{n},y_{n}\in B_{i_{s}}$ and $d(x_n,x), d(y_n,y)\leq 2^{-n}$ for all $n$.
%By continuity of $^{-1}$ and $\cdot$, we have $\lim_{n}\left((x^{n})^{-1}\cdot x\right)=e_{G}$, that is, $\lim_{n}d(x_{n},x)=0$, and similarly $\lim_{n}d(y_{n},y)=0$.
Therefore, $\forall\varepsilon>0$, $\exists m,n$ for which $d(x_{n},x)<\frac{\varepsilon}{2}$ and $d(y_{m},y)<\frac{\varepsilon}{2}$, then
\begin{align*}
    d(x,y)&\leq d(x,x_{n})+d(x_{n},y_{m})+d(y_{m},y)\\
    &\leq \frac{\varepsilon}{2}+2^{-s}+\frac{\varepsilon}{2}\quad (\text{since } f^{*}\left(B_{i_{s}},B_{i_{s}}\right)\subseteq\V_{s})\\
    &\leq 2^{-s}+\varepsilon
\end{align*}
Since the above holds for any $\varepsilon$, then $d(x,y)\leq 2^{-s}$.
\end{proof}

Since $d$ is complete and compatible with the original topology, by property 3 of Lemma \ref{point-free}, there is a unique point $\alpha_{i}\in G$ s.t.~$$\alpha_{i}\in\bigcap_{s\in\omega}\overline{B_{i_{s}}}.$$ Here $\alpha_i$ corresponds to the sequence $\{B_{i_{s}}\}_{s\in\omega}$ with $B_{i_0}=B_i$. Repeating this for all $i$ produces an infinite sequence $\alpha_0,\alpha_1,\cdots$ such that $\alpha_i\in\overline{B_i}$ for each $i$. When we want to distinguish between the different sequences $\{B_{i_{s}}\}_{s\in\omega}$ we will use the notation $\{B^k_{i_{s}}\}_{s\in\omega}$ if $B^k_{i_0}=B_k$.

%However we shall be able to show that $d(\alpha_{i},\alpha_{j})$ is right-c.e. uniformly in $i,j$.

%Furthermore, we prove that $\{\alpha_{i}\}_{i\in\omega}$ as such defined is dense in the original topology.

We claim that
the set $\{\alpha_{i}\}_{i\in\omega}$ produced in Lemma \ref{point-free} is dense in the original topology of $G$.

To see why,
fix a basic open set $B_{i}$ be given and let $g\in B_{i}$. Since $G$ is metrizable, $G$ is also (classically) regular. Then $\exists F\ni g$ s.t. $F\subseteq B_{i}$ and $F$ is a closed neighbourhood of $g$. Since the basic open balls form a basis for the topology of $G$, $\exists B_{j}\subseteq F$, and hence $\overline{B_{j}}\subseteq F\subseteq B_{i}$. Then we must have that $\alpha_{j}\in\overline{B_{j}}\subseteq B_{i}$.

%We shall show that the $\alpha_{i}$-s are computable points with respect to the original topology on $G$. However note that the $\alpha_i$-s might not be distinct.
\

%\begin{proof}[Proof of Theorem \ref{thm:completemet}]
We now finish the proof of the theorem.
Since $\{\alpha_{i}\}_{i\in\omega}$ is dense in $\tau$ and using the procedure in Lemma \ref{point-free}, given any $i,s$, we are able to effectively identify a basic open set $B_{j}$ for which $\forall g\in B_{j},\,d(\alpha_{i},g)\leq 2^{-s}$. By Lemma \ref{lemma:effcom}, we have that $\tau$ is effectively compatible with $\tau_{d}$.

First of all notice that $d(\alpha_{i},\alpha_{j})$ is a right-c.e.~real uniformly in $i,j$. To see this, note that an easy modification of the right~c.e.~approximation procedure of $d$ in Theorem \ref{thm:effbk} by requiring that $B_{p_{0}}\cap B_{\varphi(i,s+1)}\neq\emptyset$ and $B_{q_{l}}\cap B_{\varphi(j,s+1)}\neq\emptyset$ and enumerating $2^{-s}+\sum_{m\leq l}2^{-n_{m}}$ allows us to produce the right cut of $d(\alpha_{i},\alpha_{j})$. But since $\{\alpha_i\}_{i\in\omega}$ are obviously uniformly computable points with respect to $\tau_d$, and since $\tau$ and $\tau_d$ are effectively compatible, and also that $d$ is right-c.e., we conclude that $\{\alpha_i\}_{i\in\omega}$ are also uniformly computable points with respect to~$\tau$.
%
%To enumerate $N^{\alpha_{i}}$ with respect to $\tau$, given any basic open set $B_{n}$ of $\tau$, we can obtain a name for $B_{n}$ w.r.t. $\tau_{d}$, a listing of open balls $B_{d}(\alpha_{j},r)$. If for some ball $B_{d}(\alpha_{j},r)$ listed, $d(\alpha_{i},\alpha_{j})<r$, then enumerate $B_{n}$ into $N^{\alpha_{i}}$.

\subsection{Consequences of Theorem \ref{thm:completemet}.}

%Recall that all our groups are assumed to be Hausdorff and first-countable.

 A topological space will be called topologically complete if it admits a metric under which it is complete.
 Recall also that a metric is invariant if it is both left and right invariant. For instance, every left- or right-invariant metric
 in an abelian group is invariant.
  Klee~\cite{Klee} proved the following result.
 Suppose G is a group with invariant metric $d$. If the space $(G, d)$ is topologically complete, then $G$ is actually complete under $d$.
 We therefore obtain the following, rather satisfying:

 \begin{corollary}\label{cor:abelian}
Let $G$ be a computable topological group that is Polish(able) abelian. Then $G$ is effectively compatible with a right-c.e.~Polish group.
\end{corollary}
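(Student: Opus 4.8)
\textbf{Proof plan for Corollary~\ref{cor:abelian}.}
The plan is to combine Theorem~\ref{thm:effbk} and Theorem~\ref{thm:completemet} with Klee's theorem, so that the only real work is to verify that the abelian hypothesis lets us conclude \emph{completeness} of the metric $d$ manufactured in Theorem~\ref{thm:effbk}. First I would recall that Theorem~\ref{thm:effbk} already supplies a compatible right-c.e.~left-invariant metric $d$ on $G$; since $G$ is abelian, left-invariance and right-invariance coincide, so $d$ is in fact invariant in Klee's sense. Next I would invoke the hypothesis that $G$ is Polish(able): this means precisely that $G$, as a topological space, is topologically complete, i.e.~it carries \emph{some} compatible complete metric (not necessarily $d$). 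Because $d$ is an invariant metric on $G$ and the space $(G,d)$ is topologically complete, Klee's theorem~\cite{Klee} applies verbatim and yields that $G$ is actually complete under $d$ itself.

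Once completeness of $d$ is in hand, the rest is immediate: the hypothesis of Theorem~\ref{thm:completemet} is satisfied, so $G$ has a right-c.e.~Polish presentation, and moreover the proof of Theorem~\ref{thm:completemet} (via Lemma~\ref{point-free}, Lemma~\ref{lemma:effcom} and the density argument) produces a dense uniformly computable sequence $\{\alpha_i\}_{i\in\omega}$ whose $\tau$-names are uniformly c.e. Effective compatibility of $\tau$ with $\tau_d$ is exactly what Lemma~\ref{lemma:effcom} delivers, and by the remark following Lemma~\ref{fact1} the group operations remain computable in this right-c.e.~metric presentation. Hence $G$ is effectively compatible with a right-c.e.~Polish \emph{group}, and the left-invariant (indeed invariant) metric is preserved since it is the same $d$. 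I would state this last bookkeeping point explicitly: the presentation we output is literally $(\{\alpha_i\}_{i\in\omega},d)$ together with the inherited $\cdot$ and $^{-1}$.

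The main obstacle, and the one subtle point worth spelling out, is the logical order of the two completeness notions: we are \emph{not} assuming $d$ is complete a priori, and we cannot simply hand-pick a complete metric because the effectivity of Theorem~\ref{thm:effbk} is tied to \emph{that particular} $d$ built from the local base $\{\V_n\}$. The content of Klee's theorem is exactly the bridge that lets us transfer abstract topological completeness of the Polish(able) space to genuine completeness of the specific invariant metric $d$; without the abelian assumption $d$ need only be left-invariant, Klee's hypothesis fails, and the argument breaks. So the proof is essentially a one-line deduction once one has correctly isolated where ``abelian'' and ``Polish(able)'' enter, but getting that isolation right is the whole point. Everything else --- the right-c.e.~bound on $d(\alpha_i,\alpha_j)$, uniform computability of the $\alpha_i$ with respect to $\tau$, and computability of the operations --- is quoted directly from Theorem~\ref{thm:completemet} and its proof.
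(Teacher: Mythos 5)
Your proposal is correct and follows exactly the paper's own route: apply Theorem~\ref{thm:effbk} to get the left-invariant (hence, by abelianness, invariant) right-c.e.~metric $d$, use Klee's theorem together with the Polish(able) hypothesis to conclude that $G$ is complete under this specific $d$, and then invoke Theorem~\ref{thm:completemet}. Your explicit remark on why the abelian assumption is exactly what makes Klee's hypothesis applicable matches the paper's reasoning.
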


\noindent Of course, the metric is invariant in this case.

\

Since all compact metric spaces are complete, by Theorem~\ref{thm:completemet} we see that any compact computable topological group must be compatible with a right-c.e.~metric space. In fact, we will se that the same is true of locally compact groups; this fact will be established at the end of the next section.

\section{Locally compact groups and proper metrizations}\label{proper:section}
Recall that a metric $d$ is proper if every closed bounded ball $\{y\mid d(x,y)\leq r\}$ is compact; equivalently, every closed bounded set is compact.
As we have already mentioned above, Struble showed the following:

\begin{theorem}[Struble \cite{Struble}, see also  \cite{Uffe}]\label{thm:Struble}
Let $G$ be a topological group which is Hausdorff, second countable and locally compact. Then $G$ admits a compatible left-invariant proper metric.
\end{theorem}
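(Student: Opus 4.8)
\textbf{Proof proposal for Theorem~\ref{thm:Struble} (Struble's theorem).}

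The plan is to combine the Birkhoff--Kakutani theorem (Theorem~\ref{thm:bkthm}) with a careful use of the Baire category/open-subgroup structure of locally compact groups to upgrade a left-invariant metric to a \emph{proper} one. First I would invoke Theorem~\ref{thm:bkthm}: since $G$ is Hausdorff and second countable, it is metrizable, so it carries \emph{some} compatible left-invariant metric $\varrho$. The issue is that $\varrho$ need not be proper --- closed balls may fail to be compact. The key structural input is that in a Hausdorff, second countable, locally compact group there is a compact symmetric neighbourhood $V$ of the identity $e$ (with $V = V^{-1}$), and one may further arrange $\overline{V}$ compact. Let $H = \bigcup_{n\ge 1} V^n$; this is an open (hence closed) subgroup of $G$, and by second countability $G/H$ is countable, so $G = \bigsqcup_{k} g_k H$ is a countable disjoint union of clopen cosets. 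It therefore suffices to build a proper compatible left-invariant metric on $H$ (an open, $\sigma$-compact, locally compact subgroup) and then extend across the countably many cosets by declaring distinct cosets to be, say, at least distance $1$ apart in a way that is left-invariant --- the cosets being clopen means this glueing preserves both the topology and left-invariance, and properness is preserved because a bounded set meets only finitely many cosets.

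So the heart of the matter is the $\sigma$-compact case: $H = \bigcup_n V^n$ with $V$ compact symmetric. Here I would construct a \emph{norm}-like function (a length function) $\ell\colon H \to [0,\infty)$ by setting $\ell(x)$ to be (roughly) the least $n$ such that $x \in V^n$, smoothed out using a compatible left-invariant metric on $H$ restricted to each compact shell $V^{n+1}\setminus V^n$, so that $\ell$ is continuous, symmetric ($\ell(x)=\ell(x^{-1})$, using $V=V^{-1}$), subadditive ($\ell(xy)\le \ell(x)+\ell(y)$, from $V^m V^n = V^{m+n}$), vanishes only at $e$, and --- crucially --- is \emph{proper} in the sense that $\{x : \ell(x)\le r\}$ is compact for every $r$ (this is where $\ell^{-1}([0,r]) \subseteq V^{\lceil r\rceil + 1}$, a compact set, does the work). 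Then $d(x,y) := \ell(x^{-1}y)$ is a left-invariant metric (symmetry from $\ell(x^{-1})=\ell(x)$, triangle inequality from subadditivity), it induces the topology of $H$ (one checks $\ell$ is continuous and $\ell^{-1}([0,\varepsilon))$ is a neighbourhood base at $e$, using that $V$ can be taken arbitrarily small in the ``local'' part of the construction), and it is proper since closed $d$-balls are closed subsets of the compact sets $V^{n}$.

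The technically delicate step --- which I expect to be the main obstacle --- is producing the length function $\ell$ that is simultaneously \emph{continuous}, \emph{subadditive}, and \emph{proper}, i.e.\ reconciling the coarse word-length $n \mapsto V^n$ (which is proper and subadditive but not continuous) with a genuine compatible left-invariant metric near $e$ (which is continuous but tells us nothing about large scales). The standard device is to fix a decreasing sequence $e \in \cdots \subseteq W_{2} \subseteq W_{1} \subseteq W_0 = V$ of compact symmetric neighbourhoods with $W_{n+1}^{3} \subseteq W_n$ (exactly as in the inductive construction of $\{\mathcal{V}_n\}$ in the proof of Theorem~\ref{thm:effbk}), use the Birkhoff--Kakutani-style formula to get a left-invariant metric $d_0 \le 1$ on $H$ adapted to the $W_n$, and then \emph{splice} $d_0$ with the integer word-length by a formula such as $\ell(x) = \inf \{ \sum_i ( n_i + d_0(e, u_i) ) \}$ over factorisations $x = g_{i_1} u_1 g_{i_2} u_2 \cdots$ with $g_{i_j}$ ranging over a fixed generating set contained in $V$ and $u_j \in V$; one then verifies the inf is attained on a compact set and that all three properties hold. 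Once $\ell$ is in hand, all the remaining verifications (metric axioms, compatibility, properness, left-invariance, and the glueing over $G/H$) are routine, so the write-up would front-load the construction of $\ell$ and treat the coset extension briefly. I note that for the purposes of this paper one could alternatively just cite Struble~\cite{Struble} or the modern account~\cite{Uffe}; the above is how I would reprove it from scratch.
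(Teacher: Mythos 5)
Your route is the length-function approach of \cite{Uffe} (splice a coarse word-length over a compact generating set with a local left-invariant metric), not Struble's original construction of a family $\{U_r\}_{r>0}$ of relatively compact symmetric sets with $U_rU_s\subseteq U_{r+s}$ and $d(x,y)=\inf\{r: x^{-1}y\in U_r\}$; note the paper does not reprove Theorem~\ref{thm:Struble} at all (it cites \cite{Struble}), though its effective analogue, Theorem~\ref{thm:effplig}, follows Struble's scheme. Your treatment of the compactly generated open subgroup $H=\bigcup_n V^n$ is essentially sound and is exactly the device of \cite{Uffe}.

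The genuine gap is the reduction to $H$, i.e.\ the coset-gluing step. Declaring distinct cosets of $H$ to be ``at least distance $1$ apart'' cannot work as described: for any such gluing in which the pairwise distances between cosets remain bounded, some closed ball meets infinitely many of the pairwise disjoint clopen cosets, and a set meeting infinitely many disjoint open sets has no compact closure, so properness fails whenever $G/H$ is infinite. Your parenthetical justification ``properness is preserved because a bounded set meets only finitely many cosets'' is precisely what fails. Concretely, take $G=\bigoplus_{i\in\omega}\mathbb{Z}/2\mathbb{Z}$ discrete: every compact (= finite) symmetric $V$ generates a finite subgroup $H$, and if all cosets lie within bounded distance of one another then some closed ball is all of $G$, which is not compact, even though a proper left-invariant metric on this $G$ certainly exists. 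To repair the step you must let the distance from $H$ to the cosets tend to infinity, and the only way to do this compatibly with left-invariance (left translation permutes the cosets) is the same weighted word-length construction you already use inside $H$: fix coset representatives $s_1,s_2,\dots$, assign weights $w_k\to\infty$ (symmetrizing so that $\ell(g)=\ell(g^{-1})$), and set $\ell(g)=\inf\bigl\{\ell_H(h_0)+\sum_{j}\bigl(w_{i_j}+\ell_H(h_j)\bigr)\bigr\}$ over factorizations $g=h_0s_{i_1}h_1\cdots s_{i_n}h_n$ with $h_j\in H$; then $\ell(g)\le R$ bounds both the number of factors and which representatives can occur, confining the ball to a finite union of sets of the form (finite set)$\cdot$(compact)$^{O(R)}$, which restores properness. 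This is exactly how \cite{Uffe} completes the argument, so the write-up should replace the ``distance $1$ between cosets'' gluing by that construction (or simply cite it); the rest of your outline is fine.
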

We refer the reader to \cite{Struble} for the classical proof.
This section is devoted to proving that the following effective version of Theorem~\ref{thm:Struble}.
(Recall that all our groups are Hausdorff and second countable.)

\begin{theorem}\label{thm:effplig}
Let $(G,\tau)$ be an effectively locally compact computable topological group. Then $G$  is effectively compatible with an effectively proper right-c.e. metric space. Furthermore the metric is left-invariant.
\end{theorem}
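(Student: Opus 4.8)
The plan is to combine the effective Birkhoff--Kakutani construction of Theorem~\ref{thm:effbk} with the classical strategy of Struble, replacing the abstract compactness arguments by their effective counterparts provided by the \emph{ercs} data from Lemma~\ref{lem:paulyercs}. Struble's idea is to take a relatively compact open symmetric neighbourhood $V$ of the identity, consider the ``gauge'' metric $d_0$ coming from a left-invariant metric adapted to $V$ (as in Birkhoff--Kakutani), and then replace $d_0$ by
\[
d(x,y)=d_0(x,y)+\sup_{n\in\mathbb Z}\bigl|\,\phi(xV^n)-\phi(yV^n)\,\bigr|
\]
or, more cleanly, fatten the metric so that closed balls of radius $\le n$ sit inside $V^{n}$ (which is relatively compact) and balls of radius $> n$ exhaust the group. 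Concretely, one arranges that $\overline{B_d(e,n)}\subseteq \overline{V^n}$ and $\overline{V^{n}}\subseteq B_d(e,n+1)$; properness then follows because each $\overline{V^n}$ is compact. The first step is therefore to use effective local compactness to \emph{effectively} locate, from the ercs, a basic open $V\ni e$ together with a compact $K$ with $e\in V\subseteq K$, and then to replace $V$ by an effectively open \emph{symmetric} relatively compact neighbourhood $V_0=V\cap V^{-1}$ with $\overline{V_0}\subseteq K$ compact (using effective openness of inversion and Fact~\ref{lem:comsub}).

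Next I would run the construction of Theorem~\ref{thm:effbk}, but starting the chain $\mathcal V_0\supseteq\mathcal V_1\supseteq\cdots$ from $\mathcal V_0=V_0$ (or interleaving $V_0$ into the $\mathcal U_n$), so that the resulting right-c.e.\ left-invariant metric $\varrho,d$ has the additional feature that $B_d(e,2^{-1})\subseteq V_0$. Since each $\mathcal V_{n+1}^3\subseteq\mathcal V_n$, the usual estimate gives $B_d(e,2^{-n})\subseteq \mathcal V_{n}$ and, conversely, $\mathcal V_{n+1}\subseteq B_d(e,2^{-n})$; rescaling, one obtains that the small closed balls around $e$ are contained in $\overline{V_0}$, hence compact. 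To make \emph{all} bounded closed balls compact, I would modify the metric on the large scale: define $V_0^n=\underbrace{V_0\cdots V_0}_{n}$, note $\overline{V_0^{\,n}}$ is compact uniformly in $n$ (it is a finite product of relatively compact sets, and a name for it can be computed from the ercs via repeated application of effective openness of multiplication together with Fact~\ref{lem:comsub}), and replace $d$ by $\tilde d(x,y)=\inf\{\,t\ge d(x,y): x^{-1}y\in \overline{V_0^{\lceil t\rceil}}\,\}$ — more carefully, one builds a new chain of neighbourhoods $\mathcal W_k$ with $\mathcal W_k\supseteq \mathcal W_{k+1}$, $\mathcal W_k\supseteq V_0^{\,k}$, $\bigcup_k\mathcal W_k=G$, and $\mathcal W_{k+1}^3\subseteq$ (a dilate of $\mathcal W_k$), and then mimics the Birkhoff--Kakutani $\varrho/d$ recipe on the ``outer'' scale. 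The point is that such $\mathcal W_k$ can be found \emph{effectively} precisely because $V_0^{\,k}$ is effectively open and $\overline{V_0^{\,k}}$ is effectively compact, so the search for witnessing basic open sets terminates. The resulting $\tilde d$ is left-invariant, right-c.e.\ (the right cut is enumerated exactly as in Theorem~\ref{thm:effbk}, the only new ingredient being the c.e.\ facts ``$B_i^{-1}B_j\subseteq\mathcal W_k$'' and ``$\overline{V_0^{\,k}}\subseteq B_{i_1}\cup\cdots\cup B_{i_m}$''), and proper, because a closed $\tilde d$-ball of radius $\le k$ is a closed subset of the compact set $\overline{V_0^{\,k}}$.

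Once $\tilde d$ is in hand, I would invoke Theorem~\ref{thm:completemet} (a proper metric on a locally compact group is complete, by Struble's theorem or by a Baire-category argument, so the hypothesis is met) to extract a computable dense sequence $\{\alpha_i\}$, giving a genuine right-c.e.\ \emph{Polish} presentation, and Lemma~\ref{lemma:effcom}/Lemma~\ref{fact1} to see that the identity is effectively compatible. It remains to check \emph{effective properness} in the sense of \S\ref{sec:properdef}: given a closed name for $A$ and a basic ball $B_{\tilde d}(\alpha,r)\supseteq A$, one has $A\subseteq \alpha\cdot\overline{V_0^{\lceil r\rceil+1}}$ (by left-invariance and the inclusion $B_{\tilde d}(e,r)\subseteq V_0^{\lceil r\rceil}$), and $\alpha\cdot\overline{V_0^{\lceil r\rceil+1}}$ has a computable compact name — from the compact name of $\overline{V_0^{\,k}}$ and effective openness of left translation — so by Fact~\ref{lem:comsub} we get a compact name for $A$; the effective local compactness of the new presentation is then immediate from the same data.

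The main obstacle I expect is the bookkeeping in the ``outer scale'' construction: unlike the inner Birkhoff--Kakutani chain, the sets $\mathcal W_k$ must simultaneously \emph{grow} (to exhaust $G$) and keep the cube condition $\mathcal W_{k+1}^3\subseteq \mathcal W_k$ (to get a genuine metric), which forces a nonlinear reparametrisation of the radius; one has to verify that this reparametrisation still yields a right-c.e.\ (not merely $\Delta^0_2$) metric and that properness survives it. A clean way around this is to keep $d$ (the inner metric) untouched and instead define $\tilde d(x,y)=\max\bigl(d(x,y),\,\psi(x^{-1}y)\bigr)$ where $\psi(g)=\inf\{k: g\in V_0^{\,k}\}$ is the (left-invariant) word-length-type function associated with the relatively compact generating set $V_0$; $\psi$ is lower semicomputable in the point-free sense (``$g\in V_0^{\,k}$'' is c.e.\ in a name for $g$), the triangle inequality for $\psi$ holds because $V_0^{\,k}V_0^{\,l}=V_0^{\,k+l}$, and $\tilde d$ is then manifestly left-invariant, compatible (near $e$ it agrees with $d$), proper (the closed $\tilde d$-ball of radius $k$ lies in $\overline{V_0^{\,k}}$), and right-c.e.; this reduces the whole theorem to the effective data already supplied by Theorem~\ref{thm:effbk}, Lemma~\ref{lem:paulyercs}, and Fact~\ref{lem:comsub}.
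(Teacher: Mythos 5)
Your overall strategy (run the effective Birkhoff--Kakutani metric near the identity, then enlarge it at large scale using relatively compact sets so that bounded closed balls land inside effectively compact sets) is the same Struble-style strategy the paper uses, and several of your ingredients (symmetrizing $V_0$, using effective openness of multiplication and Fact~\ref{lem:comsub} to get compact names for products and translates, checking effective properness via $A\subseteq\alpha\cdot\overline{V_0^{\,k}}$) match the paper's proof. However, there is a genuine gap in the mechanism you use to make the metric proper. Both of your variants rest on the powers of a single relatively compact symmetric neighbourhood $V_0$ of the identity: in the ``clean'' version the word-length function $\psi(g)=\inf\{k: g\in V_0^{\,k}\}$ must be finite for every $g\in G$, and in the ``outer chain'' version you require $\mathcal W_k\supseteq V_0^{\,k}$ and $\bigcup_k\mathcal W_k=G$ while each $\mathcal W_k$ stays inside a compact set, justifying the construction only by effective openness of $V_0^{\,k}$ and effective compactness of $\overline{V_0^{\,k}}$. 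This implicitly assumes that $V_0$ generates $G$, i.e.\ that $G$ is compactly generated by that neighbourhood. A general locally compact (even discrete) second countable group need not be: for an infinite discrete group that is not finitely generated (e.g.\ $\bigoplus_i\mathbb{Z}_p$, which the paper treats as vacuously effectively locally compact), $\bigcup_k V_0^{\,k}$ is a proper open subgroup $G_0$, $\psi$ is $+\infty$ off $G_0$, and $\tilde d$ is not a real-valued metric; and no choice of $\mathcal W_k$ built from powers of $V_0$ can exhaust $G$ while remaining relatively compact. The paper's proof closes exactly this hole using the ercs from Lemma~\ref{lem:paulyercs}: at the dyadic scales it throws in $W_{2^n}=B_n\cup B_n^{-1}$, where $(B_n)$ is the ercs basis of sets each contained in an effectively compact $K_m$, and sets $U_{2^{n+1}}=W_{2^{n+1}}\cup (U_{2^n})^4$, $d(x,y)=\inf\{r: x^{-1}y\in U_r\}$; the exhaustion $\bigcup_r U_r=G$ then comes from $(B_n)$ being a basis, not from compact generation, while the subadditivity $U_r\cdot U_s\subseteq U_{r+s}$ gives the triangle inequality directly (no outer cube condition is needed).

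Two smaller points to repair even in the compactly generated case: your $\tilde d(x,y)=\max\bigl(d(x,y),\psi(x^{-1}y)\bigr)$ fails $\tilde d(x,x)=0$ since $\psi(e)\geq 1$; you need a normalization such as $\psi'(g)=\inf\{k\geq 0: g\in V_0^{\,k+1}\}$ (and then compatibility near $e$ does use your arrangement $B_d(e,\tfrac12)\subseteq V_0$). Also $\psi$ is \emph{upper} semicomputable (right-c.e.) from a name of $g$, not lower semicomputable as you wrote, though the fact you actually need --- that witnessing $g\in V_0^{\,k}$ certifies an upper bound, so the right cut of $\tilde d$ is c.e.\ --- is the correct one. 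Finally, Theorem~\ref{thm:completemet} is stated for the metric of Theorem~\ref{thm:effbk}; the paper first obtains the dense computable sequence via Corollary~\ref{cor:locallycompact} (completeness of the Birkhoff--Kakutani metric $\delta$ follows because it agrees locally with a proper metric) and only then builds the proper metric, so you should either follow that order or argue explicitly that completeness of your $\tilde d$ transfers to $d$ before invoking that theorem.
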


It will also follow from the construction that if the metric produced in Theorem \ref{thm:completemet} is complete, then the proper right-c.e.~metric is also complete (cf.~the proof of Corollary~\ref{cor:locallycompact}).

\begin{proof}
 By (the proof of) Corollary \ref{cor:locallycompact} and Theorem \ref{thm:completemet}, $(G,\tau)$ contains a dense set of uniformly computable points $\{\alpha_i\}_{i\in\omega}$. By Lemma \ref{fact1}, $G$ is effectively compatible with $(\{\alpha_i\}_{i\in\omega},\delta)$ where $\delta$ is right-c.e. Furthermore $\delta$ is left-invariant.

By Lemma \ref{lem:paulyercs} we fix the triple $\left( \{B_n\}_{n\in\omega},\{K_m\}_{m\in\omega},R \right)$. Note that each $B_n$ is $\tau$-effectively open (uniformly in $n$) and collectively form a basis for $(G,\tau)$. We may also assume that for every $n$ there is some $m$ such that $(n,m)\in R$. By set product and power we mean the corresponding operation with respect to the group operation.

Note that the group identity $e$ is (in any computable topological group) a computable point with respect to $\tau$, therefore there is some $\tau$-effectively compact set $K$ and some $\tau$-open set $B$ such that $e\in B\subseteq K$. Since $\delta$ is compatible with $\tau$ we fix some $r\in\mathbb{Q}^+$ such that $B_\delta(e,r)\subseteq B\subseteq K$. By scaling $\delta$ we can assume that $r=2$, and so we may assume that $B_\delta(e,2)\subseteq K$. Note that we do not claim that $\overline{B_\delta(e,2)}$ or $B^{\leq}_\delta(e,2)$ is effectively compact, merely that some open ball around $e$ is contained in an effectively compact set $K$.

%Let $(G,\tau,\{\alpha_{i}\}_{i\in\omega})$ be an effectively locally compact computable topological group with a dense set of computable points. Then $(G,\tau,\{\alpha_{i}\}_{i\in\omega})$ admits an ercs. By Theorem \ref{thm:effbk}, and Lemma \ref{fact1}, $\exists\delta$ a left-invariant right-c.e. metric on $G$ s.t. $\tau$ and $\tau_{\delta}$ are effectively compatible. Then applying lemma \ref{lem:ercs}, $(G,\tau_{\delta},\{\alpha_{i}\}_{i\in\omega})$ also admits an ercs $((B_{n})_{n\in\omega},(K_{n})_{n\in\omega},R)$ where the $B_{n}$ are of the form $B_{\delta}(\alpha_{i},r)$ for some $i\in\omega$ and some $r\in\mathbb{Q}^{+}$.

%Enumerate $R$ to find a pair $B_{n}$ and $K$ s.t. $B_{n}\subseteq K$ and $B_{\delta}(e,r)\ll B_{n}$ for some $r>0$. Fix such an $r$ and consider $\delta^{*}=\frac{2}{r}B_{\delta}(e,r)$. It is clear that $\tau_{\delta^{*}}$ and $\tau_{\delta}$ are effectively compatible and hence we simply assume that $B_{\delta}(e,2)$ is contained in some effectively $\tau_{\delta}$-compact set $K$.

We now define a collection $\{U_{r}\}_{r\in\mathbb{Q}^{+}}$ of $\tau$-effectively open sets satisfying the following properties.
\begin{enumerate}
    \item For each $r\in\mathbb{Q}^{+}$, $U_{r}$ is contained in some $\tau$-effectively compact set.
    \item For each $r\in\mathbb{Q}^{+}$, $U_{r}=U_{r}^{-1}$.
    \item For each $r,s\in\mathbb{Q}^{+}$, $U_{r}\cdot U_{s}\subseteq U_{r+s}$.
    \item $\forall r<2,\,U_{r}=B_{\delta}(e,r)$.
    \item $\bigcup_{r\in\mathbb{Q}^{+}}U_{r}=G$.
    \item For each $r\in\mathbb{Q}^{+}$, $e\in U_{r}$.
\end{enumerate}
For $0<r<2$, define $U_{r}=B_{\delta}(e,r)$. To check that the properties hold, since $\delta(g,e)=\delta(g^{-1},e)$ for any $g\in G$, $U_{r}$ is closed under inverse for $r<2$. For $r+s<2$, let $x\in U_{r}$ and $y\in U_{s}$, then by triangle inequality and left-invariance of $\delta$, $\delta(xy,e)\leq\delta(xy,x)+\delta(x,e)=\delta(y,e)+\delta(x,e)$, thus giving that $xy\in U_{r+s}$.

Now we define $U_{2}=B_{\delta}(e,2)\cup W_{2}$, where $W_{2^{n}}$ is defined as follows. For each $n$, take $W_{2^{n}}=B_{n}\cup B_{n}^{-1}$. Then $W_{2^n}$ is $\tau$-effectively open (uniformly in $n$) and closed under inverse. If $B_{n}\subseteq K_{m}$, then by the effective continuity of $^{-1}$, $K_{m}^{-1}\supseteq B_{n}^{-1}$ is also effectively compact. We get that $U_{2}$ is contained in $K\cup K_{m}\cup K_{m}^{-1}$ which are effectively compact. It is then clear then that we have $\{U_{r}\}_{r\leq 2}$ with the desired properties.

Suppose inductively that $U_{r}$ for $r\leq 2^{n}$ have been defined s.t. each $U_{r}$ is $\tau$-effectively open and $U_{2^{n}}$ is contained in some $\tau$-effectively compact set. For each $2^{n}<r<2^{n+1}$, list out all finite sequences of positive rationals $\langle t_{i}\rangle_{i\leq m}$ s.t. $t_{i}\leq 2^{n}$ for each $i$ and $\sum_{i\leq m}t_{i}=r$. For each such sequence listed out, enumerate $\prod_{i\leq m}U_{t_{i}}$ into the open name of $U_{r}$. By inductive hypothesis, since each $U_{t_{i}}$ is effectively open, and multiplication is effectively open, then $U_{r}$ must also be effectively open (uniformly in the index $r$). Finally take $U_{2^{n+1}}=W_{2^{n+1}}\cup \left(U_{2^{n}}\cdot U_{2^{n}}\cdot U_{2^{n}}\cdot U_{2^{n}}\right)$.

To see that property 3 holds, if $r+s<2^{n+1}$, then the desired property follows easily from the definition of $U_{r}$. Suppose then that $r+s=2^{n+1}$. If $r=s=2^{n}$, then $U_{r}\cdot U_{s}=U_{2^{n}}\cdot U_{2^{n}}\subseteq U_{2^{n+1}}$ (note that $e\in U_{2^n}$). Therefore we may assume that $r>2^{n}$ and $s<2^{n}$. Then for any sequence $\langle t_{i}\rangle_{i\leq m}$ where $\sum_{i\leq m}t_{i}=r$, $\exists m_{0},m_{1}$ s.t. $\sum_{i=0}^{m_{0}}t_{i}\leq 2^{n}$, $\sum_{i=m_{0}+1}^{m_{1}}t_{i}\leq 2^{n}$ and $\sum_{i=m_{1}+1}^{m}t_{i}\leq 2^{n}$. By inductive hypothesis, we have that $\prod_{i=0}^{m_{0}}U_{t_{i}},\prod_{i=m_{0}+1}^{m_{1}}U_{t_{i}},\prod_{i=m_{1}+1}^{m}U_{t_{i}}\subseteq U_{2^{n}}$. Thus this gives us that $U_{r}\subseteq\left(U_{2^{n}}\right)^3$. Then note that $U_{s}\subseteq U_s\cdot U_{2^n-s}\subseteq U_{2^{n}}$ (again by IH and the fact that $e\in U_{2^n-s}$), and thus $U_{r}\cdot U_{s}\subseteq\left(U_{2^{n}}\right)^4\subseteq U_{2^{n+1}}$.

To check that property 1 holds, note that if $r<2^{n-1}$ then $U_r\subseteq U_r\cdot U_{2^{n+1}-r}\subseteq U_{2^{n+1}}$ by property 3 above, and so it is enough to check that $U_{2^{n+1}}$ is contained in an effectively compact set. By IH, $U_{2^{n}}$ is contained in some effectively compact set $K^{*}$, so $U_{2^{n+1}}$ is contained in $K_{m}\cup K_{m}^{-1}\cup\left(K^{*}\right)^4$, where $m$ is s.t. $(n,m)\in R$. It is not hard to check that $\left(K^{*}\right)^4$ is effectively compact, and hence
%
%By lemma \ref{lem:comprod}, $\prod_{i=1}^{4}K^{*}$ is effectively compact, and hence
$U_{2^{n+1}}$ is contained in some effectively compact set.

From the definition of $U_{r}$, for any $r$ where $2^{n}<r\leq 2^{n+1},\,U_{r}=U_{r}^{-1}$ and so property 2 holds as well.

Finally, since $\{B_{n}\}_{n\in\omega}$ is a basis for $(G,\tau)$, we have $\bigcup_{n}W_{2^{n}}=G$ and hence $\bigcup_{r}U_{r}=G$.

Now we define the metric $d$ on $G$ by $d(x,y)=\inf\{r\mid x^{-1}y\in U_{r}\}$. To see that $d$ is a metric, note that $d(x,y)=0$ gives that $\forall 0<r<2, x^{-1}y\in U_{r}=B_{\delta}(e,r)$, meaning that $\delta(x^{-1}y,e)=0$. Since $\delta$ is a metric, it has to be that $x=y$. By property 6, $d(x,x)=0$. The symmetry of $d$ and triangle inequality follow from property 2 and 3 of $\{U_{r}\}_{r\in\mathbb{Q}^{+}}$ respectively. $d$ is obviously left-invariant. It remains to check that $(G,d,\{\alpha_{i}\}_{i\in\omega})$ is a right-c.e. metric space, $G$ is effectively compatible with $(G,d,\{\alpha_{i}\}_{i\in\omega})$, and that $d$ is effectively proper. First of all, we have:

\begin{lemma}\label{lem:d_delta_equivalent}
For all $x,y\in G$, if $d(x,y)<2$ or $\delta(x,y)<2$ then $d(x,y)=\delta(x,y)$.
\end{lemma}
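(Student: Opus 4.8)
The plan is to reduce to the case $x=e$ and then read off the conclusion from the identity $U_r=B_\delta(e,r)$ for $r<2$ together with a monotonicity property of the family $\{U_r\}_{r\in\mathbb{Q}^+}$. First I would use left-invariance: both $d$ and $\delta$ are left-invariant, so setting $g=x^{-1}y$ we have $d(x,y)=d(e,g)=\inf\{r\in\mathbb{Q}^+ : g\in U_r\}$ and $\delta(x,y)=\delta(e,g)$. It therefore suffices to show that if $d(e,g)<2$ or $\delta(e,g)<2$, then $d(e,g)=\delta(e,g)$.

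Next I would record a monotonicity observation for the $U_r$: if $r'\le r$ in $\mathbb{Q}^+$ then $U_{r'}\subseteq U_r$. Indeed for $r'<r$ we have $r-r'\in\mathbb{Q}^+$, hence $e\in U_{r-r'}$ (property $(6)$), so $U_{r'}=U_{r'}\cdot\{e\}\subseteq U_{r'}\cdot U_{r-r'}\subseteq U_r$ by property $(3)$ ($U_{r'}\cdot U_{r-r'}\subseteq U_r$); the case $r'=r$ is trivial. Consequently the set $S_g:=\{r\in\mathbb{Q}^+ : g\in U_r\}$ is upward closed in $\mathbb{Q}^+$, and it is nonempty because $\bigcup_r U_r=G$ (property $(5)$); thus $d(e,g)=\inf S_g$ is the infimum of a nonempty set, and for every rational $r>d(e,g)$ there is some rational $s\in S_g$ with $s<r$. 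I would also note the immediate reformulation of property $(4)$: for rational $r<2$ one has $g\in U_r$ if and only if $\delta(e,g)<r$.

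The main argument is then two short estimates. Suppose $d(e,g)<2$. For any rational $r$ with $d(e,g)<r<2$, the previous paragraph gives a rational $s<r$ with $g\in U_s$, and since $s<2$ property $(4)$ yields $\delta(e,g)<s<r$; letting $r$ decrease to $d(e,g)$ gives $\delta(e,g)\le d(e,g)<2$. Conversely, suppose $\delta(e,g)<2$. For any rational $r$ with $\delta(e,g)<r<2$ we have $g\in B_\delta(e,r)=U_r$ by property $(4)$, so $d(e,g)\le r$; letting $r$ decrease to $\delta(e,g)$ gives $d(e,g)\le\delta(e,g)<2$. Combining: if the hypothesis is $d(e,g)<2$, the first estimate gives $\delta(e,g)\le d(e,g)<2$, and then the second (now applicable) gives $d(e,g)\le\delta(e,g)$, so $d(e,g)=\delta(e,g)$; the case where the hypothesis is $\delta(e,g)<2$ is symmetric. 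This establishes the lemma in the reduced form, hence for all $x,y$.

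I do not expect a genuine obstacle here: the lemma is a bookkeeping step whose role is precisely to transfer the good behaviour of $\delta$ near $e$ (right-c.e.-ness and compatibility with $\tau$) to $d$. The only points that need a little care are the reduction to $x=e$ via left-invariance, checking that $S_g$ is nonempty and upward closed so that the infimum is well behaved, and making sure that property $(4)$ is only ever invoked at radii strictly below $2$ — which is exactly what the strict inequalities in the hypothesis guarantee.
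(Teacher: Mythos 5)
Your proof is correct and follows essentially the same route as the paper: unwind the definition $d(x,y)=\inf\{r\mid x^{-1}y\in U_r\}$, use property $(4)$ ($U_r=B_\delta(e,r)$ for $r<2$) together with left-invariance of $\delta$, and reduce the case $\delta(x,y)<2$ to the case $d(x,y)<2$. The only difference is presentational — the paper does this in two lines of equalities, and your monotonicity/upward-closedness observation for the $U_r$ is harmless but not actually needed, since the infimum of a nonempty set already supplies the witnesses $s<r$.
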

\begin{proof}If $d(x,y)<2$ then $d(x,y)=\inf\{r<2\mid x^{-1}y\in U_r\}=\inf\{r<2\mid \delta(x,y)<r\}=\delta(x,y)$. If $\delta(x,y)<2$ then $x^{-1}y\in U_r$ for some $r<2$, which means that $d(x,y)<2$ and so by the above, $d(x,y)=\delta(x,y)$.
\end{proof}

Recall that the sequence $\{\alpha_i\}_{i\in\omega}$, apart from being used as special points for $d$ and $\delta$, are also uniformly computable points with respect to $\tau$. Then together with the fact that $U_r$ are $\tau$-effectively open (uniformly in the index $r$), one can obviously give a right-c.e. approximation to $d(\alpha_i,\alpha_j)$, uniformly in $i,j$.

%To enumerate the right cut of $d(x,y)$, consider the following procedure. Since each $U_{r}$ is effectively open, enumerate $f^{-1}(U_{r})$ for each $r\in\mathbb{Q}^{+}$, where $f(g,h)=g^{-1}h$. If $(V_{0},V_{1})$ is enumerated by $f^{-1}(U_{r})$ where $V_{0}$ is enumerated by $N^{x}$ and $V_{1}$ is enumerated by $N^{y}$, then enumerate $r$ into the right cut of $d(x,y)$. To see that the procedure is correct, first note that if $r$ is enumerated, then $\exists V_{0},V_{1}$ s.t. $V_{0}^{-1}V_{1}\subseteq U_{r}$ where $x\in V_{0}$ and $y\in V_{1}$, then $x^{-1}y\in U_{r}$ and hence $d(x,y)\leq r$. Now let $\varepsilon>0$ be given. By definition of $d(x,y)$, there should be some rational $r$ s.t. $r<d(x,y)+\varepsilon$, and $x^{-1}y\in U_{r}$. It must then be that at some finite stage $(V_{0},V_{1})\subseteq U_{r}$ is witnessed where $V_{0}$ is enumerated by $N^{x}$ and $V_{1}$ is enumerated by $N^{y}$, since $(x,y)\in f^{-1}(U_{r})$. Thus $r$ will be enumerated by the procedure.
%
%
By Lemma \ref{lem:d_delta_equivalent}, we have $\overline{(\{\alpha_{i}\}_{i\in\omega},d)}=\overline{(\{\alpha_{i}\}_{i\in\omega},\delta)}\supset G$, so it is sufficient to show that $\tau_d$ and $\tau_\delta$ are effectively compatible on $G$.
Let $B_{d}(\alpha_{i},r)$ be given. Since $d$ is right-c.e., $\ll_d$ is a c.e. relation, where $\ll_d$ is the usual formal inclusion relation for basic metric balls. Consider the $\tau_\delta$-effectively open set consisting of all $B_{\delta}(\alpha_{j},q)$ such that $q<2$ and $B_{d}(\alpha_{j},q)\ll_d B_{d}(\alpha_{i},r)$. This shows that $B_{d}(\alpha_{i},r)$ is $\tau_\delta$-effectively open. To show that each $B_{\delta}(\alpha_{i},r)$ is $\tau_d$-effectively open is similar.

% For each pair $(j,q)$ where $q<2$, check if $B_{d}(\alpha_{j},q)\ll B_{d}(\alpha_{i},r)$. If such a ball is found, then enumerate $B_{\delta}(\alpha_{j},q)$ into the $\tau_{\delta}$-open name for $B_{d}(\alpha_{i},r)$. Since $\bigcup B_{d}(\alpha_{j},q)=B_{d}(\alpha_{i},r)$, and for $q<2,\,B_{d}(\alpha_{j},q)=\{x\mid d(e,\alpha_{j}^{-1}x)<q\}=\{x\mid\delta(e,\alpha_{j}^{-1}x)<q\}= B_{\delta}(\alpha_{j},q)$, then the procedure is correct.
%Conversely, given $B_{\delta}(\alpha_{i},r)$, search for pairs $(j,q)$ s.t. $B_{\delta}(\alpha_{j},q)\ll B_{\delta}(\alpha_{i},r)$ and $q<2$. When such a pair is found, enumerate $B_{d}(\alpha_{j},q)$ into the $\tau_{d}$-open name for $B_{\delta}(\alpha_{i},r)$. For the same reasons as before, this procedure must be correct. Then $(G,d,\{\alpha_{i}\}_{i\in\omega})$ is effectively compatible with $(G,\delta,\{\alpha_{i}\}_{i\in\omega})$.

Finally to check that $d$ is effectively proper, we note that by definition of $d$, $B_{d}(e,r)\subseteq U_{r}\subseteq U_{2^{n}}$ for some sufficiently large $n$. Given a closed set $F$ and an open ball $B_{d}(\alpha_{i},q)\supseteq F$, take $r=d(\alpha_{i},e)[0]+q$, where $d(\alpha_{i},e)[0]$ is the first rational enumerated by the right cut of $d(\alpha_{i},e)$, then note that $F\subseteq B_{d}(\alpha_{i},q)\subseteq B_{d}(e,r)\subseteq U_{r}$. For any $n>\log_{2}(r),\,F\subseteq U_{2^{n}}\subseteq K$, where $K$ is $\tau$-effectively compact. But this means that $K$ is also $\tau_{d}$-effectively compact and a compact name can be found uniformly in $n$.
%Since $\tau_{\delta}$ and $\tau_{d}$ are effectively compatible, then $K$ is also effectively $\tau_{d}$-compact.
%Then by lemma \ref{lem:comsub}, $F$ is effectively compact.
\end{proof}

\subsection{Consequences}
In the first corollary, we do not assume that the group is effectively locally compact. The first corollary uses   Struble's original result  and  Theorem \ref{thm:completemet}, and the second corollary follows from  Theorem~\ref{thm:effplig} and Theorem~\ref{thm:completemet}.

\begin{corollary}\label{cor:locallycompact}
Let $G$ be a computable topological group that is locally compact. Then $G$ is effectively compatible with a right-c.e.~Polish group.
\end{corollary}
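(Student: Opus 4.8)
The plan is to combine the effective Birkhoff--Kakutani theorem (Theorem~\ref{thm:effbk}) with the ``completion'' result Theorem~\ref{thm:completemet}, the only missing ingredient being that the left-invariant metric manufactured in Theorem~\ref{thm:effbk} is automatically \emph{complete} when $G$ is locally compact. So first I would apply Theorem~\ref{thm:effbk} to the computable topological group $G$ to obtain a right-c.e.~compatible left-invariant metric $d$; write $\tau_d$ for the induced topology, which coincides with the original topology $\tau$. Everything then reduces to checking the hypothesis of Theorem~\ref{thm:completemet}: that $(G,d)$ is complete.

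The heart of the proof is the following classical fact, and this is where Struble's theorem enters: \emph{any left-invariant metric compatible with the topology of a Hausdorff, second countable, locally compact group is complete}. One clean route: by Struble's theorem (Theorem~\ref{thm:Struble}) the group $G$ carries a compatible left-invariant \emph{proper} metric $\rho$, and a proper metric is always complete, since a $\rho$-Cauchy sequence is $\rho$-bounded, hence contained in a closed $\rho$-ball, hence in a $\tau$-compact set, and a Cauchy sequence that clusters at a point converges to that point. To transfer completeness from $\rho$ to $d$, let $(x_n)$ be $d$-Cauchy; by left-invariance $x_N^{-1}x_n \to e$ in $\tau$, so by compatibility of $\rho$ the tail of $(x_n)$ lies in a fixed closed $\rho$-ball $B^{\leq}_\rho(x_N,1)$, which is $\tau$-compact by properness; extracting a $\tau$-convergent subsequence and using that $(x_n)$ is $d$-Cauchy forces $d$-convergence of the whole sequence. (One can also bypass $\rho$ altogether and argue directly from local compactness: fix a compact neighbourhood $K$ of $e$, choose $\varepsilon$ with $B_d(e,\varepsilon)\subseteq K$, and observe that a $d$-Cauchy sequence eventually lies in the compact set $x_N K$.)

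With completeness of $d$ in hand, Theorem~\ref{thm:completemet} applies and yields a right-c.e.~Polish presentation of $G$; moreover, inspecting its proof (via Lemmas~\ref{lemma:effcom} and~\ref{fact1}) the dense sequence $\{\alpha_i\}$ produced there makes $\tau$ and $\tau_d$ \emph{effectively compatible}, the metric $d$ stays left-invariant and right-c.e., and the group operations remain computable with respect to $\tau_d$. Hence $G$ is effectively compatible with a right-c.e.~Polish \emph{group}, as required. The main (and essentially the only) obstacle is the completeness step; the rest is assembled from results already established. It is worth emphasising that no form of \emph{effective} local compactness is used here --- that stronger hypothesis is only needed for the sharper Theorem~\ref{thm:effplig} and its corollary producing a \emph{proper} right-c.e.~metric.
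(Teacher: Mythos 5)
Your proof is correct, and it shares the paper's overall skeleton: apply Theorem~\ref{thm:effbk}, show the resulting left-invariant right-c.e.\ metric is complete using local compactness, then invoke Theorem~\ref{thm:completemet} (whose proof, via Lemmas~\ref{lemma:effcom} and~\ref{fact1}, indeed gives effective compatibility and keeps the operations computable). The one genuine difference is how completeness is obtained. The paper does not merely cite Struble's theorem: it uses Struble's \emph{construction} applied to the metric $\delta$ from Theorem~\ref{thm:effbk}, producing a proper metric $d$ that agrees with $\delta$ whenever either distance is small (this is the role of Lemma~\ref{lem:d_delta_equivalent}); properness gives completeness of $d$, and local agreement transfers it to $\delta$. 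You instead prove the underlying classical fact directly --- a $d$-Cauchy sequence eventually lies in a translate $x_N K$ of a compact neighbourhood $K$ of $e$ by left-invariance, so it has a convergent subsequence and hence converges --- which is more elementary and self-contained, needing nothing about Struble's construction (your parenthetical argument bypasses Struble altogether; your first transfer argument via an auxiliary proper metric $\rho$ has a harmless wording slip, since for fixed $N$ one only gets that the tail of $x_N^{-1}x_n$ lies in a small ball rather than that it converges to $e$, but the intended and correct choice of constants is clear). What the paper's route buys is that the local-agreement lemma is set up anyway for Theorem~\ref{thm:effplig} and Corollary~\ref{cor:properstuff}; what your route buys is a shorter, independent completeness argument for this corollary.
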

\begin{proof}
Let $\delta$ be the left-invariant right-c.e.~compatible metric produced in Theorem \ref{thm:effbk}. Struble (see  \cite{Struble}) used $\delta$ to produce another metric $d$ on $G$ such that $d$ is compatible with $\delta$ and $d$ is a proper metric. However (see Lemma \ref{lem:d_delta_equivalent}) $d$ and $\delta$ are equal whenever $d(x,y)<2$ or $\delta(x,y)<2$. Since every proper metric space is complete, this means that $(G,\delta)$ is complete. By Theorem \ref{thm:completemet}, $G$ is effectively compatible with a right-c.e. metric space.
\end{proof}

We see that the metrics $\delta$ and $d$ are equi-complete and effectively compatible assuming that the group is effectively locally compact, by Theorem~\ref{thm:effplig}.
 Thus, by Theorem \ref{thm:completemet} combined with Theorem~\ref{thm:effplig}, we have:

\begin{corollary}\label{cor:properstuff}
If $G$ is an  effectively locally compact computable topological group. Then $G$ admits a right-c.e.~Polish presentation in which the metric is (effectively) proper and left-invariant.
\end{corollary}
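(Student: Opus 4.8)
The plan is simply to assemble pieces that are already in place. First I would invoke Theorem~\ref{thm:effplig}: since $G$ is an effectively locally compact computable topological group, it is effectively compatible with a right-c.e.~metric space $(\{\alpha_i\}_{i\in\omega},d)$ in which $d$ is left-invariant and effectively proper, and---as recorded at the start of the proof of that theorem---the $\alpha_i$ are uniformly computable points with respect to the original topology $\tau$. What remains is only to check that this data is a right-c.e.~\emph{Polish} presentation, i.e.\ that $d$ is complete and that $(\{\alpha_i\}_{i\in\omega},d)$ has completion $G$, and that the $\alpha_i$ are uniformly computable with respect to $\tau_d$.

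For completeness I would argue that the metric $d$ constructed in the proof of Theorem~\ref{thm:effplig} is genuinely proper: there $\overline{B_d(e,r)}$ is contained in $\overline{U_{2^n}}$ for any $n>\log_2 r$, and $\overline{U_{2^n}}$ is a closed subset of an (effectively, hence classically) compact set and so is itself compact; by left-invariance every closed bounded ball is then compact. A proper metric space is complete (a Cauchy sequence is bounded, hence lies in a compact closed ball, hence has a convergent subsequence and therefore converges), so $(G,d)$ is complete. Alternatively---this is the route indicated by the remark following Theorem~\ref{thm:effplig}---one uses that $G$ is locally compact, so by (the proof of) Corollary~\ref{cor:locallycompact} the metric $\delta$ produced in Theorem~\ref{thm:effbk} is complete, and by Lemma~\ref{lem:d_delta_equivalent} the metrics $d$ and $\delta$ agree whenever either is $<2$; hence every $d$-Cauchy sequence is eventually $\delta$-Cauchy, converges in $\delta$, and therefore converges in $d$ because $\tau_d=\tau_\delta$. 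Either way $(\{\alpha_i\}_{i\in\omega},d)$ has completion $G$, so this is a right-c.e.~Polish space.

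Finally, the $\alpha_i$ are uniformly computable points with respect to $\tau_d$ by construction: they are the special points of the right-c.e.~metric space and $d(\alpha_i,\alpha_j)$ is right-c.e.~uniformly in $i,j$, as verified inside the proof of Theorem~\ref{thm:effplig}. Effective compatibility of $\tau$ and $\tau_d$ (again Theorem~\ref{thm:effplig}) transfers effective continuity and effective openness of multiplication and inversion from $\tau$ to $\tau_d$ by composing the relevant enumeration operators, so $(G,d,\{\alpha_i\}_{i\in\omega})$ is a right-c.e.~Polish group whose metric is left-invariant and (effectively) proper. I do not expect a genuine obstacle: the real work is done in Theorem~\ref{thm:effplig} and Theorem~\ref{thm:completemet}, and the only new observation needed is the equi-completeness of $d$ and $\delta$, which is immediate from Lemma~\ref{lem:d_delta_equivalent} together with completeness of $\delta$ in the locally compact case (or, even more directly, from properness of $d$).
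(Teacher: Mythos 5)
Your proposal is correct and follows essentially the paper's own route: the paper derives this corollary exactly by combining Theorem~\ref{thm:effplig} with Theorem~\ref{thm:completemet} (via the proof of Corollary~\ref{cor:locallycompact}), using Lemma~\ref{lem:d_delta_equivalent} to see that $d$ and $\delta$ are equi-complete and then transferring the group operations through effective compatibility. Your alternative observation that properness of $d$ already forces completeness is a harmless shortcut consistent with the same argument.
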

In particular, effective local compactness (and the effective compatibility of $\delta$ and $d$ in the notation above) implies the proper metric in the corollary above is also effectively locally compact. But of course, being effectively proper is nicer than just being  effectively locally compact.

\

A natural question arises whether we can strengthen these corollaries further and additionally assume that the metric is computable in each of the corollaries above.
In the next section we show that the answer is `no' in both cases. In fact, our counter-examples corresponding to Corollaries~\ref{cor:locallycompact} and~\ref{cor:properstuff}
are compact and discrete, respectively.

\section{Comparing and separating the notions}\label{sec:sep}

In Sections  \ref{Sec:ebkpoints} and \ref{proper:section} we produced (left-invariant) right-c.e.~Polish presentations of locally compact groups. We now aim to show that this is tight, i.e.~we show that  there are computable topological (Polish) groups which are not effectively compatible with any computable metric space.
In this section we give two examples, one discrete and one profinite. In the process of proving the results we will establish several lemmas that are perhaps more valuable (or interesting) than the counter examples. %These lemma scharacterize the notions of a computable Polish and a right-c.e.~Polish presentation in terms of  other notions of computable presrntability from the literature.

\subsection{Discrete groups} Recall that a computable presentation of a discrete countable group is its isomorphic copy of the form $F/ H$, where $F$ is the standard decidable presentation of the free group upon omega generators, and $H$ is its computable normal subgroup~\cite{abR,Ma61}.  If $H$ is merely c.e., then we say that the group is `c.e.-presented'. (These correspond to `recursive' groups with solvable and not necessarily solvable Word Problem, respectively.) We can pick representatives in each class and assume that the domain of a computable group is $\mathbb{N}$; then the group operations are computable (as functions on $\mathbb{N}$). In the c.e.-presented case, we have to also introduce a computably enumerable congruence on $\mathbb{N}$, but we can still keep the operations computable. The difference is that two elements can be at some stage declared equal.
Note that this is very similar to the difference between computable and right-c.e.~Polish presentations of a group. This intuition is made formal below.

%We first prove that the original topology and the topology generated by the induced metric are effectively compatible, i.e. each basic open set is effectively open in the other topology.

\begin{lemma}\label{lem:discretecomp}
A countable discrete group is computably presentable iff it
admits a computable Polish presentation. % discrete computable metric where the group operations are effectively continuous w.r.t. the topology generated by the metric.
\end{lemma}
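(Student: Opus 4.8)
The plan is to prove both directions, each by extracting the appropriate kind of presentation from the other.

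For the forward direction, suppose $G$ is a countable discrete group with a computable presentation in the sense of Mal'cev--Rabin; so we may assume its domain is $\mathbb{N}$ with computable group operations (and decidable equality). To produce a computable Polish presentation, equip $G$ with the discrete metric $d(x,y) = 1$ if $x \neq y$ and $0$ otherwise. Since equality is decidable, $d$ is a computable metric on the countable dense set $(x_i)_{i\in\omega}$ enumerating $\mathbb{N}$; the space is already complete because it is discrete, so this is a genuine computable Polish presentation. The group operations $\cdot$ and ${}^{-1}$, being computable as functions on $\mathbb{N}$, are computable maps between the computable Polish spaces in the sense of Definition~\ref{def:opencont} (a name for a point in a discrete computable Polish space effectively determines the point, and conversely). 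Hence $G$ has a computable Polish presentation.

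For the reverse direction, suppose $G$ is a countable discrete group with a computable Polish presentation with special points $(\alpha_i)_{i\in\omega}$ and computable metric $d$. The key point is that in a discrete space, each element is an isolated point, so $\{g\}$ is open for every $g \in G$. I would first argue that from the computable Polish data one can, uniformly, enumerate for each $i$ a positive rational $r_i$ with $d(\alpha_i,\alpha_j) \geq r_i$ for all $j \neq i$ — but this needs care, since a priori we cannot decide when $d(\alpha_i,\alpha_j) = 0$. Instead, the cleaner route: since $G$ is countable and discrete and the $\alpha_i$ are dense, every element of $G$ equals some $\alpha_i$; as equality in $G$ is what we want to decide, consider the relation $\alpha_i = \alpha_j$, which holds iff $d(\alpha_i,\alpha_j) = 0$. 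Because $d$ is computable, $d(\alpha_i,\alpha_j) = 0$ is a $\Pi^0_1$ condition, and because the space is discrete it is also $\Sigma^0_1$: $\alpha_i \neq \alpha_j$ iff there is a rational $q > 0$ with $d(\alpha_i,\alpha_j) > q$, and discreteness together with the fact that $d$ is a genuine (complete, compatible) metric on a discrete space forces every nonzero distance to be bounded below — more precisely one uses that the computable-Polish structure makes the isolating radii effectively available (e.g. via effective local compactness in the trivial discrete sense, or directly: the basic ball $B_d(\alpha_i, q)$ for small enough rational $q$ equals $\{\alpha_i\}$, and one can search for such a $q$ because the $\tau$-open set $\{\alpha_i\}$ has a c.e.\ name). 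Once equality is decidable, we pick canonical representatives to get a computable transversal, and the group operations, being computable maps on the computable Polish space, restrict to computable functions on this transversal; this yields a Mal'cev--Rabin computable presentation $F_\omega/H$ with $H$ computable.

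The main obstacle is the subtle step in the reverse direction of showing that equality among the special points is decidable, i.e.\ that the nonzero distances are effectively bounded away from zero. This is exactly the discrete instance of the general phenomenon separating computable from right-c.e.\ Polish presentations, and it is where discreteness must be used essentially: for a right-c.e.\ metric the same argument fails (nonzero distances are only semi-decidably positive from above), which is precisely why the companion statement for c.e.-presented groups versus right-c.e.\ Polish groups holds with ``c.e.'' in place of ``computable''. I would handle it by invoking that in a computable topological presentation of a discrete space each singleton is effectively open (it has a c.e.\ name consisting of basic balls), so one can search for a basic ball around $\alpha_i$ that contains no other special point, and for a computable metric this search is guaranteed to terminate and delivers the required isolating rational radius uniformly in $i$.
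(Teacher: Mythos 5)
Your forward direction is fine and is essentially the paper's argument: the discrete $\{0,1\}$-metric is computable because equality is decidable, and the operations are then easily seen to be effectively continuous. The problem is the reverse direction, at exactly the step you flagged. You assert that the singleton $\{\alpha_i\}$ has a c.e.\ name, equivalently that one can \emph{search} for a rational $q$ with $B_d(\alpha_i,q)=\{\alpha_i\}$. But the condition ``$B_d(\alpha_i,q)$ contains no special point other than $\alpha_i$'' is $\Pi^0_1$ (for all $j$, either $d(\alpha_i,\alpha_j)=0$ or $d(\alpha_i,\alpha_j)\geq q$), so no effective search can ever certify a correct $q$; nothing in the data of a computable Polish presentation makes singletons effectively open. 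Indeed this fails already for discrete computable Polish \emph{spaces}: take special points $x_{2i},x_{2i+1}$ with $d(x_{2i},x_{2i+1})=2^{-s}$ if $i$ enters the halting set at stage $s$ and $=0$ otherwise, and all distances between distinct pairs equal to $1$. This is a complete, discrete, computable metric space in which equality of special points is $\Pi^0_1$-complete, so isolating radii cannot be found uniformly and singletons are not uniformly effectively open. Consequently your key ingredient can only come from the \emph{group} structure, yet your argument never invokes the effective continuity of the operations; as written it is circular (it assumes an effectivity property of the presentation that is essentially equivalent to the conclusion). The paper itself warns that ``the isolating radius for each $\alpha_i$ might not be'' computable.

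What the paper does instead is fix, \emph{non-uniformly}, a single rational $r>0$ with $B_d(e_G,r)=\{e_G\}$ --- a finite amount of non-effective advice, which is legitimate --- and then uses the effectively continuous operations to pin points down exactly: to compute $\alpha_i^{-1}$, enumerate the preimage of $B_d(e_G,r)$ under multiplication until a basic box $(B,B')$ with $\alpha_i\in B$ appears, and the centre of $B'$ \emph{is} $\alpha_i^{-1}$ because the product lies in the isolating ball; similarly the centre of $B''$ with $B\cdot B'\cdot (B'')^{-1}\subseteq B_d(e_G,r)$, $\alpha_i\in B$, $\alpha_j\in B'$, is exactly $\alpha_i\alpha_j$. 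Equality is then decided by translating to the identity and computing the distance to $e_G$ to precision $r/2$. Note that your closing step --- that the operations ``restrict to computable functions on the transversal'' --- also does not follow from decidable equality alone: identifying \emph{which} special point equals $\alpha_i\cdot\alpha_j$ is again a $\Pi^0_1$ matter without the isolating-ball trick. So the reverse direction needs to be reworked along these lines, using one fixed isolating neighbourhood of $e_G$ together with effective continuity of the group operations, rather than uniformly computed isolating radii.
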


\begin{proof}
Suppose that a group $G$ is computably presentable, i.e. $G$ is generated by $\{\alpha_i\}_{i\in\omega}$ on which the group operations and the equality relation are computable. We consider the standard discrete metric defined on the elements of the computable presentation of $G$, i.e. $d(\alpha_i,\alpha_j)=0$ iff $\alpha_i=\alpha_j$ and $d(\alpha_i,\alpha_j)=1$ otherwise. Since testing of equality is computable by assumption, the metric is also computable.

To check that $\cdot$ is effectively continuous with respect to $\tau_d$, given $\alpha_k\in G$ and $r\in\mathbb{Q}^{+}$, if $r> 1$, we simply enumerate $G\times G$ as the preimage. If $r\leq 1$, then find all pairs $\alpha_i,\alpha_j$ such that $\alpha_i\cdot\alpha_j=\alpha_k$ and enumerate $B_{d}(\alpha_{i},1)\times B_{d}(\alpha_{j},1)$. To see that $^{-1}$ is effectively continuous, it is similar.
%given a ball $B_{d}(\alpha_{i},r)$, search for the preimage of $B_{d}(e_{G},r)$ under $\cdot$ and if $B_{d}(\alpha_{i},q)\times B_{d}(\alpha_{j},q')$ or $B_{d}(\alpha_{j},q')\times B_{d}(\alpha_{i},q)$ shows up, enumerate $B_{d}(\alpha_{j},q)$ into the preimage of $B_{d}(\alpha_{i},r)$. We know such pairs must show up eventually since every element in a group has an inverse.

Conversely suppose that $G=\{\alpha_i\}_{i\in\omega}$ is a countable group and there is a computable discrete metric $d$ defined on $G$ in which the group operations are effectively continuous with respect to $\tau_d$. Even though $d$ is computable, the isolating radius for each $\alpha_i$ might not be. Nonetheless we (not effectively) fix some rational $r>0$ for which $B_d(e_G,r)$ which isolates $e_{G}$. Since the metric is computable, we can decide given any $\alpha_{i}$, whether or not $e_{G}=\alpha_{i}$, by computing $d(e_{G},\alpha_{i})$ to an accuracy of $\frac{r}{2}$. Therefore equality in $G$ is computable if we can show that the group operations are computable.

To compute $\alpha_{i}^{-1}$, enumerate the preimage of $B_{d}(e_{G},r)$ under $\cdot$ and wait for $(B_{d},B_{d}')$ to show up where $\alpha_i\in B_d$.
%Such balls must show up since every element has an inverse, and since $B_{d}(e_{G},r)=\{e_{G}\}$, then it must be that the preimages are tuples of singletons.
Then the center of $B_d'$ is necessarily the inverse of $\alpha_{i}$, as $B_{d}(e_{G},r)$ isolates $e_G$.
% (w.l.o.g. $\alpha_{i}\in B$).
Now given $\alpha_{i},\alpha_{j}$, to compute $\alpha_{i}\cdot\alpha_{j}$, search for three basic metric balls $B_d,B_d',B_d''$ such that $B_d\cdot B_d'\cdot (B_d'')^{-1}\subseteq B_{d}(e_{G},r)$ and where $\alpha_i\in B_d$ and $\alpha_j\in B_d'$. Then the center of $B_d''$ is necessarily equal to $\alpha_i\cdot\alpha_j$.
%
%
%enumerate the preimage of $B_{d}(e_{G},r)$ under the function $f:(x,y,z)\to x\cdot y\cdot z^{-1}$. It again must be that the triple of sets $(B,B',B'')$ which show up in the preimage are triples of singletons, and at some stage $\alpha_{i}$ and $\alpha_{j}$ must be the centers of $B$ and $B'$ respectively which gives that the center of $B''$ is $\alpha_{i}\cdot\alpha_{j}$. Then the group operations are computable and testing of equality between elements is also computable.
\end{proof}

\begin{lemma}\label{lem:discretece}
A countable discrete group is c.e. presentable iff it admits a right-c.e.~Polish presentation.

% a discrete right-c.e. metric where the group operations are effectively continuous w.r.t. the topology generated by the metric.
\end{lemma}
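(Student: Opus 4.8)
The plan is to mirror the structure of the proof of Lemma~\ref{lem:discretecomp}, replacing ``computable equality'' with ``c.e.\ equality'' and ``computable metric'' with ``right-c.e.\ metric'' throughout. Recall that a c.e.-presented discrete group is a quotient $F_\omega/H$ with $H$ computably enumerable; equivalently, a structure with domain $\mathbb{N}$, computable group operations, and a c.e.\ congruence $\sim$ under which two elements may be declared equal at some stage (but never ``un-declared''). The key observation is that this is \emph{exactly} the situation of a right-c.e.\ discrete metric: the discrete metric $d(\alpha_i,\alpha_j)$ with value $0$ if $\alpha_i\sim\alpha_j$ and $1$ otherwise is right-c.e.\ precisely when the right cut $\{r\in\mathbb{Q}: d(\alpha_i,\alpha_j)<r\}$ is c.e.\ uniformly, and this cut contains some rational below $1$ iff $\alpha_i\sim\alpha_j$, which is a c.e.\ event. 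So the equality relation being c.e.\ is equivalent to the discrete metric being right-c.e.

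For the forward direction, suppose $G=F_\omega/H$ is c.e.-presented, so we have a computable copy with domain $\mathbb{N}$, computable operations, and c.e.\ congruence $\sim$. Take the discrete metric as above. It is right-c.e.: on input $i,j$ we enumerate $1$ (and everything above it) into the right cut immediately, and enumerate all rationals in $(0,1)$ into the right cut as soon as we see $\alpha_i\sim\alpha_j$. We then need to check that $\cdot$ and $^{-1}$ are effectively continuous with respect to $\tau_d$. This is done exactly as in Lemma~\ref{lem:discretecomp}: given a basic ball $B_d(\alpha_k,r)$, if $r>1$ the preimage is everything; if $r\le 1$, we must enumerate (a name for) the preimage of $B_d(\alpha_k,r)=\{\alpha:\alpha\sim\alpha_k\}$ under multiplication, i.e.\ list all pairs $(B_d(\alpha_i,1),B_d(\alpha_j,1))$ with $\alpha_i\alpha_j\sim\alpha_k$ --- and the latter is a c.e.\ condition since the operation is computable and $\sim$ is c.e. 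The same argument handles $^{-1}$.

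For the converse, suppose $G=\{\alpha_i\}_{i\in\omega}$ carries a right-c.e.\ discrete metric $d$ in which the group operations are effectively continuous w.r.t.\ $\tau_d$. As in Lemma~\ref{lem:discretecomp}, non-effectively fix a rational $r>0$ such that $B_d(e_G,r)$ isolates $e_G$. Now equality $\alpha_i=e_G$ is a c.e.\ condition: since $d$ is right-c.e., we can enumerate the right cut of $d(e_G,\alpha_i)$, and $\alpha_i=e_G$ iff some rational $<r$ appears in that cut. Hence equality in $G$ is c.e.\ (after fixing an identity representative). To build the c.e.\ presentation it remains to realize the group operations computably on representatives: to compute (a representative of) $\alpha_i^{-1}$, enumerate the preimage of $B_d(e_G,r)$ under $\cdot$ and wait for a pair $(B_d,B_d')$ with $\alpha_i\in B_d$ to appear; the center of $B_d'$ is then a representative of $\alpha_i^{-1}$ because $B_d(e_G,r)$ isolates $e_G$. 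Similarly, to compute $\alpha_i\cdot\alpha_j$, search for basic balls $B_d,B_d',B_d''$ with $B_d\cdot B_d'\cdot(B_d'')^{-1}\subseteq B_d(e_G,r)$, $\alpha_i\in B_d$, $\alpha_j\in B_d'$, and output the center of $B_d''$. This yields a structure with computable operations and a c.e.\ equality relation --- i.e.\ a c.e.\ presentation of $G$ --- after the standard step of passing to $F_\omega/H$ by mapping generators to the $\alpha_i$.

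The main subtlety --- and the only place the argument genuinely differs from Lemma~\ref{lem:discretecomp} --- is that \emph{we can no longer test inequality}. In the computable case one decides $\alpha_i=e_G$ by computing $d(e_G,\alpha_i)$ to precision $r/2$; here the right-c.e.\ metric only lets us \emph{confirm} equality, never disconfirm it. Thus one must be careful that in the operation-computing searches above we only ever wait for \emph{positive} events (a ball showing up in a c.e.-enumerated preimage, an inclusion $B_d\cdot B_d'\cdot(B_d'')^{-1}\subseteq B_d(e_G,r)$ being verified), which is indeed the case since effective continuity gives c.e.\ names of preimages and formal inclusion of discrete balls is a c.e.\ relation when $d$ is right-c.e. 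One should also note the standard point that the search for $B_d''$ terminates because $\alpha_i\alpha_j$ does lie in some basic ball contained in $B_d(e_G,r)\cdot$(translate), guaranteed by effective continuity applied to the open set $\alpha_i\alpha_j\cdot B_d(e_G,r)$ --- the argument is identical to the one already given in Lemma~\ref{lem:discretecomp}, only with ``c.e.''\ in place of ``computable''.
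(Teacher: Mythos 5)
Your proposal is correct and follows essentially the same route as the paper's proof: the discrete $0$--$1$ metric is right-c.e.\ exactly when equality is c.e., effective continuity of the operations is checked by the same enumeration as in Lemma~\ref{lem:discretecomp}, and the converse uses a (non-effectively fixed) isolating ball around $e_G$, the c.e.\ predicate ``$\alpha_i\in B_d(\alpha_j,r)$'', and the same searches for the inverse and product, yielding computable operations and c.e.\ equality. Your explicit remark that all searches involve only positive (c.e.) events is precisely the point the paper records when it notes that the membership predicate remains c.e.\ in the right-c.e.\ setting.
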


\begin{proof}
Suppose that a discrete group $G$, is c.e. presentable, i.e. $G$ is generated by $\{\alpha_i\}_{i\in\omega}$ on which the group operations are computable but the equality relation is c.e. We consider the standard discrete metric defined on the elements of the computable presentation of $G$, i.e. $d(\alpha_i,\alpha_j)=0$ iff $\alpha_i=\alpha_j$ and $d(\alpha_i,\alpha_j)=1$ otherwise. Since equality is c.e. by assumption, the metric is right-c.e.
%
%
%Then define the right-c.e. metric as follows. Given $g,h\in G$ check if $g=h$. Since $G$ is c.e. presentable, then $g=h$ is a c.e. relation. While waiting, enumerate $1$ into the right cut of $d(g,h)$, and when $g=h$ is witnessed, enumerate $0$ into the right cut of $d(g,h)$. It is then clear that this procedure enumerates the right cut of $d(g,h)$ which is $0$ iff $g=h$, and $1$ otherwise.
To see that the operations are effectively continuous w.r.t. the topology induced by $d$, repeat the same procedure as in Lemma \ref{lem:discretecomp}.

Conversely suppose that $G=\{\alpha_i\}_{i\in\omega}$ is a countable group and there is a right-c.e. discrete metric $d$ defined on $G$ in which the group operations are effectively continuous with respect to $\tau_d$. Again we fix some rational $r>0$ such that $B_d(e_G,r)$ which isolates $e_{G}$. To see that the group operations are computable we follow exactly as in Lemma \ref{lem:discretecomp}, noting that the predicate ``$\alpha_i\in B_d(\alpha_j,r)$'' is still c.e. Since the metric is right-c.e., and the operations are computable, equality in $G$ is c.e.
%
%
%
%Since the metric is computable, we can decide given any $\alpha_{i}$, whether or not $e_{G}=\alpha_{i}$, by computing $d(e_{G},\alpha_{i})$ to an accuracy of $\frac{r}{2}$. Therefore equality in $G$ is computable if we can show that the group operations are computable. The fact that the group
%
%
%
%Suppose now then that $G$ admits a compatible right-c.e. metric where the group operations are effectively continuous w.r.t. the topology generated by $d$. Again, since the topology is discrete, $e_{G}$ must be isolated, and hence $\exists r>0$ s.t. $B_{d}(e_{G},r)=\{e_{G}\}$. To see that the group operations are computable, we can adopt the same procedure as in the previous lemma.
%It remains to check that equality testing is c.e.. Given any special point $\alpha_{i}$, simply right-c.e. approximate $d(\alpha_{i},e_{G})$. If $d(\alpha_{i},e_{G})[s]<r$ at some stage $s$, then we say $\alpha_{i}=e_{G}$. This process is clearly c.e..
\end{proof}

\begin{corollary}\label{cor:ce}
There exists a computable topological discrete abelian group  (thus, right-c.e.~Polish)
that is not topologically isomorphic to any computable Polish group.

%There exists a computable topological group which is Hausdorff that is effectively compatible with a right-c.e. metric space but not effectively compatible with any computable metric space.
%There exists a computable topological group which is compatible with a right-c.e. metric space and not with a computable one.
\end{corollary}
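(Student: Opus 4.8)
The plan is to derive the corollary by combining the two preceding lemmas with a classical separation result for abelian groups. First observe that for a discrete group ``topologically isomorphic'' means exactly ``abstractly isomorphic''. Hence, by Lemma~\ref{lem:discretecomp} and Lemma~\ref{lem:discretece}, the statement to be proved is \emph{equivalent} to the purely algebraic assertion that there is a c.e.-presented abelian group which is not isomorphic to any computably presented abelian group: a discrete group has a computable (resp.\ right-c.e.) Polish presentation precisely when it is computably (resp.\ c.e.) presentable in the sense of Mal'cev and Rabin, and every right-c.e.\ Polish group is in particular a computable topological group. So the topological content of the corollary is a direct translation through these two lemmas, and the real work is to produce the group.

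For that I would invoke the classical fact, essentially due to Khisamiev, that such a group exists (see, e.g., \cite{Hisa2}; cf.\ also \cite{Khi}). In the write-up I would recall the shape of an example so that the mechanism is visible: one may take a reduced abelian \(p\)-group of the form \(G=\bigoplus_{n}\mathbb{Z}_{p^{f(n)}}\) for a carefully chosen \emph{limitwise monotonic} function \(f\); the limitwise monotone approximation to \(f\) exhibits \(G\) as a computable direct limit of finite groups with uniformly computable embeddings, hence \(G\) is c.e.-presentable, while \(f\) is arranged, by a finite-injury diagonalization, so that the induced Ulm invariants \(k\mapsto\#\{n:f(n)=k\}\) cannot be recovered by any computable presentation. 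The point to emphasise is that the non-computability has to be carried by the \emph{isomorphism type} of \(G\), not merely by a presentation: the naive candidate \(\bigoplus_{n\in A}\mathbb{Z}_{p^2}\,\oplus\,\bigoplus_{n\notin A}\mathbb{Z}_{p}\) for a non-computable c.e.\ set \(A\) fails, because as an abstract group it is just \((\mathbb{Z}_{p^2})^{(\omega)}\oplus(\mathbb{Z}_{p})^{(\omega)}\), which is computably presentable. This is exactly where the cited construction does its work, and it is the only non-routine ingredient.

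Finally, assembling the pieces: fix such a group \(G\). By Lemma~\ref{lem:discretece} it has a right-c.e.\ Polish presentation, which is in particular a computable topological discrete abelian group. If this group were topologically isomorphic to some computable Polish group \(H\), then \(H\) would also be discrete, and Lemma~\ref{lem:discretecomp} would make \(G\cong H\) computably presentable, contradicting the choice of \(G\). Thus \(G\), equipped with its right-c.e.\ Polish structure, is the required counter-example. The main obstacle is the algebraic input cited above, which is genuinely delicate for the isomorphism-invariance reason just noted; everything on the side of presentations of spaces and groups is routine bookkeeping through the two lemmas.
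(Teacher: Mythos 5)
Your proof is correct and takes essentially the same route as the paper: fix a Khisamiev-type c.e.-presented abelian group with no computable presentation, endow it with the discrete right-c.e.\ metric via Lemma~\ref{lem:discretece} (so it is a computable topological, indeed right-c.e.\ Polish, group), and observe that a topological isomorphism onto a computable Polish group would force that group to be discrete, whence Lemma~\ref{lem:discretecomp} would yield a computable presentation, a contradiction. Your emphasis that the non-computability must reside in the isomorphism type (so a constant-exponent sum over a non-computable index set is useless) is exactly the right precaution and matches the intent of the paper's citation of Khisamiev-style examples.
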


\begin{proof}
Consider the group $G=\bigoplus_{i\in S}\mathbb{Z}_{p^{k}}$ where $S$ is a $\Sigma_{2}^{0}$ set that is not c.e.. Then $G$ is c.e.~presentable \cite{Khas,MelSurvey} with no computable presentation. By Lemma~\ref{lem:discretece} there is a discrete right-c.e.~metric $d$ on $G$ in which the group operations are effectively continuous. Since $d$ is right-c.e., $\tau_d$ is a computable topology on $G$ and so $(G,\tau_d)$ is a computable topological group. If $(G,\tau_d)$ is effectively compatible with the computable metric space $\mathcal{M}=(\{\alpha_i\}_{i\in\omega},d')$ then
$G=\{\alpha_i\}_{i\in\omega}$ since $d'$ is discrete, and since $\tau_d$ and $\tau_{d'}$ are effectively compatible, the group operations would be effectively continuous with respect to $\tau_{d'}$, which contradicts  Lemma \ref{lem:discretecomp}.
\end{proof}

\subsection{A profinite counterexample}
Recall that in Corollary~\ref{cor:properstuff} we produced a right-c.e.~proper Poliosh presentation which, by effective compatibility, was also effectively locally compact.
Can we produce a \emph{computable} (proper) metric, say, in the simplest compact case? Note that in the case of a compact Polish group we vacuously have a proper metric.
 We now prove that the answer is `no'.

\begin{proposition}\label{prop:profi}
There exists a profinite group $G$ that admits an effectively compact right-c.e.~Polish presentation but has no computably compact  (effectively compact computable Polish) presentation.
\end{proposition}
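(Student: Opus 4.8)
The plan is to exploit the Stone-type duality between profinite abelian groups and discrete torsion abelian groups, together with the effective version of this correspondence recorded in the cited literature, to transport a known separation result from the discrete abelian world into the profinite world. Concretely, I would start from a discrete torsion abelian group $A$ whose effective-algebra behaviour is already well understood: we want $A$ to be $\Sigma^0_1$-presentable (c.e.-presentable) but not computably presentable, and we additionally want the c.e.~presentation to be ``nice'' enough that its Pontryagin dual $\widehat{A}$ can be given an \emph{effectively compact} right-c.e.~Polish presentation. A natural candidate is again a group of the form $A=\bigoplus_{i\in S}\mathbb{Z}_{p}$ where $S$ is a suitably chosen set (e.g.~a $\Sigma^0_2$ set that is not $\Delta^0_2$, or a c.e.~set coding appropriate information); the point is to pick $S$ so that $A$ has a c.e.~presentation but no computable one, while the presentation is concrete enough that the dual, which is a product $\prod_{i\in S}\mathbb{Z}_p$ sitting inside $\mathbb{Z}_p^{\omega}$, inherits a $\Pi^0_1$-type description that yields effective compactness of the right-c.e.~Polish copy of $G=\widehat{A}$.

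The key steps, in order, would be: (1) Fix the discrete torsion abelian $A$ as above and invoke the results of \cite{Khas,MelSurvey} (as used in the proof of Corollary~\ref{cor:ce}) to guarantee $A$ is c.e.-presentable but not computably presentable. (2) Form $G=\widehat{A}$ and build an explicit right-c.e.~Polish presentation of $G$ as a closed subgroup of the computably compact group $\mathbb{Z}_p^{\omega}$ (or $(\mathbb{R}/\mathbb{Z})^\omega$-type ambient space as appropriate), arguing that $G$ is in fact \emph{effectively compact}: the finite basic open covers of $G$ can be listed because $G$ is a $\Pi^0_1$ closed subgroup of an effectively compact space and we have a c.e.~description of the defining conditions coming from the c.e.~presentation of $A$ — here one uses the standard fact (cf.~Remark~\ref{strange:remark} and Fact~\ref{lem:comsub}) that a $\Pi^0_1$ subset of an effectively compact space is effectively compact. (3) Establish the effective Pontryagin-duality bridge in the form we need: a \emph{computably compact} (= effectively compact computable Polish) presentation of $G$ would, via the effective dualities of \cite{Pontr} (profinite abelian $\leftrightarrow$ discrete torsion abelian) and \cite{lupini,EffedSurvey}, yield a \emph{computable} discrete presentation of $\widehat{G}=A$. (4) Conclude: since $A$ has no computable presentation, $G$ has no computably compact presentation, while by step (2) it does have an effectively compact right-c.e.~Polish one.

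The main obstacle I expect is step (3): making the ``computably compact dual $\Rightarrow$ computable discrete original'' implication genuinely rigorous. The cited papers \cite{Pontr,EffedSurvey} establish that recursive profinite abelian groups correspond exactly to computable discrete torsion abelian groups, and that ``recursively presented profinite'' coincides with ``computably compact''; one must check that this equivalence is effective in the right direction and applies to our $G$, i.e.~that an arbitrary computably compact presentation of $G$ (not just the one we cooked up) still lets us recover a computable copy of $A$ by reading off the lattice of clopen subgroups and their finite quotients. This amounts to verifying that the clopen subgroup lattice and the transition maps of the inverse system are uniformly computable from a computably compact presentation — which is exactly the content of the equivalences in \cite{EffedSurvey,lupini}, but needs to be cited and applied carefully since the right-c.e.~ambient case requires the extra $\Sigma^0_1$-closedness that distinguishes ``computably compact'' from ``effectively compact'' (cf.~Remark~\ref{strange:remark}). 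A secondary, more routine obstacle is checking in step (2) that the particular defining conditions for $G\subseteq\mathbb{Z}_p^\omega$ are co-c.e.~(so that $G$ is $\Pi^0_1$) and that the induced metric is right-c.e.~with the special points (finite $p$-adic vectors with support in an initial segment) forming a dense computable sequence; this is bookkeeping of the same flavour as the computations in the proof of Theorem~\ref{thm:effbk}.
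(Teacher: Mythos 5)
Your overall route---transferring the discrete separation (c.e.-presentable but not computably presentable) through Pontryagin duality, realizing the dual as an effectively compact object, and using the chain ``computably compact $\Leftrightarrow$ recursive in Smith's sense $\Leftrightarrow$ computable discrete dual'' for the negative direction---is exactly the strategy of the paper's proof, and your step (3) concern is handled there just as you propose, by citing \cite{EffedSurvey} and \cite{Pontr}. However, two of your concrete choices fail. First, the witness group: $\bigoplus_{i\in S}\mathbb{Z}_p$ with a single fixed prime depends, up to isomorphism, only on $|S|$, so for infinite $S$ it is computably presentable and separates nothing. The paper uses distinct primes, $A=\bigoplus_{i\in S}\mathbb{Z}_{p_i}$, so that computable presentability of $A$ is exactly c.e.-ness of $S$. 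Relatedly, your suggested choices of $S$ do not fit the construction on the profinite side: a c.e.\ $S$ makes $A$ computable (no counterexample), and a properly $\Sigma^0_2$ set $S$ gives no obvious $\Pi^0_1$ realization of $\widehat{A}=\prod_{i\in S}\mathbb{Z}_{p_i}$ inside the computably compact product. What is needed, and what the paper takes, is a co-c.e.\ non-c.e.\ set, namely $S=\overline{K}$ with $K$ the halting set; then $\{x\colon x_i=0 \text{ for all } i\notin S\}$ is indeed $\Pi^0_1$, hence effectively compact as a subset.

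Second, and this is the genuine gap, your step (2) is not bookkeeping. If you present $G$ as that $\Pi^0_1$ subgroup with the induced (in fact computable) metric, your proposed special points---finitely supported vectors with support in an initial segment---typically do \emph{not} lie in $G$, since they may be nonzero in coordinates outside $S$, and the ones that do lie in $G$ cannot be computably enumerated because $S$ is not c.e.; so you get no computable dense sequence and hence no right-c.e.\ Polish presentation at all (effective compactness of the subset, as in Remark~\ref{strange:remark}, does not supply points). The paper's construction resolves precisely this: it builds $G$ as an inverse limit with non-surjective bonding maps and, when $i$ enters $K$, it does not discard points but \emph{collapses} the relevant cosets of the killed summands by redefining the distances inside each coset to be $0$. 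This collapse is what makes the metric merely right-c.e., lets every standard special point of $2^{\omega}\cong\prod_i F_i$ survive as a (heavily repeated) special point of $G$, and forces the additional verification---flagged in the paper's remark about fast Cauchy sequences---that the group operations remain consistently defined modulo the collapse. Without this collapsing step your plan does not produce the effectively compact right-c.e.\ Polish presentation that the proposition asserts.
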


\begin{proof}
The proof that we outline below resembles similar  counter-examples in~\cite{Smith1}, \cite{Pontr}, and \cite{uptohom}. However, in our case a bit more care is needed.
%We need make sure that our example is a right-c.e.~Polish space upon which the operations are correctly and consistently defined, which is not immediate.
%We thus suppress some of the less important  (and rather elementary) details and emphasise the difficulties and how they are resolved.

\

We construct our group $G$  to be the inverse limit of finite groups and (injective) homomorphisms
$$F_0 \leftarrow_{\phi_0}  F_1 \leftarrow_{\phi_1} F_2 \leftarrow_{\phi_2} \ldots,$$
where the maps are not necessarily surjective, and the groups and the maps are given by their strong indices, i.e., as finite sets.
Let $H_i$ be the image of $\phi_i$. We have that $$G = \projlim_{i \in \omega}  (H_i, \phi_i).$$

We construct $H$ to be isomorphic to the direct product of cyclic groups $$G_S = \prod_{i \in S} \mathbb{Z}_{p_i},$$
where $S \subseteq \omega$.

\begin{lemma}
 If $G$  has an effectively compact,  computable Polish (eccp) presentation iff $S$ is c.e.
\end{lemma}

\begin{proof}[Proof of Lemma]
By \cite{EffedSurvey}, for $G$ to be eccp it is necessary and sufficient that $G$ has  `recursive' presentation in the sense of Smith~\cite{Smith1}. By
\cite{Pontr}, it is also equivalent to computable presentability of the discrete group $\bigoplus_{i \in S} \mathbb{Z}_{p_i}$, which is also evidently  equivalent to $S$ being c.e. \end{proof}

By the lemma above, it is sufficient to construct an effectively compact right-c.e.~Polish (ecrp) presentation of $G_{\overline{K}}$, where $K$ is the halting set.
A straightforward injury-free construction can be designed to implement the following idea.

\

 In the notation above, we make each $F_i$ to be the product of some of the $\mathbb{Z}_{p_j}$ -- which exactly depends on the construction.
Note we can change our mind about $H_i$ by making $\phi_i$ not onto. We can also delay this decision and make $\phi_j$ not onto for some $j>i$, with a similar effect to the projective limit. Simply put, whenever we introduce another cyclic summand, we then later can `kill' it if necessary, but we also would like to do it in the most natural way possible so that we do not upset the operations.

This is done as follows.
If $i$ enters $K$ at stage $s$, we make sure that $H_s \leq F_s$ is isomorphic to $\prod_{j<i; j \notin K_s} \mathbb{Z}_{p_j}$.
In other words, we pick $H_{s}$ to be the subgroup of $F_s$ of this form and define $\phi_s$ to be (essentially) the identity map that identifies  $F_{s+1}$ with $H_s$ inside $F_{s}$, which clearly can be done. We then re-introduce cyclic summands of orders $>i$ in $H_{s+2}$ and define the map $\phi_{s+1}$ so that respects the group operations, etc.

 We believe that the formal construction is so elementary that the  formal details can be safely left to the reader.
It shall be useful to view this process described above as follows. We declare that the subgroup of $F_s$ generated by the cyclic summands $\mathbb{Z}_{p_j}$, $j\geq i$,  is the kernel of the natural homomorphism $\psi_s$
taking $F_s$ to $F_{s+1} = \prod_{j<i; j \notin K_s} \mathbb{Z}_{p_j}$, where we additionally pick nice representatives of the classes in the factor. These `nice' representatives are the elements having zero $\mathbb{Z}_{p_j}$-projection    if $ \mathbb{Z}_{p_j}$ does not occur in  $F_{s+1}$.
We use these representatives to define $\phi_s$ as induced by  the natural isomorphism between the $F_{s+1}$ and $F_s / U_s $, where $U_s = Ker \, \psi_s$.

We now observe that $\prod_{i \in \omega} F_i$  is computably homeomorphic to $2^{\omega}$.
We essentially declare a basic clopen set in  $\prod_{i \in \omega} F_i$ to be `out' of the class representing the group if
the respective element is discovered to be outside the range of some $\phi_i$. In other words, the domain of the group can be viewed as a $\Pi^0_1$ class.
It is easy to see that a $\Pi^0_1$-calls can be right-c.e.~metrized; e.g., \cite{bastone}. (The special points will be just the special points of $2^{\omega}$, but they can be declared equal.) However, we also need to verify that the group operations are effective.

\begin{remark}\rm
If the reader is puzzled as to what could potentially be an issue, they should think about the following. One of the equivalent formulations of a computable map is that a fast Cauchy sequence should be mapped to a fast Cauchy sequence, uniformly effectively and consistently. However,  what could be \emph{not} fast converging  in $\prod_{i \in \omega} F_i$
can become fast converging after the `collapse' of the metric. We must be able to to still define the operations for such sequences even though we cannot predict what happens in the future.
However,  the `collapse' happens uniformly and symmetrically throughout the group in the construction, so we should be safe.
\end{remark}

The group operations are consistently defined and are total on the whole of $\prod_{i \in \omega} F_i$.
Also, the clopen sets that are declared `out' together with the elements that stay `in' make up finitely many cosets
of $F_s/U_s$,
$$a_0 + U_s, \ldots, a_k + U_s, $$
where $U_s = Ker \, \psi_s $ (as explained above) and the elements $a_k$ can be identified with the respective elements of $F_{s+1}$, under $\psi_i$ (equivalently, with their pre-images under $\phi_i$).
Collapse the whole coset $a_j +U_s$ into one point by declaring that the metric between any two points in the coset is equal to zero.
In $\prod_{i \leq s} F_i$, the operation is consistently defined modulo $U_s$. Thus, if we declare the (new distance between) the elements making up each of these cosets
equal to zero it will be consistent with the group operations.

It should be clear that the metric is right-c.e., and that we can define a computable sequence starting with the standard computable dense sequence in $$\prod_{i \in \omega} F_i \cong 2^{\omega}.$$ Of course, our dense sequence will have lots of repetitions.
The resulting $\Pi^0_1$ class is also effectively compact, as all $\Pi^0_1$ classes in $2^{\omega}$  are, since any cover remains a cover after the metric is being re-defined.

To calculate the operations, use the operations inherited from $\prod_{i \in \omega} F_i \cong 2^{\omega}$. As we argued above, the functionals act consistently with the group structure on the $\Pi^0_1$ class.
 \end{proof}

\bibliographystyle{plain}
\bibliography{topgroups}

\end{document}